\newtheorem{theorem}{Theorem}
\newtheorem{lemma}[theorem]{Lemma}
\newtheorem{proposition}[theorem]{Proposition}
\newtheorem{corollary}[theorem]{Corollary}
\newcommand{\R}{\mathbb{R}}
\newcommand{\eps}{\epsilon}
\newcommand{\EE}[1]{\mathbb{E}\left[{#1}\right]}
\newcommand{\Ep}[2]{\mathbb{E}_{{#1}}\left[{#2}\right]}
\newcommand{\PP}[1]{\mathbb{P}\left\{{#1}\right\}}
\newcommand{\Pp}[2]{\mathbb{P}_{{#1}}\!\left\{{#2}\right\}}
\newcommand{\One}[1]{{\mathbf{1}}\left\{{#1}\right\}}
\newcommand{\one}[1]{{\mathbf{1}}_{{#1}}}
\newcommand{\iidsim}{\stackrel{\textnormal{iid}}{\sim}}
\newcommand{\norm}[1]{\left\|{#1}\right\|}
\newcommand{\Pcal}{\mathcal{P}}
\newcommand{\Fcal}{\mathcal{F}}
\newcommand{\dkl}{\textnormal{d}_{\textnormal{KL}}}
\newcommand{\dw}{\textnormal{d}_{\textnormal{W}}}
\newcommand{\dH}{\textnormal{d}_{\textnormal{H}}}
\newcommand{\bA}{\mathbf{A}}
\newcommand{\Pemp}{\widehat{P}_n}
\DeclareMathOperator*{\argmax}{argmax}
\newcommand{\Leb}{\mathrm{Leb}}
\title{Local continuity of log-concave projection, with applications to estimation under model misspecification}
\author{Rina Foygel Barber\thanks{Department of Statistics, University of Chicago} \ and Richard J. Samworth\thanks{Statistical Laboratory, University of Cambridge}}
\begin{document}
\maketitle
\begin{abstract}
The log-concave projection is an operator that maps a $d$-dimensional distribution $P$ to 
an approximating log-concave density. It is known that, with suitable metrics on the underlying spaces, this projection is continuous, but not uniformly continuous.
In this work we prove a local uniform continuity result for  log-concave projection---in particular, establishing that this map
is locally H{\"o}lder-(1/4) continuous. A matching lower bound verifies that this exponent cannot be improved.
We also examine the implications of this continuity result for the empirical setting---given a sample drawn from a distribution $P$,
we bound the squared Hellinger distance between
the log-concave projection of the empirical distribution of the sample, and the log-concave projection of $P$. In particular, 
this yields interesting statistical results for the misspecified setting, where $P$ is not itself log-concave.
\end{abstract}

\section{Introduction}\label{sec:intro}

In nonparametric statistics and inference, many problems are formulated in terms of shape constraints.
Examples include isotonic regression and convex regression (for supervised learning problems, placing constraints on the shape of the regression
function relating the response to the covariates), and monotone or log-concave density estimation (for unsupervised learning problems, placing
constraints on a distribution that is the target we wish to estimate). 

Among these examples,
log-concave density estimation is especially challenging in that it cannot be formulated as an $L_2$-projection onto a convex constraint set.
Remarkably, projection onto the space of log-concave densities can still be uniquely defined, but unlike a convex
projection, this operation is not uniformly continuous \citep{dumbgen2011approximation} and its mathematical and statistical properties
are therefore difficult to analyze.
In this work, we examine the continuity properties of log-concave projection more closely to establish locally uniform convergence,
and study the statistical implications of these results.

\subsection{Background}
We begin by establishing some notation used throughout the paper, and then give background on  log-concave projection
and its known properties.

\subsubsection{Notation}
Throughout the paper, $\norm{\cdot}$ denotes the usual Euclidean norm. 
For a distribution $P$, we write $\Ep{P}{\cdot}$ and $\Pp{P}{\cdot}$ to denote expectation or probability taken with respect to a random variable or vector $X$ drawn
from distribution $P$, and $\mu_P := \Ep{P}{X}$ denotes its mean. We will analogously write $\Ep{f}{\cdot}$, $\Pp{f}{\cdot}$, and $\mu_f$ for a density $f$.  We say a distribution, density, or random vector is isotropic if it has zero mean and identity covariance matrix.  Given $x\in\R^d$ and $r > 0$, we write $\mathbb{B}_d(x,r) := \{y \in \mathbb{R}^d:\|y-x\| \leq r\}$ for the closed Euclidean ball of radius $r$ centered at $x$, $\mathbb{B}_d(r)=\mathbb{B}_d(0,r)$  for the closed Euclidean ball of radius $r$ centered at 
zero, and $\mathbb{S}_{d-1}(r) := \{y\in\mathbb{R}^d : \|y\|=r\}$ for the sphere of radius $r$ centered at zero. For the unit ball and unit sphere we write $\mathbb{B}_d=\mathbb{B}_d(1)$ and $\mathbb{S}_{d-1}=\mathbb{S}_{d-1}(1)$. For $x\in\R$, $(x)_+$ denotes $\max\{x,0\}$, and $(x)_-$ denotes $\max\{-x,0\}$.  For independent observations $X_1,\dots,X_n\in\R^d$, we will write $\Pemp$ to denote the empirical distribution.  We write $\Leb_d$ for Lebesgue measure on~$\mathbb{R}^d$.

The $L_1$-Wasserstein distance $\dw$ is
defined for two distributions $P,Q$ on $\R^d$ as 
\[\dw(P,Q) := \inf\left\{\Ep{\tilde{P}}{\norm{X-Y}} \ : \ \begin{tabular}{c}\textnormal{Distributions $\tilde{P}$ on $(X,Y)\in \R^d\times\R^d$}\\
\textnormal{such that marginally $X\sim P$ and $Y\sim Q$}\end{tabular}\right\} \in[0,+\infty].\]
For any distributions $P,Q$ on $\R^d$, this infimum is attained for some coupling $\tilde{P}$ \citep[][Theorem~4.1]{villani2008optimal}.  We will also use the Hellinger distance $\dH$, defined for densities $f,g$ on $\R^d$ as
\[\dH^2(f,g) := \int_{\R^d} \left(\sqrt{f(x)} - \sqrt{g(x)}\right)^2\;\mathsf{d}x.\]
The Hellinger distance is known to satisfy $0\leq \dH^2(f,g)\leq \min\{2, \dkl(f||g)\}$ for any densities $f,g$, where $\dkl(f||g) := \Ep{f}{\log \big(f(X)/g(X)\big)}$ is the Kullback--Leibler divergence. Both $\dw$ and $\dH$ satisfy the triangle inequality, while $\dkl$ does not.

\subsubsection{The log-concave projection}
\label{SubSec:LCP}

For any $d\in \mathbb{N}$, let $\Pcal_d$ denote the set of probability distributions $P$ on $\R^d$ satisfying $\Ep{P}{\norm{X}} < \infty$ and $\Pp{P}{X\in H} < 1$ for every hyperplane $H \subseteq \R^d$, that is, $P$ does not place all its mass in any hyperplane.  Further, let $\Fcal_d$ denote the set of all upper semi-continuous, log-concave densities on $\R^d$.  Then, by \citet[][Theorem~2.2]{dumbgen2011approximation}, there exists a well-defined projection $\psi^*:\Pcal_d \rightarrow \Fcal_d$, given by
\[
  \psi^*(P) := \argmax_{f \in \Fcal_d} \Ep{P}{\log f(X)}.
\]
When $P \in \Pcal_d$ has a (Lebesgue) density~$f_P$ satisfying $\Ep{f_P}{\bigl|\log f_P(X)\bigr|} < \infty$, we can see that $\psi^*(P)$ is the (unique) minimizer over $f\in\Fcal_d$ of the Kullback--Leibler divergence from $f_P$ to $f$---since the KL divergence acts as a sort
of distance, we can think of $f=\psi^*(P)$ as the ``closest'' log-concave density to $f_P$, which 
explains the use of the terminology `projection' to describe this map. In particular, if $f_P$ itself is log-concave, then $\psi^*(P)=f_P$.

To see the gain of defining $\psi^*$ more broadly (i.e., on all distributions $P\in\Pcal_d$, rather than only on distributions with
densities), consider the empirical setting, where $\Pemp$ is the empirical distribution of a sample. 
Then the result of \citet[][Theorem~2.2]{dumbgen2011approximation}  tells us that, provided the convex hull of the data is $d$-dimensional, there exists a unique log-concave maximum likelihood estimator.  We can therefore carry out log-concave density estimation via maximum likelihood in much the same way as if the class $\Fcal_d$ were a standard parametric model.  
To understand the estimation properties of this procedure, 
suppose we metrise $\Pcal_d$ with the $L_1$-Wasserstein distance $\dw$,
and metrise $\Fcal_d$ with the Hellinger distance $\dH$.
Then, by \citet[][Theorem~2.15]{dumbgen2011approximation}, the map $\psi^*$ is continuous. 
For the empirical distribution $\Pemp$ obtained by drawing a sample $X_1,\dots,X_n\iidsim P$, we therefore have
\[
  \dH\bigl(\psi^*(\Pemp),\psi^*(P)\bigr) \stackrel{\mathrm{a.s.}}{\rightarrow} 0.
\]
(This follows from the above continuity result because, 
 by Varadarajan's theorem \citep[][Theorem~11.4.1]{dudley2002real} and the strong law of large numbers, 
 it holds that $d_{\mathrm{W}}(\Pemp,P) \stackrel{\mathrm{a.s.}}{\rightarrow} 0$.)
Thus, if $P \in \Pcal_d$ has a log-concave density, then the log-concave maximum likelihood estimator is strongly consistent---and moreover, even if the log-concavity is misspecified, then the estimator $\psi^*(\Pemp)$ still converges to the log-concave projection $\psi^*(P)$ of $P$.  In this sense, then, the log-concave maximum likelihood estimator converges to the closest element of $\Fcal_d$ to $P$, so can be regarded as robust to misspecification.

Despite these positive results establishing continuity and consistency of $\psi^*$, however, the situation appears much less promising when it comes to  obtaining rates of convergence (e.g., via a Lipschitz-type property of the map).  Indeed, 
we cannot hope for Lipschitz continuity of this map, since 
the review article by \citet{samworth2018recent} gives the following example to show that $\psi^*$ is not even uniformly continuous: let $P^{(n)} = \textnormal{Unif}[-1/n,1/n]$ and $Q^{(n)} = \textnormal{Unif}[-1/n^2,1/n^2]$.  Then $d_{\mathrm{W}}(P^{(n)},Q^{(n)}) \rightarrow 0$, but since $P^{(n)}$ and $Q^{(n)}$ have log-concave densities $f^{(n)} := \frac{n}{2}\one{[-1/n,1/n]}$ and $g^{(n)} := \frac{n^2}{2}\one{[-1/n^2,1/n^2]}$ respectively, we deduce that 
\begin{equation}
\label{Eq:UCCounter}
\dH\bigl(\psi^*(P^{(n)}),\psi^*(Q^{(n)})\bigr)=\dH\bigl(f^{(n)},g^{(n)}\bigr) \nrightarrow 0.
\end{equation}

\paragraph{Summary of contributions} While we have seen that  log-concave projection
does not satisfy uniform continuity, a natural question is whether it may be possible to place further restrictions on the class $\Pcal_d$ to obtain 
a result of this type.  Moreover, from the statistical point of view, we would like
to find a uniform rate of convergence for $\dH\bigl(\psi^*(\Pemp),\psi^*(P)\bigr)$, where $\Pemp$ is the empirical distribution of a sample of size $n$ drawn from $P \in \Pcal_d$, which again might require stronger assumptions than simply $P\in\Pcal_d$.

The first main result of this paper (Theorem~\ref{thm:main}) reveals that the metric space map $\psi^*:(\Pcal_d,d_{\mathrm{W}}) \rightarrow (\Fcal_d,\dH)$ is \emph{locally} H\"older-(1/4) continuous, which establishes a precise understanding of the continuity properties of  log-concave projection.
Theorem~\ref{thm:lowerbd} establishes a matching lower bound, revealing that the exponent $1/4$ cannot be improved.
Next, we specialise to the empirical setting, proving a bound on $\Ep{P}{\dH^2\bigl(\psi^*(\Pemp),\psi^*(P)\bigr)}$ in  Theorem~\ref{thm:main_empirical}.
For $d \geq 2$, this result is a straightforward consequence
of combining our main result in Theorem~\ref{thm:main} with the recent work of \citet{lei2020convergence}, which bounds $\dw(\Pemp,P)$ in expectation, while the case $d=1$ requires a completely different approach.  To the best of our knowledge, this work provides the first understanding of the range of possible rates of convergence of the log-concave maximum likelihood estimator in the misspecified setting.

\subsection{Outline of paper} The remainder of the paper is organized as follows. 
In Section~\ref{sec:mainresults} we present our main results, establishing
the local H{\"o}lder continuity of  log-concave projection, and examining the empirical setting, as described above.
We review prior work on  log-concave projection and related problems in Section~\ref{Sec:PriorWork}.
The proofs of our main results are presented in Section~\ref{sec:proofs}, with technical details deferred to Appendix~\ref{app:proofs}.

\section{Main results}\label{sec:mainresults}
As mentioned in Section~\ref{sec:intro}, \citet[Theorem 2.15]{dumbgen2011approximation} show that the log-concave projection operator 
$\psi^*$
satisfies continuity with respect to appropriate metrics:
\begin{equation}\label{eqn:DSS_2.15}
\textnormal{
The log-concave projection $\psi^*:(\Pcal_d,\dw)\rightarrow(\Fcal_d,\dH)$
is a continuous map.}
\end{equation}
Our main results examine the continuity of the log-concave projection operator $\psi^*$ more closely,
and establish local uniform continuity results.
To do this, we first introduce, for any distribution $P$ on $\R^d$ with $\Ep{P}{\norm{X}}<\infty$, the quantity
\[
  \eps_P :=  \inf_{u\in\mathbb{S}_{d-1}} \Ep{P}{ \big|u^\top(X-\mu_P)\big|}.
\]
The quantity $\eps_P$ can be thought of as a robust analogue of the minimum eigenvalue of the covariance matrix of the distribution $P$ (note that its definition does not require $P$ to have a finite second moment).  
We can also interpret $\eps_P$ as measuring the extent to which $P$ avoids placing all its mass on a single hyperplane.

First, we verify that $\eps_P$ is positive for all $P\in\Pcal_d$, and is Lipschitz with respect to the Wasserstein distance.
\begin{proposition}\label{prop:eps_P}
We have $\eps_P>0$ for any $P\in\Pcal_d$. Furthermore, $|\eps_P-\eps_Q|\leq2\dw(P,Q)$ for any distributions $P,Q$ on $\R^d$ with $\Ep{P}{\norm{X}},\Ep{Q}{\norm{X}}<\infty$.
\end{proposition}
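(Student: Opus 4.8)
The plan is to treat the two assertions separately. For positivity, I would first record that the map $u\mapsto\Ep{P}{\,|u^\top(X-\mu_P)|\,}$ is finite and Lipschitz on $\mathbb{S}_{d-1}$: for $u_1,u_2\in\mathbb{S}_{d-1}$ we have $\bigl|\,|u_1^\top(X-\mu_P)|-|u_2^\top(X-\mu_P)|\,\bigr|\le\norm{u_1-u_2}\,\norm{X-\mu_P}$ pointwise, and $\Ep{P}{\norm{X-\mu_P}}\le\Ep{P}{\norm{X}}+\norm{\mu_P}<\infty$ since $P\in\Pcal_d$. Consequently the infimum defining $\eps_P$ is attained at some $u^\ast\in\mathbb{S}_{d-1}$ by compactness of the sphere. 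If $\eps_P$ were zero, then $\Ep{P}{\,|u^{\ast\top}(X-\mu_P)|\,}=0$, so $u^{\ast\top}X=u^{\ast\top}\mu_P$ holds $P$-almost surely; that is, $P$ places all of its mass on the hyperplane $H=\{x\in\R^d:u^{\ast\top}x=u^{\ast\top}\mu_P\}$, contradicting $P\in\Pcal_d$. Hence $\eps_P>0$.

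For the Lipschitz bound, we may assume $\dw(P,Q)<\infty$, as the inequality is vacuous otherwise. Let $\tilde P$ be an optimal coupling of $(X,Y)$ with $X\sim P$, $Y\sim Q$ and $\Ep{\tilde P}{\norm{X-Y}}=\dw(P,Q)$, which exists by the attainment result cited above. By Jensen's inequality, $\norm{\mu_P-\mu_Q}=\norm{\Ep{\tilde P}{X-Y}}\le\Ep{\tilde P}{\norm{X-Y}}=\dw(P,Q)$. Now fix $u\in\mathbb{S}_{d-1}$ and decompose $X-\mu_P=(Y-\mu_Q)+(X-Y)+(\mu_Q-\mu_P)$; applying the triangle inequality for $|\cdot|$ together with $|u^\top v|\le\norm{v}$ for unit $u$ gives
\[
\Ep{P}{|u^\top(X-\mu_P)|}\le\Ep{Q}{|u^\top(Y-\mu_Q)|}+\Ep{\tilde P}{\norm{X-Y}}+\norm{\mu_Q-\mu_P}\le\Ep{Q}{|u^\top(Y-\mu_Q)|}+2\dw(P,Q).
\]
Taking the infimum over $u\in\mathbb{S}_{d-1}$ on both sides yields $\eps_P\le\eps_Q+2\dw(P,Q)$, and exchanging the roles of $P$ and $Q$ gives the reverse inequality, so $|\eps_P-\eps_Q|\le2\dw(P,Q)$.

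The argument is essentially routine, and I do not anticipate a genuine obstacle; the only point that warrants a little care is justifying that the infimum in the definition of $\eps_P$ is actually attained, so that the contradiction with the no-hyperplane condition can be drawn cleanly. That is handled by the Lipschitz continuity of $u\mapsto\Ep{P}{|u^\top(X-\mu_P)|}$ on the compact sphere $\mathbb{S}_{d-1}$, and everything else follows from the triangle inequality and an application of Jensen's inequality to an optimal Wasserstein coupling.
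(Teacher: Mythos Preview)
Your proposal is correct and follows essentially the same approach as the paper: both establish continuity of $u\mapsto\Ep{P}{|u^\top(X-\mu_P)|}$ to guarantee the infimum is attained (yielding the positivity claim via the no-hyperplane condition), and both use an optimal Wasserstein coupling together with the triangle inequality and the bound $\norm{\mu_P-\mu_Q}\le\dw(P,Q)$ for the Lipschitz part. The only cosmetic difference is that the paper fixes the minimizer $u_0$ for $\eps_P$ and bounds $\eps_Q-\eps_P$ directly, whereas you prove the inequality uniformly in $u$ and then take the infimum; these are equivalent.
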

We now present our first main result, which shows that $\eps_P$ allows for a more detailed analysis of the continuity of the map $\psi^*$.  
\begin{theorem}\label{thm:main}
For any $d \geq 1$ and $P,Q\in\Pcal_d$,
\[\dH\bigl(\psi^*(P),\psi^*(Q)\bigr)\leq   C_d\cdot \left[\frac{\dw(P,Q)}{\max\{\eps_P,\eps_Q\}}\right]^{1/4},\]
where $C_d > 0$ depends only on $d$.
\end{theorem}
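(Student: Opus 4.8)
The plan is to reduce the general statement to a rescaled, near-isotropic situation and then chain two kinds of estimates: a quantitative log-concave-projection stability bound against the \emph{input} distribution (available through the KL-based characterization of $\psi^*$ and the continuity machinery of \citet{dumbgen2011approximation}), and a direct control of $\dH\bigl(\psi^*(P),\psi^*(Q)\bigr)$ by the closeness of $P$ and $Q$ themselves. The first simplification is an affine-equivariance reduction: since $\psi^*$ commutes with invertible affine maps (the log-concave MLE transforms covariantly, and Hellinger distance and $\dw$ transform in controlled ways under such maps), I may assume after an affine change of variables that, say, $\eps_P \asymp 1$; this is where the factor $\max\{\eps_P,\eps_Q\}$ enters, and Proposition~\ref{prop:eps_P} guarantees that $\eps_P$ and $\eps_Q$ are comparable once $\dw(P,Q) \le \tfrac12\max\{\eps_P,\eps_Q\}$, while the complementary regime $\dw(P,Q) > \tfrac12 \max\{\eps_P,\eps_Q\}$ is handled trivially since $\dH \le \sqrt 2$ and the claimed bound exceeds a constant there.

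Next I would invoke the structural consequences of $\eps_P$ being bounded below: a lower bound on $\eps_P$ forces $\psi^*(P)$ to have a mode value bounded above and a peak that is not too concentrated, and—crucially—yields uniform tail bounds and a uniform lower bound on $\psi^*(P)$ on a fixed-size ball, via the known properties of log-concave densities with controlled ``spread'' (this is the kind of estimate appearing in \citet{dumbgen2011approximation} and the log-concave MLE literature, e.g. exponential tail bounds for $\psi^*(P)$ depending only on $\mu_P$ and a dispersion parameter like $\eps_P$). These uniform bounds are what convert weak convergence / Wasserstein closeness into Hellinger closeness: on a compact set the two densities are bounded above and below, so $\dH^2$ is comparable to an $L_1$ or $L_\infty$ discrepancy there, and outside the compact set both densities contribute only a small, uniformly controlled amount.

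The heart of the argument is then a perturbation estimate: writing $g = \psi^*(P)$ and $h = \psi^*(Q)$, use the variational (first-order optimality) characterization of each projection—namely $\Ep{P}{\log g(X)} \ge \Ep{P}{\log h(X)}$ and $\Ep{Q}{\log h(X)} \ge \Ep{Q}{\log g(X)}$—to obtain
\[
\dkl(g \,\|\, h) + \dkl(h \,\|\, g) \;\le\; \bigl(\Ep{P}{\log\tfrac{g(X)}{h(X)}} - \Ep{Q}{\log\tfrac{g(X)}{h(X)}}\bigr) + (\text{similar term}),
\]
so the left side (which dominates $\dH^2(g,h)$ up to constants, at least once we know $g/h$ is bounded) is controlled by $\bigl|\Ep{P}{\phi} - \Ep{Q}{\phi}\bigr|$ for $\phi = \log(g/h)$. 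If $\phi$ were Lipschitz with a controlled constant, Kantorovich--Rubinstein would bound this by $L_\phi \cdot \dw(P,Q)$ and we would be done with exponent $1$; the reason the true exponent is only $1/4$ is that $\log g$ and $\log h$ are only locally Lipschitz with constants that blow up on the tails (a log-concave density can decay like $e^{-c\|x\|}$ but also can have arbitrarily steep affine pieces near the boundary of its support). So the real work is a truncation/regularization trade-off: restrict attention to a ball $\mathbb{B}_d(R)$ on which $\log g,\log h$ have Lipschitz constant $O(R)$ (using the tail and lower bounds from the previous step), pay an error $e^{-cR}$ for the tails and for the event that the optimal coupling moves mass out of the ball, bound the in-ball contribution by $R \cdot \dw(P,Q)$, and optimize over $R$. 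Balancing $R\,\dw(P,Q)$ against the tail terms, together with the square root lost in passing from KL-type quantities to $\dH$, is what produces the $\dw^{1/4}$ rate.

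The step I expect to be the main obstacle is making the truncation argument fully uniform and quantitative: one must show that the Lipschitz constant of $\log \psi^*(P)$ on $\mathbb{B}_d(R)$, the lower bound for $\psi^*(P)$ on $\mathbb{B}_d(R)$, and the mass $\psi^*(P)$ places outside $\mathbb{B}_d(R)$ are all controlled by $\eps_P$ (and the trivial normalization $\mu_P = 0$) with no hidden dependence on finer features of $P$—this is precisely the place where the counterexample with $\mathrm{Unif}[-1/n,1/n]$ versus $\mathrm{Unif}[-1/n^2,1/n^2]$ would break a naive bound, and it is why the $\eps_P$ normalization is indispensable. A secondary technical point is handling the coupling carefully: the optimal coupling $\tilde P$ for $\dw(P,Q)$ may move a little mass a long way, so one needs to split the expectation $\Ep{\tilde P}{\phi(X)-\phi(Y)}$ according to whether both $X,Y$ lie in $\mathbb{B}_d(R)$, using a crude linear-growth bound $|\phi| \lesssim R$ on the complementary events together with a Markov-type bound on $\tilde P\{X \notin \mathbb{B}_d(R)\}$ coming from $\Ep{P}{\|X\|} \lesssim 1$.
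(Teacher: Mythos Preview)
Your high-level strategy is close to the paper's: both approaches use the variational characterization of $\psi^*$ to reduce $\dH^2$ to a difference of expectations $\Ep{P}{\phi}-\Ep{Q}{\phi}$ for a suitable $\phi$, then invoke Kantorovich--Rubinstein once $\phi$ is made Lipschitz, and finally optimize a regularization parameter to balance the Lipschitz cost against the approximation error. Where your proposal goes wrong is the specific regularization.

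You propose to restrict to a ball $\mathbb{B}_d(R)$ and claim that $\log g$, $\log h$ have Lipschitz constant $O(R)$ there. This is false in general, and the failure is exactly the boundary phenomenon you yourself flag. If $P$ has compact support (e.g.\ $P=\textnormal{Unif}(\mathbb{B}_d)$), then $g=\psi^*(P)$ has support equal to the convex hull of $\mathrm{supp}(P)$, and $\log g=-\infty$ on the rest of $\mathbb{B}_d(R)$; even on the interior of the support, $\log g$ can have arbitrarily large gradient near the boundary (e.g.\ $g(x)\propto(1-|x|)^k$ on $[-1,1]$ has $(\log g)'\to-\infty$ at the boundary for any $k\ge1$, while $\eps_P$ stays of order one). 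So there is no uniform bound on the Lipschitz constant of $\log g$ on $\mathbb{B}_d(R)$ in terms of $\eps_P$ and $R$. Relatedly, $\Ep{Q}{\log g(X)}$ can be $-\infty$ whenever $Q$ places any mass outside $\mathrm{supp}(g)$, so your KL inequality $\dkl(g\|h)+\dkl(h\|g)\le\Ep{P}{\log(g/h)}-\Ep{Q}{\log(g/h)}$, while formally correct, degenerates to $+\infty\le+\infty$ in precisely the cases that matter.

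The paper's fix is not to truncate spatially but to regularize $\phi=\log g$ itself via the $L$-Lipschitz majorization (inf-convolution)
\[
\phi^L(x)=\sup_{y}\bigl\{\phi(y)-L\|x-y\|\bigr\},
\]
which is globally $L$-Lipschitz and finite by construction, so Kantorovich--Rubinstein applies with no truncation. The nontrivial content is then the approximation estimate $\int e^{\phi^L}\le 1+\tfrac{4d}{Lb_dr_d\eps_P}$, proved by writing the excess integral as an integral over super-level sets $D_t=\{\phi\ge t\}$ and bounding the volume of their $\delta$-neighborhoods via Steiner's formula together with the fact (from the lower bound on $g$ over a ball of radius $\asymp\eps_P$) that each $D_t$ contains a ball of that radius. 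This gives $\dH^2\lesssim \tfrac{1}{L\eps_P}+L\,\dw(P,Q)$, and optimizing $L$ yields $\dH^2\lesssim\sqrt{\dw(P,Q)/\eps_P}$, i.e.\ the $1/4$ exponent for $\dH$. Your balancing heuristic (tail $e^{-cR}$ versus $R\cdot\dw$) would, if it worked, give $\dH^2\lesssim\dw\cdot\log(1/\dw)$ and hence exponent $1/2$, which is too strong and is another symptom that the claimed Lipschitz control is not available.
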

\noindent This upper bound immediately implies the continuity result~\eqref{eqn:DSS_2.15}, but more importantly, to the best of our knowledge, this is the first general, quantitative statement about the local continuity of log-concave projection.   Another consequence is that, when $d=1$, the uniform continuity counterexample in~\eqref{Eq:UCCounter} is in some sense canonical: if $(P^{(n)})$ and $(Q^{(n)})$ are sequences in $\Pcal_1$ satisfying $\dw(P^{(n)},Q^{(n)}) \rightarrow 0$ and $\liminf_{n \rightarrow \infty} \max\{\eps_{P^{(n)}},\eps_{Q^{(n)}}\} > 0$, then $\dH\bigl(\psi^*(P^{(n)}),\psi^*(Q^{(n)})\bigr) \rightarrow 0$.

\subsection{Extension to affine transformations}
By \citet[][Remark~2.4]{dumbgen2011approximation}, log-concave projection commutes with affine transformations; i.e., if $\psi^*(P)=f$ then $\psi^*(\bA\circ P) = \bA\circ f$ for any invertible
matrix $\bA$,
where $\bA\circ P$ denotes the distribution obtained by drawing $X\sim P$ and returning $\bA X$,
and similarly $\bA\circ f$ denotes the density of the random variable obtained by drawing $X$ according to density $f$ and returning $\bA X$.

Turning to the terms appearing in Theorem~\ref{thm:main}, 
the Hellinger distance is invariant to affine transformations, but the terms on the right-hand side---namely, $\dw(P,Q)$
and $\max\{\eps_P,\eps_Q\}$---are not. By considering affine transformations,
we obtain the following corollary to Theorem~\ref{thm:main}, which we state without further proof:
\begin{corollary}\label{cor:affine}
For any $d \geq 1$ and $P,Q\in\Pcal_d$,
\[\dH\bigl(\psi^*(P),\psi^*(Q)\bigr)\leq C_d\cdot  \inf_{\bA\in\R^{d\times d}, \textnormal{rank}(\bA)=d}\left[\frac{\dw(\bA\circ P,\bA\circ Q)}{\max\{\eps_{\bA\circ P},\eps_{\bA\circ Q}\}}\right]^{1/4},\]
where $C_d > 0$ depends only on $d$.
\end{corollary}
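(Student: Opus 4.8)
\textbf{Proof proposal for Corollary~\ref{cor:affine}.}
The plan is to deduce the corollary from Theorem~\ref{thm:main} by applying that result not to the pair $(P,Q)$ but to its image under an arbitrary invertible linear map, and then transporting the conclusion back. First I would fix any $\bA\in\R^{d\times d}$ with $\textnormal{rank}(\bA)=d$ and check that $\bA\circ P,\bA\circ Q\in\Pcal_d$: the first-moment condition holds since $\Ep{\bA\circ P}{\norm{X}}=\Ep{P}{\norm{\bA X}}<\infty$ (and likewise for $Q$), while the no-hyperplane condition is preserved because, $\bA$ being invertible, the preimage $\bA^{-1}(H)$ of a hyperplane $H$ is again a hyperplane, so $\Pp{\bA\circ P}{X\in H}=\Pp{P}{X\in\bA^{-1}(H)}<1$. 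Hence Theorem~\ref{thm:main} applies to $(\bA\circ P,\bA\circ Q)$ and gives
\[
\dH\bigl(\psi^*(\bA\circ P),\psi^*(\bA\circ Q)\bigr)\leq C_d\left[\frac{\dw(\bA\circ P,\bA\circ Q)}{\max\{\eps_{\bA\circ P},\eps_{\bA\circ Q}\}}\right]^{1/4}.
\]

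Next I would rewrite the left-hand side so that it no longer depends on $\bA$. By \citet[][Remark~2.4]{dumbgen2011approximation}, $\psi^*$ commutes with invertible affine maps, so $\psi^*(\bA\circ P)=\bA\circ\psi^*(P)$ and $\psi^*(\bA\circ Q)=\bA\circ\psi^*(Q)$. It then suffices to observe that the Hellinger distance is invariant under the density pushforward by an invertible linear map: if $g=\bA\circ f$ then $g(y)=|\det\bA|^{-1}f(\bA^{-1}y)$, and the substitution $x=\bA^{-1}y$ in the integral defining $\dH^2$ makes the Jacobian $|\det\bA|$ cancel the two factors of $|\det\bA|^{-1}$, so that $\dH(\bA\circ f,\bA\circ f')=\dH(f,f')$ for any densities $f,f'$. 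Combining these two facts, $\dH\bigl(\psi^*(\bA\circ P),\psi^*(\bA\circ Q)\bigr)=\dH\bigl(\psi^*(P),\psi^*(Q)\bigr)$, so the displayed inequality becomes a bound on $\dH\bigl(\psi^*(P),\psi^*(Q)\bigr)$ itself, with the same constant $C_d$, valid for every invertible $\bA$.

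Finally, since the bound holds for each rank-$d$ matrix $\bA$ while its left-hand side is independent of $\bA$, I would take the infimum over all such $\bA$ on the right-hand side to obtain the claimed inequality; taking $\bA$ equal to the identity shows the infimum is over a nonempty set and recovers Theorem~\ref{thm:main}, so the corollary is a genuine strengthening. There is no substantive obstacle here—the only points requiring any care are the verification that invertible affine images of distributions in $\Pcal_d$ remain in $\Pcal_d$ and the affine invariance of $\dH$, both of which are routine—which is why the statement is given in the text without further proof.
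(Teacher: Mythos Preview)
Your proposal is correct and is exactly the argument the paper has in mind: the text states the corollary ``without further proof'' immediately after noting that $\psi^*$ commutes with invertible affine transformations and that $\dH$ is affine-invariant while $\dw$ and $\eps_{(\cdot)}$ are not. Your verification that $\bA\circ P,\bA\circ Q\in\Pcal_d$ and the change-of-variables check for $\dH$ fill in precisely the routine details the paper omits.
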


\subsection{A matching lower bound}
To see that our main result in Theorem~\ref{thm:main} is optimal in terms of its dependence on the Wasserstein distance $\dw(P,Q)$ and on
the terms $\eps_P,\eps_Q$, we now construct 
an explicit example to provide a matching lower bound. 
\begin{theorem}\label{thm:lowerbd}
Fix any $d \geq 1$, $\eps>0$, and $\delta>0$. Then there exist distributions $P,Q\in\Pcal_d$ with $\eps_P,\eps_Q\geq \eps$ and $\dw(P,Q)\leq \delta$,
such that
\[\dH\bigl(\psi^*(P),\psi^*(Q)\bigr)\geq c_d\cdot \min\bigl\{1, \big(\delta/\eps\big)^{1/4}\bigr\},\]
where $c_d>0$ depends only on dimension $d$. 
\end{theorem}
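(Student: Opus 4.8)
The plan is to construct an explicit pair of one-dimensional distributions realising the claimed separation, and then lift the construction to $\R^d$ by taking products with a fixed isotropic log-concave density in the remaining $d-1$ coordinates (this is legitimate because $\psi^*$ and $\dH$ tensorise appropriately, and $\eps_P$ for a product is controlled by the one-dimensional factor). So the heart of the matter is the case $d=1$. Here the natural candidates are scaled uniform distributions, mirroring the counterexample~\eqref{Eq:UCCounter}: take $P = \mathrm{Unif}[-a,a]$ and $Q = \mathrm{Unif}[-b,b]$ for suitable $0 < b < a$. Both have log-concave densities, so $\psi^*(P)=f_P$ and $\psi^*(Q)=f_Q$, and one computes directly that $\eps_P = a/2$, $\eps_Q = b/2$, $\dw(P,Q) = (a-b)/2$, and $\dH^2(f_P,f_Q) = 2\bigl(1-\sqrt{b/a}\,\bigr)$.

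The next step is to choose $a,b$ as functions of the target parameters $\eps,\delta$ so that the constraints $\eps_P,\eps_Q\ge\eps$ and $\dw(P,Q)\le\delta$ hold while $\dH$ is as large as the stated bound. If $\delta/\eps$ is bounded below (say $\ge 1$), we just pick $b = 2\eps$ and $a$ much larger, forcing $\dH^2$ close to its maximum of $2$, which gives the constant-order lower bound $c_d$. If $\delta/\eps$ is small, set $b = 2\eps$ and $a = 2\eps + 2\delta$; then $\eps_Q = \eps$, $\eps_P = \eps+\delta\ge\eps$, $\dw(P,Q)=\delta$, and $\dH^2(f_P,f_Q) = 2\bigl(1 - \sqrt{\tfrac{\eps}{\eps+\delta}}\bigr) = 2\bigl(1 - (1+\delta/\eps)^{-1/2}\bigr) \asymp \delta/\eps$ for small $\delta/\eps$. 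This yields $\dH(f_P,f_Q)\asymp (\delta/\eps)^{1/2}$, which is actually \emph{larger} than the claimed $(\delta/\eps)^{1/4}$; since we only need a lower bound of order $(\delta/\eps)^{1/4}$, this is more than enough once $\delta/\eps\le 1$ (as then $(\delta/\eps)^{1/2}\ge (\delta/\eps)^{1/2}$ and we can even weaken to match the stated form). Combining the two regimes gives $\dH(\psi^*(P),\psi^*(Q))\ge c\cdot\min\{1,(\delta/\eps)^{1/2}\}\ge c\cdot\min\{1,(\delta/\eps)^{1/4}\}$ after adjusting the constant, since $\min\{1,t^{1/2}\}\ge\min\{1,t^{1/4}\}$ fails—so in fact I should phrase the conclusion as $\dH\ge c\cdot\min\{1,(\delta/\eps)^{1/4}\}$ directly, which follows because when $\delta/\eps\le 1$ we have $(\delta/\eps)^{1/2}\ge(\delta/\eps)^{1/2}$ and $(\delta/\eps)^{1/2}\ge \tfrac12(\delta/\eps)^{1/4}$ is false; the correct observation is simply that $(\delta/\eps)^{1/2} = \bigl((\delta/\eps)^{1/4}\bigr)^2 \le (\delta/\eps)^{1/4}$ when $\delta/\eps\le1$, so the uniform example only proves the weaker bound $(\delta/\eps)^{1/2}$ which is \emph{smaller}, not larger — meaning the uniform construction is in fact insufficient and a different example (with a genuinely non-log-concave intermediate behaviour) is needed.

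Re-examining, the right construction should instead use distributions whose \emph{log-concave projections differ substantially even though the distributions are Wasserstein-close}: the gap must come from the projection operation itself, not from the distributions already being log-concave. A promising choice is to take $Q$ to be a log-concave density (e.g.\ uniform on $[-b,b]$ rescaled to have $\eps_Q\asymp\eps$) and $P$ to be $Q$ with a small, sharp spike of mass added near a point, at distance $O(\delta)$ in Wasserstein from $Q$; the log-concave projection $\psi^*(P)$ is then forced to ``inflate'' near the spike, and controlling $\dH(\psi^*(P), Q)$ from below reduces to analysing how much a localised perturbation of the log-likelihood functional moves the maximiser. The main obstacle, and the bulk of the work, will be this lower bound on $\dH(\psi^*(P),\psi^*(Q))$: unlike the upper-bound direction, here we cannot appeal to a variational inequality in the easy direction, so we will need to either (i) exhibit an explicit test function or event on which the two projected densities are quantifiably far apart, or (ii) use the characterisation of $\psi^*(P)$ via its support function / tent-pole structure together with the first-order optimality conditions from \citet{dumbgen2011approximation} to show the projection cannot track $Q$ too closely. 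I expect to carry this out by a careful but elementary direct computation in $d=1$ with a piecewise-specified family, optimising the free scale parameters at the end to extract precisely the exponent $1/4$, and then tensorising to general $d$ as above.
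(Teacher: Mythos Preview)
Your diagnosis that the scaled-uniform pair fails is correct: it yields $\dH\asymp(\delta/\eps)^{1/2}$, which is \emph{smaller} than $(\delta/\eps)^{1/4}$ in the regime $\delta/\eps\le 1$, and your pivot toward a ``spike'' perturbation of a log-concave $Q$ is the right instinct. The paper does essentially this, but directly in $\R^d$: it takes $Q=\textnormal{Unif}\bigl(\mathbb{S}_{d-1}(\rho)\bigr)$ with $\rho\asymp\eps$ (so $\psi^*(Q)$ is uniform on $\mathbb{B}_d(\rho)$), and $P=(1-\beta)\,\textnormal{Unif}\bigl(\mathbb{S}_{d-1}(\rho)\bigr)+\beta\,\textnormal{Unif}\bigl(\mathbb{S}_{d-1}(2\rho)\bigr)$ with $\beta\asymp\delta/\eps$. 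This avoids tensorising altogether --- a step you asserted but did not justify, and which is not automatic: $\psi^*(P_1\otimes P_2)=\psi^*(P_1)\otimes\psi^*(P_2)$ is not a general fact, so you would owe a separate argument there.

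The genuine gap in your proposal is the step you flagged yourself as ``the bulk of the work'': the quantitative lower bound on $\dH\bigl(\psi^*(P),\psi^*(Q)\bigr)$. You gesture at first-order optimality conditions, but the actual mechanism is sharper and does not use the 1D tent-pole characterisation. The paper's key tool is Lemma~\ref{lem:phi_with_radius_bound}, which is itself a consequence of the Lipschitz-majorization bound (Lemma~\ref{lem:Lip_maj}): if $\phi=\phi^*(P)$ and $t_*:=M_\phi-\phi(x)$ at the spike location $\|x\|=2\rho$, then comparing $\phi$ with its $(t_*/4\rho)$-Lipschitz majorization and using optimality of $\phi$ forces
\[
\beta \;\le\; \Pp{P}{M_\phi-\phi(X)\ge t_*}\;\lesssim\;\frac{\rho}{\eps_P\,t_*^2},
\]
so that $t_*\lesssim 1/\sqrt{\beta}$. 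Concavity then gives $\phi\ge M_\phi-1$ on a shell of width $\asymp\rho/t_*\asymp\rho\sqrt\beta$ just outside $\mathbb{B}_d(\rho)$, and since $e^{M_\phi}\asymp\rho^{-d}$ this shell carries mass $\asymp\sqrt\beta\asymp(\delta/\eps)^{1/2}$ for $\psi^*(P)$, all of it disjoint from the support of $\psi^*(Q)$. That is precisely $\dH^2\gtrsim(\delta/\eps)^{1/2}$. Without this inversion of the Lipschitz-majorization estimate (or an equivalent device) your argument has no way to control how far $\psi^*(P)$ is dragged by the spike, and the exponent $1/4$ will not emerge.
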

The theorem will be proved using the following construction:
Let $P\in\Pcal_d$ be the uniform distribution on the sphere $\mathbb{S}_{d-1}(\rho)$, where $\rho\propto \eps$,
and let $Q\in\Pcal_d$ be the mixture distribution that, with probability $\beta\propto \delta/\eps$, draws uniformly from $\mathbb{S}_{d-1}(2\rho)$, and with probability $1-\beta$ draws uniformly from $\mathbb{S}_{d-1}(\rho)$.  Then $\dw(P,Q) = \rho\beta\propto \delta$, and we will see that $\dH\bigl(\psi^*(P),\psi^*(Q)\bigr) \propto \big(\delta/\eps\big)^{1/4}$, as desired.

\subsection{Bounds for empirical processes}

Now let $X_1,\dots,X_n\iidsim P \in \mathcal{P}_d$, with corresponding empirical distribution function $\Pemp$.  Under an additional moment assumption on $P$, we consider the problem of bounding $\dH^2\bigl(\psi^*(\Pemp),\psi^*(P)\bigr)$. However, to be fully precise, we need
to consider the possibility that $\psi^*(\Pemp)$ may not be defined---specifically,
if $P$ places positive probability on some hyperplane $H\subseteq\R^d$, then it is possible that the empirical distribution $\Pemp$ 
may place all its mass on this hyperplane, in which case we have $\Pemp\not\in\Pcal_d$ and $\psi^*(\Pemp)$ is not defined.
In a slight abuse of notation, for such a case
we will interpret $\dH^2\bigl(\psi^*(\Pemp),\psi^*(P)\bigr)$ as the maximum possible squared Hellinger distance (i.e., $2$).

\begin{theorem}\label{thm:main_empirical}
Fix any $P\in\Pcal_d$, and assume that
\[\Ep{P}{\norm{X}^q}^{1/q}\leq M_q\]
for some $q > 1$.  Let $X_1,\dots,X_n\iidsim P$ for some $n\geq 2$, and let $\Pemp$ denote the corresponding empirical distribution.
Then
\[\EE{\dH^2\bigl(\psi^*(\Pemp),\psi^*(P)\bigr)} \leq C_{d,q}\cdot \sqrt{\frac{M_q}{\eps_P}} \cdot \frac{\log^{3/2} n}{n^{\min\left\{\frac{1}{2d},\frac{1}{2}-\frac{1}{2q}\right\}}},\]
where $C_{d,q} > 0$ depends only on $d$ and $q$.
\end{theorem}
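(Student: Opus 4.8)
The plan is to combine the deterministic bound of Theorem~\ref{thm:main} with a control of $\dw(\Pemp,P)$ in expectation, handling the two regimes $d\geq 2$ and $d=1$ separately. For $d\geq 2$, the strategy is direct: by Theorem~\ref{thm:main} we have $\dH^2(\psi^*(\Pemp),\psi^*(P))\leq C_d^2\,[\dw(\Pemp,P)/\eps_P]^{1/2}$ on the event $\{\Pemp\in\Pcal_d\}$, and on the complementary event we use the trivial bound $\dH^2\leq 2$. Taking expectations and applying Jensen's inequality (concavity of $t\mapsto t^{1/2}$) reduces the problem to bounding $\EE{\dw(\Pemp,P)}$. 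Here I would invoke the recent result of \citet{lei2020convergence}, which under the moment assumption $\Ep{P}{\norm{X}^q}^{1/q}\leq M_q$ gives a bound of order $M_q\cdot n^{-\min\{1/d,\,1-1/q\}}$ up to logarithmic factors (the $\log n$ factor being what ultimately produces the $\log^{3/2}n$ in the statement after taking the square root). One must also argue that $\PP{\Pemp\notin\Pcal_d}$ is negligible—this event requires all $n$ sample points to lie on a common hyperplane, which (when $P$ itself does not concentrate on a hyperplane) has probability decaying geometrically in $n$, hence is dominated by the main term; and when $P$ does put mass on a hyperplane but is still in $\Pcal_d$, a more careful argument via a net of hyperplanes or a VC-type bound shows the same. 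Assembling these pieces yields the claimed rate with the exponent $1/(2d)$ coming from $\tfrac14\cdot\tfrac{2}{d}\wedge\ldots$—more precisely $\min\{1/d,1-1/q\}$ halved by the Hölder-$(1/4)$ exponent applied to $\dw$ and then square-rooted again by Jensen, giving $\min\{1/(2d),\,1/2-1/(2q)\}$.

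For $d=1$, the Wasserstein rate $\EE{\dw(\Pemp,P)}\asymp n^{-1/2}$ (under a second moment) would, after the Hölder-$(1/4)$ step and Jensen, only give an exponent of $1/8$, which is worse than the stated $\min\{1/2,\,1/2-1/(2q)\}=1/2-1/(2q)$ in the one-dimensional case. So the excerpt explicitly flags that $d=1$ "requires a completely different approach." The idea I would pursue is to exploit the special structure of log-concave projection on the line: in one dimension one has sharper tools, in particular the explicit characterisation of $\psi^*(P)$ via the distribution function and the fact (going back to \citet{dumbgen2011approximation}) that the log-concave MLE/projection is determined by a finite-dimensional optimisation tied to the order statistics. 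Rather than routing through $\dw$, I would bound $\dH^2(\psi^*(\Pemp),\psi^*(P))$ more directly—for instance by controlling $\sup_x|\widehat F_n(x)-F_P(x)|$ (Kolmogorov--Smirnov, which is $O_P(n^{-1/2})$ regardless of $P$) together with a quantitative stability estimate for $\psi^*$ in terms of a uniform-in-$x$ perturbation of the CDF, refined on the region where $\psi^*(P)$ carries most of its mass. The moment parameter $q$ enters through truncation: outside a ball of radius $\sim n^{\alpha}$, log-concavity of $\psi^*(P)$ gives exponential tail decay, so the truncation error in the Hellinger distance is controlled, and the $M_q$ moment bound limits how much empirical mass strays into the tails; optimising the truncation radius against these two effects produces the $n^{-(1/2-1/(2q))}$ rate and the $\sqrt{M_q/\eps_P}$ prefactor (with $\eps_P$ appearing via the local-continuity scaling, since $\eps_P$ sets the natural length scale of $\psi^*(P)$).

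The main obstacle I anticipate is precisely the $d=1$ case: establishing a stability bound for $\psi^*$ under perturbations of the CDF in supremum norm that is strong enough to convert the $n^{-1/2}$ KS rate into a Hellinger bound of the right order, while simultaneously tracking the dependence on $\eps_P$ and on the moment parameter $q$ through the truncation argument. This needs a genuinely one-dimensional analysis of the log-concave projection—likely re-deriving or adapting the continuity machinery behind Theorem~\ref{thm:main} but with the Wasserstein input replaced by a weighted sup-norm input—and making sure no hidden dependence on $n$ (beyond the stated $\log^{3/2}n$) creeps in. A secondary, more routine obstacle in both regimes is bounding the probability and cost of the degenerate event $\{\Pemp\notin\Pcal_d\}$ uniformly over $P\in\Pcal_d$; I expect this to be handled by a union bound over a suitable finite collection of hyperplane-neighbourhoods, using that $\eps_P>0$ (Proposition~\ref{prop:eps_P}) quantifies how far $P$ is from being degenerate.
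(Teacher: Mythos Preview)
For $d\geq 2$ your plan is essentially the paper's: Theorem~\ref{thm:main} plus Jensen plus Lei's bound on $\EE{\dw(\Pemp,P)}$. One simplification you should adopt concerns the degenerate event $\{\Pemp\notin\Pcal_d\}$, which needs no separate probability estimate. On that event $\eps_{\Pemp}=0$, so Proposition~\ref{prop:eps_P} gives $2\dw(\Pemp,P)\geq|\eps_{\Pemp}-\eps_P|=\eps_P$, whence $\sqrt{\dw(\Pemp,P)/\eps_P}\geq 1/\sqrt{2}$ and the trivial bound $\dH^2\leq 2$ is already dominated by a constant times $\sqrt{\dw(\Pemp,P)/\eps_P}$. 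This yields a single deterministic inequality valid in both cases, after which you take expectations once; no net of hyperplanes or VC argument is needed.

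For $d=1$ your instincts point in the right direction (a CDF-based stability estimate plus truncation), but the plan is missing the specific tool that makes the exponent come out right. The paper does \emph{not} route through the Kolmogorov--Smirnov statistic $\sup_x|\widehat F_n(x)-F(x)|$; instead it introduces
\[
\Delta_{\textnormal{CDF}}(P,Q)=\max\Bigl\{\sup_t\bigl|\sqrt{\Pp{P}{X>t}}-\sqrt{\Pp{Q}{X>t}}\bigr|,\ \sup_t\bigl|\sqrt{\Pp{P}{X<t}}-\sqrt{\Pp{Q}{X<t}}\bigr|\Bigr\},
\]
and proves a deterministic one-dimensional stability lemma
\[
\dH^2\bigl(\psi^*(P),\psi^*(Q)\bigr)\leq C\sqrt{\tfrac{M_q}{\max\{\eps_P,\eps_Q\}}}\,\bigl\{\Delta_{\textnormal{CDF}}(P,Q)\log\bigl(e/\Delta_{\textnormal{CDF}}(P,Q)\bigr)\bigr\}^{1-1/q}.
\]
The square root on the CDF is exactly the ``weighted sup-norm'' you allude to at the end: it is what makes the level-set integral $\int\bigl(\Pp{Q}{\phi^L_P(X)\leq M-t}-\Pp{P}{\phi^L_P(X)\leq M-t}\bigr)\,\mathsf{d}t$ converge at the right rate, since a difference of tail probabilities is bounded by $\Delta_{\textnormal{CDF}}$ times the \emph{square root} of the probability (whereas raw KS only gives a constant bound, which is too crude in the tails). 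The empirical control $\Delta_{\textnormal{CDF}}(\Pemp,P)=O_P(\sqrt{\log n/n})$ then follows, after a quantile transform to $\textnormal{Unif}[0,1]$, from classical normalized empirical process inequalities on $\sup_{t}|\widehat U_n(t)-t|/\sqrt{t}$ (Shorack--Wellner). The stability lemma itself is proved by re-running the Lipschitz-majorization machinery behind Theorem~\ref{thm:main} (Corollary~\ref{cor:Lip_maj}) first on a bounded interval $[-R,R]$ and then extending via truncation with $R$ optimized---so your truncation idea is correct, but the CDF-perturbation input must be the square-root version rather than raw KS for the argument to close.
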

\begin{proof}[Proof of Theorem~\ref{thm:main_empirical}]
First we consider the case $d\geq 2$. The result will follow by combining the bound~\eqref{eqn:dH_dW_may_be_degenerate}, obtained from Theorem~\ref{thm:main}, together
with a bound on the expected Wasserstein distance between $\Pemp$ and $P$ \citep{lei2020convergence}.
Specifically, \citet[Theorem 3.1]{lei2020convergence} establishes that\footnote{In fact, \citet[Theorem 3.1]{lei2020convergence} shows that the $\log^2 n$ term may be reduced to $(\log n)\one{\{d=1,q=2\}} + (\log n)\one{\{d=2,q>2\}} + (\log^2 n)\one{\{d=2,q=2\}} + (\log n)\one{\{d\geq 3,q = d/(d-1)\}}$.  Since poly-logarithmic factors are not our primary concern in this work, however, we will present simpler bounds based on~\eqref{eqn:dw_Lei}.}
\begin{equation}\label{eqn:dw_Lei}\EE{\dw(\Pemp,P)}\leq \tilde{C}_q M_q\cdot \frac{\log^2 n}{ n^{\min\left\{\frac{1}{2},\frac{1}{d},1-\frac{1}{q}\right\}}}\end{equation}
for some $\tilde{C}_q > 0$ depending only on $q$.  
Furthermore,  on the event that $\Pemp\in\Pcal_d$ (i.e., $\Pemp$ does not place
all its mass in any hyperplane), then by applying Theorem~\ref{thm:main} with $Q = \Pemp$ we have
\[\dH^2\bigl(\psi^*(\Pemp),\psi^*(P)\bigr) \leq C_d^2\cdot \frac{\dw^{1/2}(\Pemp,P)}{\max\{\eps_P^{1/2},\eps_{\Pemp}^{1/2}\}}.\]
If instead $\Pemp$ does place all its mass in a hyperplane and so $\psi^*(\Pemp)$ is undefined, 
then in this case we have  $\eps_{\Pemp}=0$, and so by Proposition~\ref{prop:eps_P},
$2\dw(\Pemp,P) \geq |\eps_{\Pemp}-\eps_P| = \eps_P$.
Recalling from above that we interpret $\dH^2\bigl(\psi^*(\Pemp),\psi^*(P)\bigr)$ as equal to $2$ in the case where $\Pemp\not\in \Pcal_d$,
we can see that in either case, it holds that
\begin{equation}\label{eqn:dH_dW_may_be_degenerate}\dH^2\bigl(\psi^*(\Pemp),\psi^*(P)\bigr) \leq \max\left\{C_d^2,\sqrt{8}\right\}\cdot \frac{\dw^{1/2}(\Pemp,P)}{\eps_P^{1/2}}.\end{equation}

Now, taking the expected value and combining the bounds~\eqref{eqn:dw_Lei} and~\eqref{eqn:dH_dW_may_be_degenerate}, we obtain
\begin{multline}\EE{\dH^2\bigl(\psi^*(\Pemp),\psi^*(P)\bigr)}\leq   \EE{\max\left\{C_d^2,\sqrt{8}\right\}\cdot \frac{\dw^{1/2}(\Pemp,P)}{\eps_P^{1/2}}}\\ \leq \max\left\{C_d^2,\sqrt{8}\right\}\cdot \left[\frac{\EE{\dw(\Pemp,P)}}{\eps_P}\right]^{1/2} \nonumber \leq \max\left\{C_d^2,\sqrt{8}\right\}\sqrt{\tilde{C}_q} \cdot \sqrt{\frac{M_q}{\eps_P}} \cdot \frac{\log n}{n^{\min\left\{\frac{1}{4},\frac{1}{2d},\frac{1}{2}-\frac{1}{2q}\right\}}}.\end{multline}
Choosing $C_{d,q} = \max\left\{C_d^2,\sqrt{8}\right\}\cdot \sqrt{\tilde{C}_q}$, this proves the desired result for the case $d\geq 2$.

For the case $d=1$, the result cannot be proved with the same argument, as the exponent on $n$ in the bound above is at best $1/4$, which does not lead to the desired scaling if $q>2$. 
We establish the desired bound for  $d=1$  in Section~\ref{sec:proof_1d}, using a more technical argument.
\end{proof}

We remark that, if $X$ is additionally assumed to be subexponential, then \citet[Corollary 5.2]{lei2020convergence}
establishes exponential tail bounds for $\dw(\Pemp,P)$; under this stronger assumption, the results of Theorem~\ref{thm:main_empirical}
could then be strengthened to give a tail bound for $\dH^2\bigl(\psi^*(\Pemp),\psi^*(P)\bigr)$, in place of the bound on expected
value.

\subsubsection{Lower bounds for the empirical setting}
Our final main result studies the optimality of the power of $n$ appearing in Theorem~\ref{thm:main_empirical}. 
\begin{theorem}\label{thm:lowerbd_empirical}
For any $d \geq 1$ and $q>1$, there exist $\eps^*_d,c_d>0$, depending only on $d$, such
that
\[\sup_{P\in\Pcal_d:\Ep{P}{\norm{X}^q}\leq 1,\, \eps_P\geq \eps^*_d} \EE{\dH^2(\psi^*(\Pemp),\psi^*(P))}\geq c_d \cdot n^{-\min\left\{\frac{2}{d+1},\frac{1}{2}-\frac{1}{2q}\right\}}.\]
\end{theorem}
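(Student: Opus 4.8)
The plan is to prove the two terms in the exponent $-\min\{\tfrac{2}{d+1},\tfrac12-\tfrac{1}{2q}\}$ via two separate constructions, each playing against a different mechanism by which $\psi^*(\Pemp)$ can fail to track $\psi^*(P)$. The first construction, giving the $n^{-2/(d+1)}$ term, should be a rescaled version of the sphere/mixture example already introduced for Theorem~\ref{thm:lowerbd}: take $P$ to be (a smoothed version of) the uniform distribution on $\mathbb{S}_{d-1}(\rho)$ with $\rho$ a fixed constant (chosen so that $\eps_P \geq \eps^*_d$ and $\Ep{P}{\norm{X}^q}\leq 1$), whose log-concave projection is supported on $\mathbb{B}_d(\rho)$ with a genuinely $d$-dimensional ``bump''. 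The point is that $\Pemp$, being supported on $n$ points near the sphere, looks locally like a perturbation of $P$ at Wasserstein scale $\dw(\Pemp,P)\asymp n^{-1/d}$ (the rate at which $n$ points equidistribute on a $(d-1)$-dimensional sphere, measured against the uniform law), and the lower bound in Theorem~\ref{thm:lowerbd} — specialized with $\delta\asymp n^{-1/d}$, $\eps\asymp 1$ — then gives $\dH\bigl(\psi^*(\Pemp),\psi^*(P)\bigr)\gtrsim (n^{-1/d})^{1/4} = n^{-1/(4d)}$, hence $\dH^2\gtrsim n^{-1/(2d)}$; this matches the upper bound's $n^{-1/(2d)}$ but not yet $n^{-2/(d+1)}$. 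To get the sharper $n^{-2/(d+1)}$ one must argue more carefully that the \emph{typical} empirical measure on the sphere is farther from $P$ than the worst-case deterministic perturbation suggests, or equivalently exploit that the relevant fluctuation of $\psi^*(\Pemp)$ near the boundary of its support scales like $n^{-1/(d+1)}$ rather than $n^{-1/d}$ in a stronger metric — this is exactly the known $n^{-2/(d+1)}$ minimax rate for (well-specified, uniform-type) log-concave estimation near a flat boundary, and I would derive it by a direct Hellinger computation on the explicit projections rather than by invoking Theorem~\ref{thm:lowerbd} as a black box.

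The second construction, giving the $n^{-(1/2 - 1/(2q))}$ term, should be the one-dimensional phenomenon driven by heavy tails, padded out to $\R^d$ by taking a product with a fixed isotropic log-concave law in the remaining $d-1$ coordinates (using that $\psi^*$ and $\dH$ both factor over products and that $\eps_P$ is controlled coordinatewise). In one dimension, the idea is to let $P$ have a density that is log-concave on a bounded region but has a polynomial tail just barely compatible with the $q$th moment bound — concretely $f_P(x)\propto (1+|x|)^{-(q+1+\eta)}$ truncated or smoothed so that $\Ep{P}{|X|^q}\leq 1$ and $\eps_P$ is a fixed constant. Here $\psi^*(P)$ is some fixed log-concave density with lighter-than-$f_P$ tails, but the sample $X_1,\dots,X_n$ will, with constant probability, contain an extreme observation $\max_i|X_i|\asymp n^{1/q}$; this single point drags the empirical mean, and more importantly the log-likelihood functional $f\mapsto \Ep{\Pemp}{\log f(X)}$, enough to shift $\psi^*(\Pemp)$ away from $\psi^*(P)$ by a Hellinger amount that one quantifies. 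The cleanest route is: (i) show $\dw(\Pemp,P)\gtrsim n^{1/q}/n = n^{-(1-1/q)}$ in expectation (contribution of the maximal order statistic) — or rather, show that with constant probability $\Pemp$ differs from $P$ in a way that forces a Wasserstein-type or likelihood-type gap of this order; (ii) translate this into a Hellinger lower bound on the projections using a quantitative \emph{lower} version of the stability of $\psi^*$, or more robustly, by exhibiting directly that any log-concave $f$ within small Hellinger distance of $\psi^*(P)$ must assign the sample a strictly smaller likelihood than some competitor, contradicting optimality of $\psi^*(\Pemp)$.

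**The main obstacle**, and the place I expect to spend the most effort, is step (ii) of the second construction — and more generally, obtaining Hellinger \emph{lower} bounds on $\dH\bigl(\psi^*(\Pemp),\psi^*(P)\bigr)$ from ``$\Pemp$ is far from $P$'' type statements. Theorem~\ref{thm:main} is an \emph{upper} bound and Theorem~\ref{thm:lowerbd} gives a lower bound only for one specific adversarial pair, so neither directly yields a lower bound for the random $\Pemp$ arising from a heavy-tailed $P$. The honest way through is variational: use that $\psi^*(\Pemp)=\argmax_{f\in\Fcal_d}\Ep{\Pemp}{\log f(X)}$, pick an explicit perturbation $f_t$ of $\psi^*(P)$ indexed by a small parameter $t$ (e.g.\ widening a tail or shifting mass toward the outlier), compute $\frac{\mathsf{d}}{\mathsf{d}t}\big|_{t=0}\Ep{\Pemp}{\log f_t(X)}$ and show it is positive with constant probability of order $\asymp$ (outlier contribution), so the maximizer $\psi^*(\Pemp)$ lies at $t$ bounded away from $0$; then relate this $t$ to $\dH(f_t,\psi^*(P))^2\asymp t^2$ and push the lower bound through. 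Controlling the curvature (second derivative) of $t\mapsto\Ep{\Pemp}{\log f_t(X)}$ uniformly enough to conclude that the argmax is genuinely displaced — rather than merely that the derivative at $0$ is positive — is the delicate analytic point, and it will require the kind of local modulus-of-continuity estimates on $\psi^*$ that underpin Theorem~\ref{thm:main}, now run in reverse. Throughout, I would cross-check that the exponents produced by the two constructions are exactly $n^{-2/(d+1)}$ and $n^{-(1/2-1/(2q))}$, confirming that when $q$ is large the sphere construction dominates (for $d\geq 3$, since $\tfrac{2}{d+1}\leq \tfrac12$), while for small $q$ the heavy-tail construction dominates, so that the maximum of the two matches the stated bound for all $d\geq 1$ and $q>1$.
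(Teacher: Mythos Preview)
Your proposal is more complicated than necessary, and the obstacle you identify in step~(ii) can be avoided entirely by a different construction.

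For the first term $n^{-2/(d+1)}$: your plan of sharpening the sphere computation from $n^{-1/(2d)}$ to $n^{-2/(d+1)}$ via a ``direct Hellinger computation on the explicit projections'' is underdeveloped, and it is not clear how you would recover the correct exponent this way. The paper does something much simpler: it cites the known minimax lower bound for log-concave density estimation in the \emph{well-specified} case \citep[Theorem~1]{kim2016global}, which already gives rate $n^{-2/(d+1)}$ over log-concave $P$ satisfying the moment and $\eps_P$ constraints. Since this is a lower bound for \emph{any} estimator, it applies in particular to $\psi^*(\Pemp)$.

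For the second term $n^{-(1/2-1/(2q))}$: your heavy-tailed construction and variational argument are the wrong route. The paper instead reuses the mixture from Theorem~\ref{thm:lowerbd} in $\R^d$ directly (no product needed), with $\rho_0=1/2$, $\rho_1 = n^{1/q}$, and outlier weight $\beta = 1/(2n)$. The key observation is that with probability at least $(1-1/(2n))^n \geq 1/2$, \emph{none} of the $n$ observations land on the outer sphere, so $\Pemp$ is supported on $\mathbb{B}_d(1/2)$ and hence so is $\psi^*(\Pemp)$. The Hellinger lower bound then reduces to showing that $\psi^*(P)$ places mass $\gtrsim n^{-(1/2-1/(2q))}$ just outside $\mathbb{B}_d(1/2)$ --- and this is exactly the calculation~\eqref{eqn:lowerbd_construction_dH_step} already done for Theorem~\ref{thm:lowerbd}, which bounds $\dH^2(f,\psi^*(P))$ from below for \emph{any} density $f$ supported on the inner ball. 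This sidesteps your ``main obstacle'' completely: you never need a quantitative lower-stability result for $\psi^*$, because on the relevant event $\psi^*(\Pemp)$ has deterministically small support.
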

Ignoring a logarithmic factor in $n$, the first term, namely $n^{-\frac{2}{d+1}}$, is the known minimax rate for {\em any} estimator under
 the well-specified case where $P$ is itself log-concave, for any $d\geq 2$ \citep{kim2016global,kur2019log}.
The second term is a new result and will be proved via a misspecified construction where $P$ is not log-concave: the distribution is given by $X = R\cdot U$, where $U$ is drawn uniformly from the unit sphere $\mathbb{S}_{d-1}$, while the radius $R$ is drawn independently with
\[R = \begin{cases}1/2, & \textnormal{ with probability $1-1/2n$},\\ n^{1/q}, & \textnormal{ with probability $1/2n$.}\end{cases}\]
The intuition is that, with positive probability, the empirical distribution $\Pemp$ (and, therefore, its log-concave projection $\psi^*(\Pemp)$), is supported on the ball of radius $1/2$; on the other hand, we will see in the proof that $\psi^*(P)$ places $\sim n^{-\frac{1}{2}+\frac{1}{2q}}$ mass outside this ball,
leading to a lower bound on the Hellinger distance between these two log-concave projections.

A consequence of this last result
in dimension $d=1$ is that rates of convergence in log-concave density estimation can be much slower in the misspecified setting, 
with a minimax rate of $n^{-1/2}$ at best, as compared to the well-specified setting when $P$ is assumed to have a log-concave 
density, where the corresponding rate is $n^{-4/5}$ \citep{kim2016global}.

\subsubsection{A gap for dimension $d\geq 2$}
Comparing the lower bound established in Theorem~\ref{thm:lowerbd_empirical} with the upper bound given in Theorem~\ref{thm:main_empirical}, we see that for the case $d=1$ the
two bounds match, as they both scale as $n^{-\frac{1}{2}+\frac{1}{2q}}$ (ignoring poly-logarithmic factors).
For $d\geq 2$, however, there is a gap---for sufficiently large $q$ (i.e., a sufficiently strong moment condition), the upper bound scales
as $n^{-\frac{1}{2d}}$ (up to poly-logarithmic factors) while the lower bound has the faster rate $n^{-\frac{2}{d+1}}$. We also remark that the optimal dependence of the minimax rate on $d$ remains unknown as well.  

\section{Relationship with prior work}
\label{Sec:PriorWork}

Log-concave density estimation is a central problem within the field of nonparametric inference under shape constraints.  Entry points to the field include the book by \citet{groeneboom2014nonparametric}, as well as the 2018 special issue of the journal \emph{Statistical Science} \citep{samworth2018special}.  Other important shape-constrained problems that could benefit from the perspective taken in this work include decreasing density estimation \citep{grenander1956theory,rao1969estimation,groeneboom1985estimating,birge1989grenader,jankowski2014convergence}, isotonic regression \citep{brunk1972statistical,zhang2002risk,chatterjee2015risk,durot2018limit,bellec2018sharp,yang2019contraction,han2019isotonic} and convex regression \citep{hildreth1954point,seijo2011nonparametric,cai2015framework,guntuboyina2015global,han2016multivariate,fang2019risk}, among many others.  In these cases, the analysis is likely to be more straightforward, since the canonical least squares/maximum likelihood estimator can be characterised as an $L_2$-projection onto a convex set.  By contrast, the class $\Fcal_d$ is not convex, and the Kullback--Leibler projection $\psi^*$ is considerably more involved.

Early work on log-concave density estimation includes \citet{walther2002detecting}, \citet{pal2007estimating}, \citet{dumbgen2009maximum}, \citet{walther2009inference}, \citet{cule2010maximum}, \citet{cule2010theoretical}, \citet{schuhmacher2011multivariate}, \citet{samworth2012independent} and \citet{chen2013smoothed}.  Sometimes, the class is considered as a special case of the class of $s$-concave densities \citep{koenker2010quasi,seregin2010nonparametric,han2016approximation,doss2016global,han2019global}.  For the case of correct model specification, where~$P$ has density $f_P \in \Fcal_d$ and $\widehat{f}_n := \psi^*(\Pemp)$, it is now known \citep{kim2016global,kur2019log} that
\[
  \sup_{f_P \in \Fcal_d} \EE{\dH^2(\widehat{f}_n,f_P)} \leq K_d \cdot \left\{ \begin{array}{ll} n^{-4/5} & \mbox{when $d=1$} \\
                                                                                             n^{-2/(d+1)}\log n & \mbox{when $d \geq 2$,} \end{array} \right.
\]
where $K_d > 0$ depends only on $d$, and that this risk bound is minimax optimal (up to the logarithmic factor when $d \geq 2$).  See also \citet{carpenter2018near} for an earlier result in the case $d \geq 4$, and \citet{xu2019high} for an alternative approach to high-dimensional log-concave density estimation that seeks to evade the curse of dimensionality in the additional presence of symmetry constraints.  It is further known that when $d \leq 3$, the log-concave maximum likelihood estimator can adapt to certain subclasses of log-concave densities, including log-concave densities whose logarithms are piecewise affine \citep{kim2018adaptation,feng2018adaptation}.  Although these recent works provide a relatively complete picture of the behaviour of the log-concave maximum likelihood estimator when the true distribution has a log-concave density, there is almost no prior work on risk bounds under model misspecification.  The only exception of which we are aware is \citet[][Theorem~1]{kim2018adaptation}, which considers a univariate case where the true distribution has a density that is very close to log-affine on its support.
 
One feature that distinguishes our contributions from earlier work on rates of convergence in log-concave density estimation in the correctly specified setting is that our arguments avoid entirely notions of bracketing entropy, as well as empirical process arguments that control the behaviour of $M$-estimators in terms of the entropy of a relevant function class \citep[e.g.][]{van1996weak,van2000applications}.  It turns out that, for non-convex classes of densities, these ideas are not well suited to the misspecified setting.\footnote{See \citet[][Proposition~4.1]{patilea2001convex} for applications of entropy methods to studying rates of convergence of maximum likelihood estimators for \emph{convex} classes of densities. However, the class of densities $f$ that are log-concave is not a convex class; if we instead consider the class of concave log-densities (i.e., $\log f$, where $f$ is a log-concave density), then 
this class is also not convex, because of the need for the exponentials of these log-densities to integrate to~1.}  Instead, our main tool is a detailed and delicate analysis of the Lipschitz approximations to concave functions introduced in \citet{dumbgen2011approximation}.  In their original usage, these were employed in conjunction with asymptotic results such as Skorokhod's representation theorem to derive the consistency and robustness results described above.  By contrast, our analysis facilitates the direct inequality established in Theorem~\ref{thm:main}.

Another role of this work is to advocate for the benefits of regarding an estimator as a function of the empirical distribution, as opposed to the more conventional view where it is seen as a function on the sample space.  The empirical distribution $\Pemp$ of a sample $X_1,\ldots,X_n$ encodes all of the information in the data when we regard it as a multi-set $\{X_1,\ldots,X_n\}$, i.e.~when we discard information in the ordering of the indices.  It follows that any statistic $\widehat{\theta}_n = \widehat{\theta}_n(X_1,\ldots,X_n)$ that is invariant to permutation of its arguments can be thought of as a functional $\theta(\Pemp)$ of the empirical distribution.  Frequently, the definition of $\theta$ can be extended to a more general class of distributions $\Pcal$, and we may regard $\theta$ as a \emph{projection} from $\Pcal$ onto a model, or parameter space,~$\Theta$.  This perspective, which was pioneered by Richard von Mises in the 1940s \citep{mises1947asymptotic} and described in \citet[][Chapter~6]{serfling1980approximation}, offers many advantages to the statistician.  In particular, once the analytical properties (e.g.~continuity, differentiability) of~$\theta$ are understood, key statistical properties of the estimator (consistency, robustness to misspecification, rates of convergence), can often be deduced as simple corollaries of basic facts about the convergence of empirical distributions.

\section{Proofs of upper bounds}\label{sec:proofs}

In this section we prove Theorem~\ref{thm:main} (for arbitrary dimension $d$), and complete the proof of Theorem~\ref{thm:main_empirical} (for the
remaining case of dimension $d=1$).
In Section~\ref{sec:proof_background} we review some known properties of  log-concave projection, and in Section~\ref{sec:proof_state_Lip_lemma} we establish
a key lemma that will be used in both proofs. In Section~\ref{sec:proof_thm:main} we complete the proof of Theorem~\ref{thm:main},
and in Section~\ref{sec:proof_1d} we complete the proof of Theorem~\ref{thm:main_empirical} for the remaining case $d=1$.

\subsection{Background on  log-concave projection}\label{sec:proof_background}
We begin by reviewing some known properties of  log-concave projection, and computing some new bounds.
\subsubsection{Moment inequalities}
The log-concave projection $\psi^*$ is known to satisfy a useful convex ordering property \citep[Eqn.~(3)]{dumbgen2011approximation}:
for any $P\in\Pcal_d$ and for $f=\psi^*(P)$,\begin{equation}\label{eqn:convexorder}
\Ep{f}{h(X)}\leq\Ep{P}{h(X)}\textnormal{
for any convex function $h:\R^d\rightarrow(-\infty,\infty]$.}
\end{equation}
 In particular, this implies that
\[\Ep{f}{|v^\top (X-\mu_P)|} \leq \Ep{P}{|v^\top (X-\mu_P)|}\textnormal{ for all $v\in\R^d$}.\]

The following lemma establishes that, up to a constant, this inequality is tight for  all vectors $v\in\R^d$.
\begin{lemma}\label{lem:1stmoment}
Fix any $P\in\Pcal_d$, and let $f=\psi^*(P)$. Then 
\[\Ep{f}{|v^\top (X-\mu_P)|} \geq c_d \cdot \Ep{P}{|v^\top (X-\mu_P)|}\textnormal{ for all $v\in\R^d$},\]
where $c_d \in (0,1]$ depends only on $d$.
\end{lemma}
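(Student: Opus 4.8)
The plan is to reduce the statement to a one-dimensional problem via the affine-equivariance of $\psi^*$ (Remark~2.4 in \citet{dumbgen2011approximation}), and then to lower-bound the first absolute moment of a one-dimensional log-concave density by that of its pre-image distribution. Concretely, fix $v \in \R^d$ with $v \neq 0$ (the case $v = 0$ being trivial), and let $\pi_v : \R^d \to \R$ be the linear map $x \mapsto v^\top x$. Writing $P_v := \pi_v \circ P$ for the pushforward (a distribution on $\R$) and $f_v := \pi_v \circ f$ for the corresponding pushforward density, we have $\mu_{P_v} = v^\top \mu_P$ and $\mu_{f_v} = v^\top \mu_f$. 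The first thing I would establish is that $\Ep{f}{|v^\top(X - \mu_P)|} = \Ep{f_v}{|Y - v^\top\mu_P|}$ and likewise $\Ep{P}{|v^\top(X-\mu_P)|} = \Ep{P_v}{|Y - v^\top\mu_P|}$, so the claim becomes a statement purely about the pushed-forward pair $(P_v, f_v)$ on $\R$. The subtlety here is that $f_v$ need not equal $\psi^*(P_v)$: marginalisation is a projection, not a linear map, so log-concave projection does not commute with projecting to a subspace. However, $f_v$ \emph{is} log-concave (marginals of log-concave densities are log-concave, by Prékopa's theorem), and, crucially, the convex-ordering property~\eqref{eqn:convexorder} for $f = \psi^*(P)$ pushes forward: for any convex $h : \R \to (-\infty,\infty]$, $h \circ \pi_v$ is convex on $\R^d$, so $\Ep{f_v}{h(Y)} = \Ep{f}{h(\pi_v X)} \le \Ep{P}{h(\pi_v X)} = \Ep{P_v}{h(Y)}$. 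Thus $f_v$ is convex-dominated by $P_v$ in the same way that a log-concave projection is.

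Now I would prove the key one-dimensional statement: \emph{if $g$ is a log-concave density on $\R$ and $\nu$ is a distribution on $\R$ with $\Ep{g}{h} \le \Ep{\nu}{h}$ for all convex $h$, then for every $a \in \R$ we have $\Ep{g}{|Y - a|} \ge c_1 \cdot \Ep{\nu}{|Y - a|}$ for an absolute constant $c_1 \in (0,1]$.} The upper direction ($\le$) is immediate from convex ordering applied to $h(y) = |y - a|$; the content is the matching lower bound. Without loss of generality (translating) take $a = 0$. Convex ordering with $h(y) = y$ and $h(y) = -y$ forces $\mu_g = \mu_\nu =: m$; and with $h(y) = (y)_+$, $(y)_-$ it controls the two one-sided first moments. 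The point is that $\Ep{\nu}{|Y|}$ cannot be much larger than $\Ep{g}{|Y|}$: if it were, then since $\Ep{g}{|Y|} \ge \Ep{g}{|Y - m|} - |m|$ is small relative to $\Ep{\nu}{|Y|} \ge \Ep{\nu}{|Y-m|} - |m|$, we would need $\Ep{\nu}{|Y-m|} \gg \Ep{g}{|Y-m|}$; but convex ordering with $h(y) = (y - m - t)_+$ for all $t$ gives $\int_0^\infty \nu((m+t,\infty))\,dt \le \int_0^\infty g((m+t,\infty))\,dt$ (and symmetrically on the left), i.e. $\Ep{\nu}{(Y-m)_+} \le \Ep{g}{(Y-m)_+}$ and $\Ep{\nu}{(Y-m)_-} \le \Ep{g}{(Y-m)_-}$, whence $\Ep{\nu}{|Y-m|} \le \Ep{g}{|Y-m|}$ — so in fact $\nu$'s centred first moment is \emph{at most} $g$'s. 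Combining, $\Ep{\nu}{|Y|} \le \Ep{\nu}{|Y-m|} + |m| \le \Ep{g}{|Y-m|} + |m| \le \Ep{g}{|Y|} + 2|m|$ since $|m| = |\mu_g| \le \Ep{g}{|Y|}$; this already gives $\Ep{\nu}{|Y|} \le 3\,\Ep{g}{|Y|}$, i.e. the lemma with $c_1 = 1/3$ (and hence $c_d$ independent of $d$ — consistent with $c_d \in (0,1]$).

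Assembling the pieces: starting from an arbitrary $v \in \R^d$, push forward to $g := f_v$ and $\nu := P_v$ on $\R$, verify convex ordering survives the pushforward (the display above), verify $g$ is a genuine density (it is, being a marginal of the continuous density $f \in \Fcal_d$; if $f_v$ were degenerate that would contradict $\Ep{f}{|v^\top(X-\mu_P)|} > 0$, which holds because $P \in \Pcal_d$ forces $\Ep{P}{|v^\top(X-\mu_P)|} > 0$ and hence, wait — actually we need the lower bound to conclude that, so instead note $f \in \Fcal_d$ is a density on $\R^d$ and its one-dimensional marginal is automatically a density), apply the one-dimensional result with $a = v^\top \mu_P$, and pull back. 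I would double-check the edge case where $v^\top(X - \mu_P)$ is $P$-a.s. constant: this cannot happen for $P \in \Pcal_d$ unless $v = 0$, by the hyperplane condition, so $\Ep{P}{|v^\top(X-\mu_P)|} > 0$ and there is nothing degenerate. The main obstacle is the correct handling of the first paragraph's observation that $f_v \ne \psi^*(P_v)$ in general — one must resist the temptation to invoke a one-dimensional version of the lemma for log-concave projections and instead isolate exactly the property (convex domination by $\nu = P_v$) that does transfer; once that is pinned down, the one-dimensional argument is elementary, turning entirely on the identity $\Ep{\nu}{(Y-m)_+} \le \Ep{g}{(Y-m)_+}$ from convex ordering together with $|m| \le \Ep{g}{|Y|}$.
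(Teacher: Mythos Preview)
Your one-dimensional lemma is false as stated, and the error in your ``proof'' of it is that you have reversed the convex-ordering inequality. Convex ordering says $\Ep{g}{h(Y)} \le \Ep{\nu}{h(Y)}$ for convex $h$; applying this with $h(y)=(y-m)_+$ yields $\Ep{g}{(Y-m)_+} \le \Ep{\nu}{(Y-m)_+}$, the opposite of what you write. So your chain
\[
\Ep{\nu}{|Y|} \le \Ep{\nu}{|Y-m|}+|m| \le \Ep{g}{|Y-m|}+|m|
\]
breaks at the second step. Notice too that your argument never uses the log-concavity of $g$, which should already be a warning sign.

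To see that the 1D lemma cannot be repaired: take $g$ to be the density of $N(0,\sigma^2)$ and $\nu=N(0,1)$, with $0<\sigma<1$. Then $g$ is log-concave, and writing $Y=\sigma Z+W$ with $Z\sim N(0,1)$, $W\sim N(0,1-\sigma^2)$ independent, conditional Jensen gives $\Ep{g}{h}\le\Ep{\nu}{h}$ for every convex $h$. Yet $\Ep{g}{|Y|}/\Ep{\nu}{|Y|}=\sigma$, which can be made arbitrarily small. So there is no universal $c_1>0$.

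The underlying problem is that your reduction to one dimension discards exactly the structure that makes the lemma true. You correctly flag that $f_v\neq\psi^*(P_v)$ in general, but then proceed as though convex domination is all that survives the pushforward. It is not enough: the lemma genuinely needs that $f$ is the \emph{maximizer} of $\Ep{P}{\log f(X)}$ over $\Fcal_d$, not merely a log-concave density dominated by $P$ in convex order. The paper exploits this by transforming to isotropic coordinates (via $\Sigma=\Cov_f(X)$), using \citet{lovasz2007geometry} to get $\Ep{f}{|v^\top(X-\mu_P)|}\ge\tfrac14\|\Sigma^{1/2}v\|$, and then---crucially---bounding $\Ep{P}{|v^\top(X-\mu_P)|}\le\|\Sigma^{1/2}v\|\cdot\Ep{Q}{\|X\|}$ where $Q$ is the whitened version of $P$. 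The bound $\Ep{Q}{\|X\|}\le a_d$ comes from comparing $\Ep{Q}{\log\psi^*(Q)(X)}$ against $\Ep{Q}{\log g(X)}$ for a specific competitor density $g$, which uses the optimality of the projection and cannot be recovered from convex ordering alone.
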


By \citet[Eqn.~(4)]{dumbgen2011approximation},  log-concave projection preserves the mean, i.e.,
\[\mu_P = \Ep{P}{X} =  \Ep{f}{X}.\]
We can also define the covariance matrix $\Sigma = \textnormal{Cov}_f(X)$,
which is finite (since all moments of a log-concave distribution are finite) and strictly positive definite.  
Lemma~\ref{lem:1stmoment} immediately implies bounds on the eigenvalues of $\Sigma$:
\begin{corollary}\label{cor:lambda_min}
Fix any $P\in\Pcal_d$,  let $f=\psi^*(P)$, and let $\Sigma =\textnormal{Cov}_f(X)$ be the covariance matrix of the distribution with density $f$.
Then for all $v\in\R^d$,
\[c_d^2 \bigl\{\Ep{P}{\bigl|v^\top(X-\mu_P)\bigr|}\bigr\}^2 \leq v^\top\Sigma v \leq 16 \bigl\{\Ep{P}{\bigl|v^\top(X-\mu_P)\bigr|}\bigr\}^2,\]
where $c_d\in(0,1]$ is taken from Lemma~\ref{lem:1stmoment}.
In particular, this implies that
\[\lambda_{\min}(\Sigma) \geq (c_d \eps_P)^2,\]
where $\lambda_{\min}(\Sigma)$ denotes the smallest eigenvalue of $\Sigma$.
\end{corollary}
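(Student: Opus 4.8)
The plan is to reduce the two claimed matrix inequalities to a one-dimensional statement about the marginal of $f$ in the direction $v$, and then to combine Lemma~\ref{lem:1stmoment}, Jensen's inequality, the convex ordering property~\eqref{eqn:convexorder}, and a standard moment comparison for one-dimensional log-concave distributions. Fix $v\in\R^d$; if $v=0$ then every term vanishes, so assume $v\neq 0$. Since $\psi^*$ preserves the mean we have $\mu_f=\mu_P$, so the real-valued random variable $Y:=v^\top(X-\mu_P)$ obtained by drawing $X$ from $f$ has $\Ep f Y=0$ and $v^\top\Sigma v=\Var_f(v^\top X)=\Ep f{Y^2}$. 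Moreover, being (after an affine change of variable) a one-dimensional marginal of the log-concave density $f$, the variable $Y$ has a log-concave density on $\R$ by Pr\'ekopa's theorem.

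For the lower bound, Jensen's inequality gives $\Ep f{Y^2}\geq\bigl(\Ep f{|Y|}\bigr)^2=\bigl(\Ep f{|v^\top(X-\mu_P)|}\bigr)^2$, and Lemma~\ref{lem:1stmoment} then yields $\Ep f{|v^\top(X-\mu_P)|}\geq c_d\,\Ep P{|v^\top(X-\mu_P)|}$, so together $v^\top\Sigma v\geq c_d^2\bigl\{\Ep P{|v^\top(X-\mu_P)|}\bigr\}^2$. For the upper bound I would invoke the one-dimensional fact that any mean-zero log-concave random variable $Y$ satisfies $\EE{Y^2}\leq 16\bigl(\EE{|Y|}\bigr)^2$; a clean route is to combine the sharp inequality $\Var_g(Y)\leq\norm{g}_\infty^{-2}$ with the sharp inequality $\Ep g{|Y|}\geq(4\norm{g}_\infty)^{-1}$, both elementary consequences of one-dimensional log-concavity (attained by a one-sided exponential and by the uniform density, respectively). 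Applying this to $Y=v^\top(X-\mu_P)$ under $f$, and then the convex-ordering consequence $\Ep f{|v^\top(X-\mu_P)|}\leq\Ep P{|v^\top(X-\mu_P)|}$ of~\eqref{eqn:convexorder}, gives $v^\top\Sigma v=\Ep f{Y^2}\leq 16\bigl(\Ep f{|v^\top(X-\mu_P)|}\bigr)^2\leq 16\bigl\{\Ep P{|v^\top(X-\mu_P)|}\bigr\}^2$, which is the two-sided bound for every $v$.

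The eigenvalue claim then follows by restricting to unit vectors: since $\Sigma$ is symmetric and strictly positive definite, $\lambda_{\min}(\Sigma)=\min_{v\in\mathbb{S}_{d-1}}v^\top\Sigma v\geq c_d^2\min_{v\in\mathbb{S}_{d-1}}\bigl\{\Ep P{|v^\top(X-\mu_P)|}\bigr\}^2=c_d^2\,\eps_P^2$, using the definition of $\eps_P$ and monotonicity of $t\mapsto t^2$ on $[0,\infty)$. Of these ingredients, Jensen's inequality, the convex-ordering step, and passing to the infimum over the sphere are immediate; the one step that needs genuine care is the one-dimensional moment comparison $\EE{Y^2}\leq 16\bigl(\EE{|Y|}\bigr)^2$, and in particular getting the constant right. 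Note that mean-zeroness is essential there: without it the ratio $\EE{Y^2}/\bigl(\EE{|Y|}\bigr)^2$ can be arbitrarily large even within the log-concave class, for instance for a narrow uniform density straddling the origin.
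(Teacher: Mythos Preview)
Your proof is correct and follows exactly the paper's structure: Lemma~\ref{lem:1stmoment} together with Cauchy--Schwarz (your Jensen step) for the lower bound, the log-concave moment comparison $\Ep{f}{Y^2}\leq 16\bigl(\Ep{f}{|Y|}\bigr)^2$ together with the convex-ordering inequality~\eqref{eqn:convexorder} for the upper bound, and then restriction to unit vectors for $\lambda_{\min}(\Sigma)$. The only difference is that the paper simply cites \citet[Theorem~5.22]{lovasz2007geometry} for the moment comparison, whereas you reduce to one dimension via Pr\'ekopa and sketch a self-contained proof through the pair of inequalities $\Var_g(Y)\leq\norm{g}_\infty^{-2}$ and $\Ep{g}{|Y|}\geq(4\norm{g}_\infty)^{-1}$; one small correction to your closing aside is that for log-concave $Y$ the ratio $\EE{Y^2}/\bigl(\EE{|Y|}\bigr)^2$ remains universally bounded even without centering (write $\EE{Y^2}=\Var(Y)+(\EE{Y})^2$ and use $\Ep{}{|Y-\EE{Y}|}\leq 2\Ep{}{|Y|}$), so mean-zeroness affects the constant but is not needed for boundedness.
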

\begin{proof}[Proof of Corollary~\ref{cor:lambda_min}]
First, for the lower bound, by Lemma~\ref{lem:1stmoment} and Cauchy--Schwarz,
  \[c_d^2 \bigl\{\Ep{P}{\bigl|v^\top (X - \mu_P)\bigr|}\bigr\}^2 \leq \bigl\{\Ep{f}{\bigl|v^\top (X - \mu_P)\bigr|}\bigr\}^2 \leq  \Ep{f}{|v^\top (X - \mu_P)|^2}= v^\top \Sigma v .
  \]
Next, for the upper bound,
\[v^\top \Sigma v   = \Ep{f}{|v^\top (X - \mu_P)|^2} \leq 16\bigl\{\Ep{f}{\bigl|v^\top (X - \mu_P)\bigr|}\bigr\}^2 \leq 16\bigl\{\Ep{P}{\bigl|v^\top (X - \mu_P)\bigr|}\bigr\}^2 ,\]
where the first inequality is due to \citet[Theorem 5.22]{lovasz2007geometry}
while the second is by~\eqref{eqn:convexorder} \citep[Eqn.~(3)]{dumbgen2011approximation}.
\end{proof}

\subsubsection{A lower bound on a ball}

Next we show that for any $P$, its log-concave projection $f = \psi^*(P)$ is lower bounded
 on a ball of radius of order $\eps_P$.
\begin{lemma}\label{lem:ball}
Fix any $P\in\Pcal_d$, and let $f=\psi^*(P)$. Then there exist $b_d,r_d \in (0,1]$, depending only on $d$, such that
\[f(x) \geq b_d \cdot \sup_{x' \in \mathbb{R}^d}f(x') \textnormal{ for all $x\in\mathbb{B}_d(\mu_P,r_d\eps_P)$}.\]
\end{lemma}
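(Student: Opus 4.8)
The plan is to exploit the variational characterisation of $f = \psi^*(P)$ together with the moment/covariance bounds already established. Specifically, I would first reduce to a normalised setting: since log-concave projection commutes with affine transformations (Remark~2.4 of \citet{dumbgen2011approximation}) and the statement we want is affine-equivariant only up to the factor $\eps_P$, the cleanest route is to apply an affine map $\bA$ so that the projected density $f$ becomes isotropic, i.e.\ has mean $0$ and covariance $I_d$. Under this normalisation, $\mu_P = 0$, and Corollary~\ref{cor:lambda_min} tells us that $\eps_{\bA\circ P}$ is bounded above and below by dimension-dependent constants (since $\lambda_{\min}(I_d) = 1$ gives $\eps_{\bA\circ P} \le 1/c_d$, and the upper bound on $v^\top \Sigma v$ gives $\eps_{\bA\circ P} \ge 1/4$). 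So after normalisation the claim reduces to: an isotropic log-concave density is bounded below by a dimension-dependent constant times its supremum, on a ball of dimension-dependent radius around the origin. Then I would transfer back through $\bA^{-1}$, checking that the radius $r_d \eps_P$ survives — this is where the robust quantity $\eps_P$ enters, roughly because $\|\bA\|$ is controlled by $1/\eps_P$ via the eigenvalue lower bound in Corollary~\ref{cor:lambda_min}.

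For the core claim about isotropic log-concave densities, the key inputs are standard facts from the log-concave literature (e.g.\ \citet{lovasz2007geometry}): an isotropic log-concave density $f$ on $\R^d$ satisfies $f(0) \ge c_d' \sup_x f(x)$ and more generally $f(x) \ge c_d'' \sup_x f(x)$ for all $x$ in a ball $\mathbb{B}_d(R_d)$ of fixed radius, because the mode, mean, and centroid of an isotropic log-concave density are all within a bounded distance of the origin, the density at the mode and at the mean are comparable up to dimension-dependent constants, and log-concavity then propagates the lower bound from the mean to a whole neighbourhood. Concretely, if $f(0) \ge a_d \sup f$ and $f(y_0) \ge a_d \sup f$ for the mode $y_0$ with $\|y_0\| \le \kappa_d$, then for any $x$ with $\|x\| \le \kappa_d$ one can write $x$ as a convex combination involving $0$, $y_0$ and a reflected point, and use $\log f$ concave to get $f(x) \ge a_d^{O(1)} \sup f$; alternatively one cites directly that sublevel sets $\{f \ge t \sup f\}$ contain a ball of radius depending only on $d$ and $t$ for isotropic $f$.

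The step I expect to be the main obstacle is making the affine reduction fully rigorous while tracking how the ball radius $r_d\eps_P$ is preserved. The subtlety is that the normalising map $\bA$ is chosen to make $f = \psi^*(P)$ isotropic, not to make $P$ isotropic, so I need Corollary~\ref{cor:lambda_min} to control both $\lambda_{\min}(\Sigma)$ and $\lambda_{\max}(\Sigma)$ in terms of the directional first moments of $P$; the lower bound $\lambda_{\min}(\Sigma) \ge (c_d\eps_P)^2$ is stated, but to bound the operator norm of $\bA^{-1}$ — equivalently to see that a ball of radius $\asymp 1$ in the normalised coordinates pulls back to a ball of radius $\asymp \eps_P$ in the original coordinates — I would want $\lambda_{\min}(\Sigma) \ge (c_d \eps_P)^2$, which is exactly what is provided. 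So in fact the pullback of $\mathbb{B}_d(R_d)$ under the isotropising map contains $\mathbb{B}_d(\mu_P, c_d R_d \eps_P)$, and setting $r_d := c_d R_d$ (clipped to lie in $(0,1]$) and $b_d := c_d''$ (clipped to $(0,1]$) completes the argument. The other minor point to handle carefully is that $\sup_x f(x)$ is invariant in a controlled way under $\bA$ — it is not invariant, it scales by $|\det \bA|$, but since the lower bound is multiplicative in $\sup f$ this scaling cancels, so the inequality $f(x) \ge b_d \sup f$ is preserved exactly under affine maps.
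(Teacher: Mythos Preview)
Your proposal is correct and follows essentially the same route as the paper: isotropise $f$ via its covariance $\Sigma$, invoke the Lov\'asz--Vempala bound that an isotropic log-concave density is at least a constant multiple of its supremum on a fixed-radius ball (the paper cites Theorem~5.14(a,b) of \citet{lovasz2007geometry} directly, with radius $1/9$), and then pull back using $\lambda_{\min}(\Sigma)\ge (c_d\eps_P)^2$ from Corollary~\ref{cor:lambda_min} to get the ball $\mathbb{B}_d(\mu_P,r_d\eps_P)$ with $r_d=c_d/9$. Your detour through bounding $\eps_{\bA\circ P}$ on both sides is unnecessary---only the eigenvalue lower bound is needed for the pullback---but this does not affect correctness.
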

\begin{proof}[Proof of Lemma~\ref{lem:ball}]
Let $\Sigma = \textnormal{Cov}_f(X)$, and define the isotropic, log-concave density $g(x) = f(\Sigma^{1/2}x + \mu_P) \det^{1/2}(\Sigma)$.  By \citet[Theorem 5.14(a) and~(b)]{lovasz2007geometry},
\[\inf_{x:\norm{x}\leq 1/9}g(x) \geq b_d \sup_{x \in \mathbb{R}^d} g(x),\]
where $b_d\in(0,1]$ depends only on $d$. This immediately implies that
\[f(x) \geq b_d \sup_{x' \in \mathbb{R}^d}f(x')\textnormal{ for all $x\in\R^d$ with $\norm{\Sigma^{-1/2}(x-\mu_P)}\leq 1/9$.}\]
But $\norm{\Sigma^{-1/2}(x-\mu_P)} \leq \lambda_{\min}^{-1/2}(\Sigma)\|x - \mu_P\| \leq \|x - \mu_P\|/(c_d\eps_P)$ by Corollary~\ref{cor:lambda_min}, so the result holds with $r_d = c_d/9$.
\end{proof}

\subsection{Key lemma: the Lipschitz majorization}\label{sec:proof_state_Lip_lemma}
Let
\[\Phi_d := \bigg\{\textnormal{$\phi:\R^d\rightarrow [-\infty,\infty)$ \ : \ \begin{tabular}{c}\textnormal{$\phi$ is a proper concave, upper semi-continuous function,}\\\textnormal{and $\phi(x)\rightarrow-\infty$ as $\norm{x}\rightarrow\infty$}\end{tabular}}\bigg\},\]
and define the function $\phi^*:\Pcal_d\rightarrow\Phi_d$ 
that maps a distribution $P$ to the log-density $\phi = \phi^*(P)$ given by $\phi(x) = \log \big[\psi^*(P)\big](x)$.
\citet[Theorem 2.2]{dumbgen2011approximation} establishes that the log-density $\phi = \phi^*(P)$
maximizes $\ell(\phi,P) := \Ep{P}{\phi(X)} - \int_{\mathbb{R}^d} e^{\phi(x)}\;\mathsf{d}x + 1$ over $\Phi_d$. 
We now show that this maximum can be nearly attained by a Lipschitz function.
In particular, for any $\phi\in \Phi_d$ and any $L>0$, define its $L$-Lipschitz majorization $\phi^L:\mathbb{R}^d \rightarrow \mathbb{R}$ by
\begin{equation}\label{eqn:Lip_maj}\phi^L(x) := \sup_{y\in\R^d} \big\{\phi(y) - L\norm{x-y}\big\}.\end{equation}
It can easily be verified that this function is concave, $L$-Lipschitz, and satisfies $\phi^L(x)\geq \phi(x)$ for all $x \in \mathbb{R}^d$.
Furthermore, it holds that
$\int_{\mathbb{R}^d} e^{\phi^L(x)}\;\mathsf{d}x<\infty$ (this follows from the fact that there exist constants
$a\in\R$, $b>0$ such that $\phi(y)\leq a-b\norm{y}$ for all $y\in\R^d$ \citep{dumbgen2011approximation}), and
moreover $\int_{\mathbb{R}^d} e^{\phi^L(x)}\;\mathsf{d}x>0$.

Next we normalize to produce a log-density. For any $\phi\in\Phi_d$, we define
\begin{equation}\label{eqn:normalize_Lip_maj}\tilde\phi^L(x) := \phi^L(x) - \log\biggl(\int_{\mathbb{R}^d} e^{\phi^L(x)}\;\mathsf{d}x\biggr).\end{equation}

The following result proves that, if $\phi=\phi^*(P)$, then for $L$ sufficiently large, $\tilde\phi^L\in \Phi_d$ is nearly optimal for $P$ (in the sense of maximizing $\ell(\cdot,P)$).
\begin{lemma}\label{lem:Lip_maj}
Fix any $P\in\Pcal_d$, let $\phi = \phi^*(P)$, and let $\phi^L$ and $\tilde\phi^L$ be defined as in~\eqref{eqn:Lip_maj} and~\eqref{eqn:normalize_Lip_maj}. Then for any $L \geq \frac{2d}{r_d\eps_P}$,
\[\ell(\tilde\phi^L,P) \geq \ell(\phi^L,P) \geq \ell(\phi,P) -\frac{4d}{Lb_dr_d\eps_P},\]
where $r_d,b_d \in(0,1]$ are taken from Lemma~\ref{lem:ball}. In particular, this implies that
\[\Ep{P}{\tilde\phi^L(X)} \geq \Ep{P}{\phi(X)}- \frac{4d}{Lb_dr_d\eps_P}.\]
\end{lemma}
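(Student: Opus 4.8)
Write $Z := \int_{\R^d} e^{\phi^L(x)}\,\mathsf{d}x \in (0,\infty)$. Since $\phi = \phi^*(P)$, the function $e^{\phi}$ is a probability density, so $\int_{\R^d} e^{\phi} = 1$ and $\ell(\phi,P) = \Ep{P}{\phi(X)}$; similarly $\tilde\phi^L$ is normalised in~\eqref{eqn:normalize_Lip_maj} so that $\int_{\R^d} e^{\tilde\phi^L} = 1$ and $\ell(\tilde\phi^L,P) = \Ep{P}{\tilde\phi^L(X)} = \Ep{P}{\phi^L(X)} - \log Z$. Since $\ell(\phi^L,P) = \Ep{P}{\phi^L(X)} - Z + 1$, subtracting gives $\ell(\tilde\phi^L,P) - \ell(\phi^L,P) = Z - 1 - \log Z \geq 0$ by the elementary inequality $t \geq 1 + \log t$ $(t>0)$, which proves the first inequality. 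For the second, $\phi^L \geq \phi$ pointwise gives $\Ep{P}{\phi^L(X)} \geq \Ep{P}{\phi(X)}$, hence $\ell(\phi,P) - \ell(\phi^L,P) = \bigl(\Ep{P}{\phi(X)} - \Ep{P}{\phi^L(X)}\bigr) + (Z-1) \leq Z-1$. So it suffices to prove
\[
Z - 1 \;=\; \int_{\R^d}\bigl(e^{\phi^L(x)} - e^{\phi(x)}\bigr)\,\mathsf{d}x \;\leq\; \frac{4d}{L\,b_d\,r_d\,\eps_P},
\]
and the ``in particular'' statement then follows by chaining $\Ep{P}{\tilde\phi^L(X)} = \ell(\tilde\phi^L,P) \geq \ell(\phi^L,P) \geq \ell(\phi,P) - \tfrac{4d}{L b_d r_d \eps_P} = \Ep{P}{\phi(X)} - \tfrac{4d}{L b_d r_d \eps_P}$.

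\textbf{Tools for the estimate.}
Let $M := \sup_{x} e^{\phi(x)}$. By Lemma~\ref{lem:ball}, $e^{\phi} \geq b_d M$ on $B := \mathbb{B}_d(\mu_P, r_d\eps_P)$, so integrating over $B$ and using $\int e^{\phi} = 1$ yields $M \leq \bigl(b_d\,\omega_d\,(r_d\eps_P)^d\bigr)^{-1}$, where $\omega_d := \Leb_d(\mathbb{B}_d)$. Two structural properties of the majorization are needed. (i) $\phi^L(x) = \phi(x)$ at every $x$ at which $\phi$ has a supergradient of Euclidean norm at most $L$; writing $K_L$ for the (convex) set of such $x$, a short argument bounding the supergradients of $\phi$ on a neighbourhood of $\mu_P$ via Lemma~\ref{lem:ball}, together with the hypothesis $L \geq \tfrac{2d}{r_d\eps_P}$, shows that $K_L$ contains a Euclidean ball of radius comparable to $r_d\eps_P$ centred at $\mu_P$ (this is where the lower bound on $L$ enters), and hence so does each superlevel set $K_\tau := \{x : e^{\phi(x)} \geq \tau\}$ with $\tau$ not too close to $M$; moreover the mode of $\phi$ lies in the interior of $K_L$. (ii) For $x \notin K_L$ the supremum in~\eqref{eqn:Lip_maj} is attained at some $y \in \partial K_L$ with $\phi^L(x) = \phi(y) - L\|x-y\|$; consequently, if $e^{\phi^L(x)} \geq \tau$ then the maximiser satisfies $e^{\phi(y)} \geq \tau e^{L\|x-y\|}$, so $y \in K_\tau$ and $x$ lies within distance $\tfrac1L\log(M/\tau)$ of $K_\tau$. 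In words, $\phi^L = \phi$ on $K_L$ and decays at rate (at least) $L$ away from $K_L$.

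\textbf{Executing the estimate, and the main obstacle.}
Since $e^{\phi^L} \geq e^{\phi}$ and both have supremum $M$, the layer-cake formula gives
\[
\int_{\R^d}\bigl(e^{\phi^L} - e^{\phi}\bigr)\,\mathsf{d}x \;=\; \int_0^{M}\Bigl[\Leb_d\bigl(\{e^{\phi^L} \geq \tau\}\bigr) - \Leb_d(K_\tau)\Bigr]\,\mathsf{d}\tau,
\]
and by (ii) the integrand is at most $\Leb_d\bigl(K_\tau \oplus \mathbb{B}_d(\tfrac1L\log(M/\tau))\bigr) - \Leb_d(K_\tau)$. When $K_\tau$ contains a ball of radius $\rho \asymp r_d\eps_P$, adding a ball of radius $s$ inflates the volume by a factor at most $(1+s/\rho)^d$; combined with the Markov bound $\Leb_d(K_\tau) \leq 1/\tau$ and the bound on $M$, this controls the integrand, and the $\tau$-integral then evaluates to $O_d\bigl(\tfrac1{L b_d r_d \eps_P}\bigr)$ with the stated constant. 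The delicate point is to make this uniform as $\tau \downarrow 0$, where $K_\tau$ (and $K_L$) may be \emph{unbounded}---e.g.\ a long thin slab---so that the naive estimate ``extra volume $\lesssim s_\tau\cdot\operatorname{perimeter}(K_\tau)$'' diverges on integration in $\tau$. Here one must use that the extra mass is not a uniform shell but concentrates near the ``ends'' of $K_\tau$, weighting by the height of $e^{\phi}$ along $\partial K_L$: the bound reduces to an estimate of the shape $\tfrac1L\int_{\partial K_L}e^{\phi}\,\mathsf{d}\mathcal{H}^{d-1} \lesssim_d \tfrac1{L b_d r_d \eps_P}$, which, after an affine reduction to the isotropic case (log-concave projection commutes with affine maps), follows from a dimension-only bound on the total variation of an isotropic log-concave density together with the lower bound $\lambda_{\min}(\Sigma) \geq (c_d\eps_P)^2$ of Corollary~\ref{cor:lambda_min}, $\Sigma$ being the covariance of $e^{\phi} = \psi^*(P)$. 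I expect this last, geometric step---bounding the ``boundary flux'' of $e^{\phi}$ through $\partial K_L$ uniformly in $P$---to be the crux of the proof; everything preceding it is bookkeeping around the rate-$L$ decay of $\phi^L$ off $K_L$ and the bound $M \lesssim_d (b_d(r_d\eps_P)^d)^{-1}$.
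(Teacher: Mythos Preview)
Your reduction is correct and matches the paper exactly: both inequalities in the lemma follow once you show $Z-1 \leq \frac{4d}{Lb_dr_d\eps_P}$, via $t-1 \geq \log t$ and $\phi^L \geq \phi$.

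For the main estimate, however, your single-parameter layer-cake bound $\{e^{\phi^L}\geq \tau\} \subseteq K_\tau \oplus \mathbb{B}_d\bigl(\tfrac1L\log(M/\tau)\bigr)$, while true, discards too much: it decouples the level of $\phi(y_x)$ from the distance $\|x-y_x\|$ and takes the worst case of each, whereas the two are tied together by $\phi(y_x) = \phi^L(x) + L\|x-y_x\|$. (Incidentally, $K_\tau$ is always bounded since $\phi(x)\to-\infty$; the real issue is that its diameter grows without bound as $\tau\downarrow 0$ while the inflation radius also grows, so the Minkowski-sum volume bound degenerates.) Your proposed rescue via a ``boundary flux'' estimate $\tfrac1L\int_{\partial K_L} e^{\phi}\,\mathsf{d}\mathcal{H}^{d-1}$ is not worked out, and it is not clear it yields the exact constant $4d/(Lb_dr_d\eps_P)$.

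The paper sidesteps this entirely with a \emph{two-parameter} layer cake: writing $e^{\phi^L(x)} = e^{\phi(y_x)}\cdot e^{-L\|x-y_x\|} = \bigl(\int_{-\infty}^{\phi(y_x)} e^t\,\mathsf{d}t\bigr)\bigl(\int_{L\|x-y_x\|}^\infty e^{-s}\,\mathsf{d}s\bigr)$ and applying Fubini gives
\[
Z - 1 \;\leq\; \int_{-\infty}^{M_\phi}\!\int_0^\infty e^{t-s}\,\Leb_d\bigl(\mathrm{Nbd}(D_t,s/L)\setminus D_t\bigr)\,\mathsf{d}s\,\mathsf{d}t,
\]
with $D_t=\{\phi\geq t\}$. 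The crux is then a \emph{monotonicity} consequence of Steiner's formula: $\delta\mapsto \Leb_d(\mathrm{Nbd}(C,\delta)\setminus C)/\delta$ is nondecreasing for convex $C$. This lets one replace the small radius $s/L$ by the larger $\tfrac{s\,r_d\eps_P}{2d}$ at the cost of the factor $\tfrac{2d}{Lr_d\eps_P}$ (here is where $L\geq \tfrac{2d}{r_d\eps_P}$ enters), after which $D_t\subseteq D_{t+\log b_d}\supseteq \mathbb{B}_d(\mu_P,r_d\eps_P)$ from Lemma~\ref{lem:ball} gives $\mathrm{Nbd}(D_{t+\log b_d},\tfrac{s\,r_d\eps_P}{2d})\subseteq (1+\tfrac{s}{2d})D_{t+\log b_d}$, hence volume $\leq e^{s/2}\Leb_d(D_{t+\log b_d})$. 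The $e^{-s}\cdot e^{s/2}$ integrates to $2$, and $\int e^{t+\log b_d}\Leb_d(D_{t+\log b_d})\,\mathsf{d}t = 1$ by layer cake, delivering exactly $\tfrac{4d}{Lb_dr_d\eps_P}$. So the missing idea is the Steiner monotonicity trick combined with the two-parameter decomposition; no isotropic-reduction or surface-integral argument is needed.
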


\subsubsection{Bounding the Hellinger distance}
Now we apply Lemma~\ref{lem:Lip_maj} to the problem of bounding Hellinger distance.
\begin{corollary}\label{cor:Lip_maj}
Fix any $P,Q\in\Pcal_d$, and define $\eps = \min\{\eps_P,\eps_Q\}>0$. Let $\phi_P=\phi^*(P)$ and $\phi_Q=\phi^*(Q)$, and let $f_P=\psi^*(P)$ and $f_Q=\psi^*(Q)$ be the corresponding
density functions. Let $\phi^L_P$ and $\phi^L_Q$ be the $L$-Lipschitz majorizations of $\phi_P$ and $\phi_Q$, respectively, as defined in~\eqref{eqn:Lip_maj},
for some $L \geq \frac{2d}{r_d\eps}$, where $r_d \in (0,1]$ is taken from Lemma~\ref{lem:ball}. Then
\[\dH^2(f_P,f_Q) \leq \frac{16d}{Lb_dr_d\eps}+ \left(\Ep{P}{\phi^L_P(X)} - \Ep{Q}{\phi^L_P(X)}\right) +  \left(\Ep{Q}{\phi^L_Q(X)}  - \Ep{P}{\phi^L_Q(X)}\right) ,\]
where $b_d  \in (0,1]$ is taken from Lemma~\ref{lem:ball}.
\end{corollary}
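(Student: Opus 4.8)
The plan is to express the squared Hellinger distance $\dH^2(f_P, f_Q)$ in terms of log-likelihood type quantities, and then bootstrap from the near-optimality of the Lipschitz majorizations established in Lemma~\ref{lem:Lip_maj}. First I would recall the standard identity linking Hellinger distance to the objective $\ell(\cdot, \cdot)$: for any two densities $f = e^\phi$ and $g = e^\psi$ that are both normalized (integrate to $1$), one has
\[
\dH^2(f,g) = \int \left(\sqrt{f} - \sqrt{g}\right)^2 = 2 - 2\int \sqrt{fg} = 2 - 2\int e^{(\phi+\psi)/2}.
\]
The key observation is that the midpoint log-density $\frac{1}{2}(\phi+\psi)$ can be compared against the optimizers using the concavity/variational characterization of $\ell$. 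Specifically, since $\phi_P = \phi^*(P)$ maximizes $\ell(\cdot, P)$ over $\Phi_d$ and likewise $\phi_Q = \phi^*(Q)$ maximizes $\ell(\cdot, Q)$, and since $\ell(\phi, R) = \Ep{R}{\phi(X)} - \int e^\phi + 1$ is concave in $\phi$, I would average the two inequalities $\ell(\phi_P, P) \geq \ell(\psi, P)$ and $\ell(\phi_Q, Q) \geq \ell(\psi, Q)$ for a suitable test function $\psi$, or more directly use a first-order optimality argument. The cleanest route: take $\psi$ to be (a normalization of) the midpoint, use that $\int e^{\phi_P} = \int e^{\phi_Q} = 1$, and manipulate so that the quantity $2 - 2\int e^{(\phi_P + \phi_Q)/2}$ appears, controlled by a sum of terms of the form $\Ep{P}{\phi_P(X) - \phi_Q(X)}$ etc.

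Second, I would replace the true optimizers $\phi_P, \phi_Q$ by their Lipschitz majorizations $\phi^L_P, \phi^L_Q$. The point of working with the Lipschitz versions is that later (in the proof of Theorem~\ref{thm:main}) one wants to bound cross terms like $\Ep{P}{\phi^L_P(X)} - \Ep{Q}{\phi^L_P(X)}$ using the Wasserstein distance $\dw(P,Q)$ together with the Lipschitz constant $L$ — this is where the $\dw(P,Q)/\eps$ ratio and eventually the exponent $1/4$ will come from. So the structure of the bound in the corollary is engineered to isolate exactly those cross terms. Concretely, I would run the Hellinger-to-objective computation above but using the normalized Lipschitz majorizations $\tilde\phi^L_P, \tilde\phi^L_Q$ in place of $\phi_P, \phi_Q$ wherever the majorizations appear, and then pay a penalty controlled by Lemma~\ref{lem:Lip_maj}: each time we swap an optimizer $\phi_R$ for $\tilde\phi^L_R$ inside an expectation $\Ep{R}{\cdot}$, we lose at most $\frac{4d}{Lb_dr_d\eps}$ (and there are two such swaps, plus the factor-$2$ in front of the integral in the Hellinger identity accounts for why $16d$ rather than $8d$ appears). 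The normalization constant $\log\int e^{\phi^L}$ needs to be tracked carefully, since $\tilde\phi^L = \phi^L - \log\int e^{\phi^L}$ and this additive constant shifts both the expectation terms and the exponential-integral term; the fact that it cancels appropriately (or contributes in a controlled direction) is what makes the unnormalized $\phi^L_P, \phi^L_Q$ — rather than $\tilde\phi^L_P, \tilde\phi^L_Q$ — appear in the final statement.

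I expect the main obstacle to be the bookkeeping around the normalization constants and the direction of the inequalities when passing between $\phi$, $\phi^L$, and $\tilde\phi^L$. The subtle points are: (i) $\phi^L \geq \phi$ pointwise, so $\int e^{\phi^L} \geq \int e^\phi = 1$, meaning $\log\int e^{\phi^L} \geq 0$, and one must check this pushes the error in the favorable direction; (ii) the Hellinger identity requires normalized densities, so one cannot naively plug in $\phi^L_P$ and $\phi^L_Q$ — one genuinely needs $\tilde\phi^L_P$ and $\tilde\phi^L_Q$ for the identity $\dH^2 = 2 - 2\int e^{(\tilde\phi^L_P + \tilde\phi^L_Q)/2}$, and then must argue that $\int e^{(\tilde\phi^L_P + \tilde\phi^L_Q)/2} \geq $ something expressible via the unnormalized versions, perhaps via an application of the concavity of $\log$ or the AM–GM / Cauchy–Schwarz inequality $\int e^{(\psi_1+\psi_2)/2} \leq \sqrt{\int e^{\psi_1}}\sqrt{\int e^{\psi_2}}$; (iii) keeping the penalty terms additive and verifying the constant $16d$ (versus $8d$ or $32d$) is correct. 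Once these are handled, combining (a) the Hellinger identity, (b) the optimality inequalities for $\phi_P, \phi_Q$ against the midpoint test function, and (c) the Lipschitz-majorization penalty from Lemma~\ref{lem:Lip_maj}, in that order, should yield the stated bound after rearrangement.
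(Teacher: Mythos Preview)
Your midpoint route does produce the clean starting inequality
\[
2\,\dH^2(f_P,f_Q)\ \leq\ \bigl[\Ep{P}{\phi_P(X)}-\Ep{Q}{\phi_P(X)}\bigr]\ +\ \bigl[\Ep{Q}{\phi_Q(X)}-\Ep{P}{\phi_Q(X)}\bigr],
\]
but the subsequent swap step fails. To reach the stated bound you must replace $\phi_P$ by $\phi^L_P$ in the first bracket and $\phi_Q$ by $\phi^L_Q$ in the second. The difference this introduces in the first bracket is
\[
\Ep{Q}{\phi^L_P(X)-\phi_P(X)}\ -\ \Ep{P}{\phi^L_P(X)-\phi_P(X)}.
\]
The second term here is harmless (it is nonnegative and actually unnecessary), but the first term $\Ep{Q}{\phi^L_P-\phi_P}$ can be arbitrarily large, indeed $+\infty$ whenever $Q$ charges the complement of the support of $f_P$. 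Lemma~\ref{lem:Lip_maj} gives you control of $\Ep{R}{\phi_R-\tilde\phi^L_R}$ only when the expectation is taken under the \emph{same} distribution $R$; it says nothing about $\Ep{Q}{\phi_P-\phi^L_P}$. So the plan ``swap $\phi_R$ for $\tilde\phi^L_R$ inside $\Ep{R}{\cdot}$ and pay $\frac{4d}{Lb_dr_d\eps}$'' simply does not cover the cross terms that your midpoint inequality produces.

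The paper avoids this obstruction by never writing down $\Ep{P}{\phi_Q}$ at all. Instead it inserts the Lipschitz densities $\tilde f^L_P,\tilde f^L_Q$ as \emph{intermediate} points via the triangle inequality,
\[
\dH^2(f_P,f_Q)\ \leq\ \dH^2(f_P,\tilde f^L_P)+\dH^2(f_Q,\tilde f^L_P)+\dH^2(f_P,\tilde f^L_Q)+\dH^2(f_Q,\tilde f^L_Q),
\]
and then bounds each term by $\dkl(f_R\,\|\,\tilde f^L_S)\leq \Ep{R}{\phi_R(X)-\tilde\phi^L_S(X)}$, where the last inequality uses \citet[Remark~2.3]{dumbgen2011approximation} rather than the raw optimality of $\ell$. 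After regrouping, the two ``diagonal'' pieces $\Ep{R}{\phi_R-\tilde\phi^L_R}$ are exactly what Lemma~\ref{lem:Lip_maj} controls, and the two ``cross'' pieces already involve only the finite Lipschitz functions $\tilde\phi^L_S$ (whose normalizing constants then cancel). Your midpoint argument \emph{can} be repaired, but only by first applying the same triangle-inequality splitting and then running the midpoint comparison on each of the four pairs separately---at which point it becomes a variant of the paper's proof rather than a shortcut around it.
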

\begin{proof}[Proof of Corollary~\ref{cor:Lip_maj}]
Let $\tilde\phi^L_P,\tilde\phi^L_Q$ be defined as in~\eqref{eqn:normalize_Lip_maj}, and let $\tilde{f}^L_P,\tilde{f}^L_Q$ be the corresponding densities,
i.e., $\tilde{f}^L_P(x)=e^{\tilde\phi^L_P(x)}$ and similarly for $\tilde{f}^L_Q$.
We first calculate
\[\dkl(f_P || \tilde{f}^L_P) = \Ep{f_P}{\phi_P(X) - \tilde\phi^L_P(X)} \leq \Ep{P}{\phi_P(X) - \tilde\phi^L_P(X)}\]
and
\[\dkl(f_P || \tilde{f}^L_Q) = \Ep{f_P}{\phi_P(X) - \tilde\phi^L_Q(X)} \leq \Ep{P}{\phi_P(X) - \tilde\phi^L_Q(X)},\]
where the inequalities hold by \citet[Remark 2.3]{dumbgen2011approximation}. The same bounds hold with the roles of $P$ and $Q$ reversed.
Furthermore, by the triangle inequality,
\begin{align*}\dH^2(f_P,f_Q)
&=\frac{1}{2} \dH^2(f_P,f_Q) + \frac{1}{2}\dH^2(f_P,f_Q) \\
&\leq \frac{1}{2}\bigl\{\dH(f_P,\tilde{f}^L_P)+\dH(f_Q,\tilde{f}^L_P)\bigr\}^2 + \frac{1}{2}\bigl\{\dH(f_P,\tilde{f}^L_Q)+\dH(f_Q,\tilde{f}^L_Q)\bigr\}^2\\
 &\leq \dH^2(f_P,\tilde{f}^L_P) + \dH^2(f_Q,\tilde{f}^L_P) + \dH^2(f_P,\tilde{f}^L_Q) + \dH^2(f_Q,\tilde{f}^L_Q) \\
  &\leq \dkl(f_P||\tilde{f}^L_P) + \dkl(f_Q||\tilde{f}^L_P) +\dkl(f_P||\tilde{f}^L_Q) + \dkl(f_Q||\tilde{f}^L_Q),\end{align*}
where the last step holds  by the standard inequality relating KL divergence with Hellinger distance (i.e., $\dH^2 \leq \dkl$).
Combining all these calculations, and then rearranging terms, we see that\footnote{All expectations in this display are finite, because, e.g., $\sup_{x \in \mathbb{R}^d} \phi_P(x) = \sup_{x \in \mathbb{R}^d} \phi_P^L(x) < \infty$; moreover, $\Ep{P}{\phi_P^L(X)} \geq \Ep{P}{\phi_P(X)} > -\infty$ because $P \in \mathcal{P}_d$, and $\Ep{P}{\phi_Q^L(X)} > -\infty$ because $\phi_Q^L$ is Lipschitz and $P$ has a finite first moment.} 
\begin{align*}\dH^2(f_P,f_Q) &\leq
                                 \Ep{P}{\phi_P(X) - \tilde\phi^L_P(X)} + \Ep{Q}{\phi_Q(X) - \tilde\phi^L_P(X)} \\
  &\hspace{1in}+ \Ep{P}{\phi_P(X) - \tilde\phi^L_Q(X)} + \Ep{Q}{\phi_Q(X) - \tilde\phi^L_Q(X)}\\
&= 2\left(\Ep{P}{\phi_P(X) - \tilde\phi^L_P(X)} +\Ep{Q}{\phi_Q(X) - \tilde\phi^L_Q(X)} \right) \\&\hspace{1in}{}+ \left(\Ep{P}{\tilde\phi^L_P(X)} - \Ep{Q}{\tilde\phi^L_P(X)}\right) +  \left(\Ep{Q}{\tilde\phi^L_Q(X)}  - \Ep{P}{\tilde\phi^L_Q(X)}\right) \\
&= 2\left(\Ep{P}{\phi_P(X) - \tilde\phi^L_P(X)} +\Ep{Q}{\phi_Q(X) - \tilde\phi^L_Q(X)} \right) \\&\hspace{1in}{}+ \left(\Ep{P}{\phi^L_P(X)} - \Ep{Q}{\phi^L_P(X)}\right) +  \left(\Ep{Q}{\phi^L_Q(X)}  - \Ep{P}{\phi^L_Q(X)}\right) ,\end{align*}
where the last step holds since $\tilde\phi^L_P,\tilde\phi^L_Q$ are simply shifts of the functions $\phi^L_P,\phi^L_Q$, respectively.
Finally, applying Lemma~\ref{lem:Lip_maj} concludes the proof.
\end{proof}

\subsection{Completing the proof of Theorem~\ref{thm:main}}\label{sec:proof_thm:main}
We will now apply Corollary~\ref{cor:Lip_maj} to prove Theorem~\ref{thm:main}, bounding $\dH^2(f_P,f_Q)$ in terms of the Wasserstein distance. 
Define
\[L = \sqrt{\frac{8d}{r_db_d\min\{\eps_P,\eps_Q\} \dw(P,Q)} },\]
where $r_d, b_d \in (0,1]$ are taken from Lemma~\ref{lem:ball}.
Take a coupling $(X,Y)$ of $d$-dimensional random vectors with marginal distributions $X\sim P$ and $Y\sim Q$, such that $\EE{\norm{X-Y}} = \dw(P,Q)$, which is guaranteed to exist by \citet[][Theorem~4.1]{villani2008optimal}.
Then, since $\phi^L_P$ is $L$-Lipschitz, we have
\[\EE{\phi^L_P(X)} - \EE{\phi^L_P(Y)} \leq \EE{L\norm{X-Y}} = L\dw(P,Q),\]
and similarly
\[\EE{\phi^L_Q(Y)} - \EE{\phi^L_Q(X)} \leq L\dw(P,Q).\]
If $L\geq  \frac{2d}{r_d\min\{\eps_P,\eps_Q\}}$, then  applying Corollary~\ref{cor:Lip_maj}, we have 
\[\dH^2\bigl(\psi^*(P),\psi^*(Q)\bigr) \leq \frac{16d}{Lb_dr_d\min\{\eps_P,\eps_Q\}}+ 2L\dw(P,Q) =  \sqrt{\frac{128d\dw(P,Q)}{r_db_d\min\{\eps_P,\eps_Q\}}}.\]
If instead $L< \frac{2d}{r_d\min\{\eps_P,\eps_Q\}}$, then $ \frac{db_d\dw(P,Q)}{2r_d\min\{\eps_P,\eps_Q\}} > 1$.
Since Hellinger distance is always bounded by $\sqrt{2}$, we then have
\[\dH^2(\psi^*(P),\psi^*(Q)) \leq 2 \leq \sqrt{\frac{2db_d\dw(P,Q)}{r_d\min\{\eps_P,\eps_Q\}}} \leq \sqrt{\frac{2d\dw(P,Q)}{r_db_d\min\{\eps_P,\eps_Q\}}},\]
where the last step holds trivially since $b_d\leq 1$.
Thus, in either case, we have 
\[\dH^2\bigl(\psi^*(P),\psi^*(Q)\bigr) \leq \sqrt{\frac{128d}{r_db_d}} \cdot \sqrt{\frac{\dw(P,Q)}{\min\{\eps_P,\eps_Q\}}}.\]
We now split into cases. If $\dw(P,Q)\leq \max\{\eps_P,\eps_Q\}/4$, then
\[\frac{\dw(P,Q)}{\min\{\eps_P,\eps_Q\}} =\frac{\dw(P,Q)}{\max\{\eps_P,\eps_Q\} - |\eps_P-\eps_Q|} \leq\frac{\dw(P,Q)}{\max\{\eps_P,\eps_Q\} - 2\dw(P,Q)} \leq \frac{2\dw(P,Q)}{\max\{\eps_P,\eps_Q\} },\]
where the second step applies Proposition~\ref{prop:eps_P}. If instead $\dw(P,Q)> \max\{\eps_P,\eps_Q\}/4$
then we will instead use the trivial bound
\[
  \dH^2\bigl(\psi^*(P),\psi^*(Q)\bigr) \leq 2 \leq 4 \sqrt{\frac{\dw(P,Q)}{\max\{\eps_P,\eps_Q\}}} \leq 4\sqrt{\frac{d}{r_db_d}} \cdot \sqrt{\frac{\dw(P,Q)}{\max\{\eps_P,\eps_Q\}}}
\]
where the last step is trivial since $d\geq 1$ and $r_d,b_d\in(0,1]$.  Thus, in both cases, we have 
\[\dH^2\bigl(\psi^*(P),\psi^*(Q)\bigr) \leq 16\sqrt{\frac{d}{r_db_d}} \cdot \sqrt{\frac{\dw(P,Q)}{\max\{\eps_P,\eps_Q\}}}.\]
This proves the theorem, when we choose $C_d =4 \bigl(\frac{d}{r_db_d}\bigr)^{1/4}$.

\subsection{Completing the proof of Theorem~\ref{thm:main_empirical}: the case $d=1$}\label{sec:proof_1d}
Before proving the theorem, we first state several supporting lemmas. First we state a deterministic result:
\begin{lemma}\label{lem:1d}
  Let $P,Q\in\Pcal_1$ satisfy  $\max\{\Ep{P}{|X|^q}^{1/q},\Ep{Q}{|X|^q}^{1/q}\}\leq M_q$ for some $q>1$.  Define
  \begin{align*}
    \Delta_{\textnormal{CDF}}&(P,Q) \\
    &:= \max\left\{\sup_{t\in\R}\left|\sqrt{\Pp{P}{X>t}}-\sqrt{\Pp{Q}{X>t}}\right| , \sup_{t\in\R}\left|\sqrt{\Pp{P}{X<t}}-\sqrt{\Pp{Q}{X<t}}\right|\right\}.\end{align*}
Then
\[\dH^2\bigl(\psi^*(P),\psi^*(Q)\bigr)\leq C_*\sqrt{\frac{M_q}{\max\{\eps_P,\eps_Q\}}} \cdot \Big\{\Delta_{\textnormal{CDF}}(P,Q)\cdot\log\bigl(e/\Delta_{\textnormal{CDF}}(P,Q)\bigr)\Big\}^{1-1/q},\]
for a universal constant $C_* > 0$.
\end{lemma}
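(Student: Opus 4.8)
The proof will follow the same template as the proof of Theorem~\ref{thm:main}: apply Corollary~\ref{cor:Lip_maj} with a well-chosen Lipschitz parameter $L$, and bound the two ``cross terms'' $\Ep{P}{\phi^L_P(X)} - \Ep{Q}{\phi^L_P(X)}$ and $\Ep{Q}{\phi^L_Q(X)} - \Ep{P}{\phi^L_Q(X)}$. The key difference is that, in dimension $d=1$, we have access to more refined control: rather than bounding the cross terms via the Lipschitz constant times the Wasserstein distance, we can exploit the fact that for an $L$-Lipschitz concave function $\phi^L$ on $\R$, the difference $\Ep{P}{\phi^L(X)} - \Ep{Q}{\phi^L(X)}$ can be rewritten, via integration by parts (the ``layer-cake'' representation), as an integral against the difference of the tail functions $t \mapsto \Pp{P}{X > t} - \Pp{Q}{X > t}$ and $t \mapsto \Pp{P}{X < t} - \Pp{Q}{X < t}$. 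This is where $\Delta_{\textnormal{CDF}}(P,Q)$ enters.

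\textbf{Key steps.} First, fix $L \geq \frac{2d}{r_d \eps}$ with $\eps = \min\{\eps_P, \eps_Q\}$ (note $d=1$ here) and invoke Corollary~\ref{cor:Lip_maj}, so that it suffices to bound each cross term. Second, for the cross term $\Ep{P}{\phi^L_P(X)} - \Ep{Q}{\phi^L_P(X)}$, write $\phi := \phi^L_P$; since $\phi$ is concave and $L$-Lipschitz on $\R$, its derivative $\phi'$ is nonincreasing with $|\phi'| \leq L$, and
\[\Ep{P}{\phi(X)} - \Ep{Q}{\phi(X)} = \int_0^\infty \phi'(t)\bigl(\Pp{Q}{X \leq t} - \Pp{P}{X \leq t}\bigr)\,\mathsf{d}t + \int_{-\infty}^0 \phi'(t)\bigl(\Pp{P}{X \leq t} - \Pp{Q}{X \leq t}\bigr)\,\mathsf{d}t,\]
after subtracting the constant $\phi(0)$ (which does not affect the difference). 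We then split the range of integration at $\pm T$ for a truncation level $T$ to be chosen. On $|t| \leq T$: bound $|\phi'(t)| \leq L$ and the CDF difference by $2\sqrt{2}\,\Delta_{\textnormal{CDF}}(P,Q)$ (since $|a - b| = |\sqrt a - \sqrt b|\,|\sqrt a + \sqrt b| \leq 2|\sqrt a - \sqrt b|$ for $a,b \in [0,1]$, and each tail/lower-tail probability difference is controlled by $\Delta_{\textnormal{CDF}}$), contributing $O(LT \cdot \Delta_{\textnormal{CDF}})$. On $|t| > T$: use that $\phi$ is bounded above (so $\phi'(t) \to 0$) together with the moment assumption $\Ep{P}{|X|^q}, \Ep{Q}{|X|^q} \leq M_q^q$, which gives $\Pp{P}{|X| > t} + \Pp{Q}{|X| > t} \leq 2 M_q^q / t^q$ by Markov; combined with $|\phi'| \leq L$ this bounds the tail contribution by $O(L M_q^q / ((q-1) T^{q-1}))$. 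Third, optimize over $T$: balancing $LT\Delta_{\textnormal{CDF}}$ against $L M_q^q T^{-(q-1)}$ gives roughly $T \asymp (M_q^q / \Delta_{\textnormal{CDF}})^{1/q}$ and a combined cross-term bound of order $L M_q \Delta_{\textnormal{CDF}}^{1-1/q}$. Fourth, substitute into Corollary~\ref{cor:Lip_maj}, which now reads $\dH^2(f_P, f_Q) \lesssim \frac{1}{L \eps} + L M_q \Delta_{\textnormal{CDF}}^{1-1/q}$ (absorbing $r_d, b_d$ into universal constants since $d=1$), and optimize over $L$: choosing $L \asymp (\eps M_q)^{-1/2} \Delta_{\textnormal{CDF}}^{-(1-1/q)/2}$ yields $\dH^2(f_P,f_Q) \lesssim \sqrt{M_q/\eps}\cdot \Delta_{\textnormal{CDF}}^{(1-1/q)/2}$. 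Finally, check that this choice of $L$ satisfies the constraint $L \geq \frac{2}{r_1 \eps}$; if not, $\Delta_{\textnormal{CDF}}$ is bounded below and the trivial bound $\dH^2 \leq 2$ suffices, exactly as in the proof of Theorem~\ref{thm:main}. The logarithmic factor $\log(e/\Delta_{\textnormal{CDF}})$ arises from a more careful treatment of the $|t| \leq T$ integral: rather than bounding $|\phi'(t)| \leq L$ uniformly, one uses that $\int |\phi'(t)|\,\mathsf{d}t$ over the region where $\phi$ is still ``far from its max'' grows only logarithmically once we recall (via Lemma~\ref{lem:ball}) that $\phi^L_P$ is within a constant of its maximum on an interval of length of order $\eps$ — so $|\phi'|$ is effectively small except on a short interval, and carefully accounting for this replaces one power of $L$ by a $\log$ and sharpens the exponent on $\Delta_{\textnormal{CDF}}$ from $(1-1/q)/2$ to $1-1/q$.

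\textbf{Main obstacle.} The delicate point is obtaining the exponent $1-1/q$ on $\Delta_{\textnormal{CDF}}$ rather than the weaker $(1-1/q)/2$ that a naive two-term optimization in $L$ would give. This requires decoupling the roles played by $L$: the $\frac{1}{L\eps}$ term from Corollary~\ref{cor:Lip_maj} wants $L$ large, but the cross-term bound should \emph{not} simply scale linearly in $L$. The resolution is to bound $\int_{|t| \leq T} |\phi'(t)|\,\mathsf{d}t$ not by $2LT$ but by something like $\log(L T) + O(1)$ after using the ball lower bound of Lemma~\ref{lem:ball}: since $f_P = e^{\phi_P}$ is within a constant factor $b_1$ of its maximum throughout an interval of length $\asymp \eps$ centered at $\mu_P$, and $\phi^L_P \geq \phi_P$, the concave Lipschitz function $\phi^L_P$ can decrease by at most $O(\log(1/\text{small}))$ relative to its max before the density it defines has negligible mass — so the effective variation of $\phi^L_P$ over the region carrying all but a $\Delta_{\textnormal{CDF}}$-fraction of the mass is only logarithmic in the relevant parameters. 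Making this ``effective variation is logarithmic'' step rigorous — carefully tracking which interval of $t$-values actually contributes, and confirming the moment tail bound meshes with it — is the technical heart of the argument, and is presumably why the authors defer it to a separate subsection.
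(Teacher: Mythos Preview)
Your outline through the ``fourth step'' is sound and does recover a bound of the form $\sqrt{M_q/\eps}\cdot \Delta_{\textnormal{CDF}}^{(1-1/q)/2}$, but the mechanism you sketch for upgrading the exponent from $(1-1/q)/2$ to $1-1/q$ is not correct, and this is the heart of the lemma. The claim that $\int|\phi'(t)|\,\mathsf{d}t$ over the relevant region is only logarithmic is false as stated: $\phi^L_P$ is merely $L$-Lipschitz and concave, so over an interval of length $T$ its total variation can genuinely be of order $LT$. Lemma~\ref{lem:ball} tells you $\phi^L_P$ is near its maximum on an interval of length $\asymp\eps_P$, but gives no control on how fast it decays beyond that interval, so it does not by itself cap the variation. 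Consequently the spatial integration-by-parts bound $\int|\phi'(t)|\cdot|F_P(t)-F_Q(t)|\,\mathsf{d}t \lesssim LT\Delta$ is essentially sharp for your decomposition, and the optimization in $L$ cannot be improved along the lines you suggest.

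The paper's argument differs structurally. First, it reduces to the bounded-support case $P,Q$ supported on $[-R,R]$, and the moment condition enters only later, via a separate truncation step (using Theorem~\ref{thm:main} to control $\dH(\psi^*(P),\psi^*([P]_R))$). Second, and crucially, in the bounded case it applies the layer-cake representation to the \emph{values} of $\phi$ rather than to the spatial variable: writing $\Ep{P}{\phi^L_P}-\Ep{Q}{\phi^L_P}=\int_0^{2LR}\bigl(\Pp{Q}{M_\phi-\phi^L_P(X)\geq t}-\Pp{P}{M_\phi-\phi^L_P(X)\geq t}\bigr)\,\mathsf{d}t$. Since $\{\phi^L_P(x)\geq M_\phi-t\}$ is an interval, the integrand is controlled by $\Delta\sqrt{8}\cdot\sqrt{\Pp{P}{M_\phi-\phi_P(X)\geq t}}+2\Delta^2$. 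The key input you are missing is a tail bound on $\Pp{P}{M_\phi-\phi_P(X)\geq t}$: a separate lemma (Lemma~\ref{lem:phi_with_radius_bound}) shows this probability is $\lesssim R/(\eps_P t^2)$ for large $t$. Taking square roots gives a $1/t$ integrand, whose integral over $[c,2LR]$ is only $\log(L\eps_P)$---this is where the log actually originates. The resulting cross-term bound has the form $\Delta\sqrt{R/\eps}\log(L\eps)+LR\Delta^2$: the first piece is only logarithmic in $L$ and linear in $\Delta$, the second linear in $L$ but \emph{quadratic} in $\Delta$. That decoupling (linear-in-$\Delta$ term nearly independent of $L$; linear-in-$L$ term quadratic in $\Delta$) is what allows the optimization $L\asymp(\Delta\sqrt{R\eps})^{-1}$ to yield $\Delta\sqrt{R/\eps}\log(e/\Delta)$, after which optimizing the truncation level $R$ against the moment bound produces the claimed exponent $1-1/q$.
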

\noindent Next, in order to prove Theorem~\ref{thm:main_empirical}, we will want to apply this result with $Q=\Pemp$, i.e., we want to 
bound $\Delta_{\textnormal{CDF}}(\Pemp,P)$. 
Let $F$ denote the distribution function of $P$, and, for $t \in (0,1)$, let $F^{-1}(t) := \inf\{x:F(x) \geq t\}$.  Then, with $U\sim\textnormal{Unif}[0,1]$, we know that $F^{-1}(U)\sim P$. 
We may therefore assume that $X_1,\ldots,X_n$ are generated as $X_i=F^{-1}(U_i)$, where $U_1,\dots,U_n\iidsim\textnormal{Unif}[0,1]$. Since $F^{-1}$ is monotonic, we have
\begin{equation}\label{eqn:reduce_cdf_to_uniform}\Delta_{\textnormal{CDF}}(\Pemp,P)\leq \Delta_{\textnormal{CDF}}\bigl(\widehat{U}_n,\textnormal{Unif}[0,1]\bigr),\end{equation}
where $\widehat{U}_n$ is the empirical distribution of $U_1,\dots,U_n$.  Therefore, it suffices to consider the case that $P$ is the uniform distribution.
We now apply results from \citet{shorack2009empirical} to prove a tail bound on $\Delta_{\textnormal{CDF}}(\widehat{U}_n,\textnormal{Unif}[0,1])$.
\begin{lemma}\label{lem:cdf}
Fix any $n\geq 2$, and let
$\widehat{U}_n$ be the empirical distribution of $U_1,\dots,U_n\iidsim\textnormal{Unif}[0,1]$.
Then, for any $c>0$, 
\[\PP{\Delta_{\textnormal{CDF}}(\widehat{U}_n,\textnormal{Unif}[0,1]) \leq c'\sqrt{\frac{\log n}{n}}} \geq 1 - n^{-c},\]
where $c'> 0$ depends only on $c$.
\end{lemma}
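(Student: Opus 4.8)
The plan is to reduce the statement to a concentration bound for the uniform order statistics, and then invoke standard tail bounds. First I would note that, since the distribution functions of $\widehat{U}_n$ and $\textnormal{Unif}[0,1]$ are both supported on $[0,1]$, the two suprema defining $\Delta_{\textnormal{CDF}}(\widehat{U}_n,\textnormal{Unif}[0,1])$ may be taken over $t\in[0,1]$. Write $U_{(1)}\leq\dots\leq U_{(n)}$ for the order statistics, and set $U_{(0)}:=0$, $U_{(n+1)}:=1$. The left-continuous empirical distribution function $t\mapsto\Pp{\widehat{U}_n}{X<t}$ equals $k/n$ on $(U_{(k)},U_{(k+1)}]$, and since $t\mapsto|\sqrt{k/n}-\sqrt t|$ is minimized at $t=k/n$ it attains its supremum over any interval at an endpoint; this gives
\[
\sup_{t\in\R}\left|\sqrt{\Pp{\widehat{U}_n}{X<t}}-\sqrt{\Pp{\textnormal{Unif}[0,1]}{X<t}}\right|\leq \max_{1\leq k\leq n}\left|\sqrt{k/n}-\sqrt{U_{(k)}}\right|+\frac{1}{\sqrt n},
\]
where the extra $1/\sqrt n$ comes from comparing a value $\sqrt{(k-1)/n}$ against $\sqrt{U_{(k)}}$ rather than $\sqrt{k/n}$. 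Replacing $U_i$ by $1-U_i$ (which is again uniform on $[0,1]$) shows that the companion supremum, involving $\Pp{\cdot}{X>t}$, has the same distribution, so at the cost of a factor $2$ in the failure probability it suffices to bound $\max_{1\leq k\leq n}|\sqrt{k/n}-\sqrt{U_{(k)}}|$.

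Next I would apply the elementary inequality $|\sqrt a-\sqrt b|=|a-b|/(\sqrt a+\sqrt b)\leq |a-b|/\sqrt{\max\{a,b\}}$ to obtain $|\sqrt{k/n}-\sqrt{U_{(k)}}|\leq |U_{(k)}-k/n|/\sqrt{k/n}$, and then show that $|U_{(k)}-k/n|\leq K\sqrt{k\log n}\,/n$ simultaneously for all $k$ with probability at least $1-n^{-c}$, for a constant $K$ depending only on $c$; multiplying through by $\sqrt{n/k}$ then yields $\max_k|\sqrt{k/n}-\sqrt{U_{(k)}}|\leq K\sqrt{(\log n)/n}$, and hence the claimed bound with $c'$ a function of $c$. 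For the per-$k$ statement I would use the identity $\PP{U_{(k)}\leq s}=\PP{\mathrm{Bin}(n,s)\geq k}$ together with multiplicative Chernoff/Bernstein bounds for the binomial, which give a failure probability of at most $n^{-K^2/4}$ for each fixed $k$; a union bound over $k\in\{1,\dots,n\}$ costs only the advertised logarithmic factor. This is exactly the kind of weighted estimate recorded for the uniform empirical and quantile processes in \citet{shorack2009empirical}, and one could alternatively quote such an inequality directly in place of the order-statistic computation. For the finitely many small $n$ at which the manipulations above do not apply cleanly, one simply uses $\Delta_{\textnormal{CDF}}\leq 1\leq c'\sqrt{(\log n)/n}$, valid for $c'$ large since $\sqrt{(\log n)/n}$ is bounded below on any finite range of $n\geq 2$.

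The main obstacle is the behaviour near $t=0$, i.e.\ the terms with $k$ of order $\log n$ or smaller: there $k/n$ is comparable to the deviation scale $\sqrt{k\log n}\,/n$, so the Bernstein bound on the upper tail of $U_{(k)}$ degrades (its exponent becomes of order $\sqrt{\log n}$ rather than $\log n$) and a term-by-term union bound fails. I would handle these indices together instead. On one side the requirement $U_{(k)}\geq k/n-K\sqrt{k\log n}\,/n$ is vacuous once $k$ is of order $\log n$ or smaller, since its right-hand side is then negative; on the other side it suffices to show $U_{(\lceil C\log n\rceil)}\leq C'(\log n)/n$ with probability $1-n^{-c}$ for suitable constants, which follows from a single multiplicative Chernoff bound applied to $\mathrm{Bin}(n,C'(\log n)/n)$, because $U_{(k)}\leq U_{(\lceil C\log n\rceil)}$ for all the relevant $k$. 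With the small-$k$ range dispatched in this way and the Bernstein bounds used for $k$ larger than a constant multiple of $\log n$, the union bound closes and the stated tail bound follows.
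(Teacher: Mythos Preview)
Your argument is correct and follows the same overall architecture as the paper's proof: both split off a boundary region where $t$ (equivalently, the order index $k$) is of size $O((\log n)/n)$ and handle it via the trivial bound $|\sqrt a-\sqrt b|\leq\sqrt a+\sqrt b$, and both treat the main range via $|\sqrt a-\sqrt b|\leq|a-b|/\sqrt a$ together with a weighted deviation inequality for the uniform empirical process. The differences are largely cosmetic. The paper works with the continuous parameter $t$, partitions $[0,1]$ into $[0,(\log n)/n]$, $[(\log n)/n,1-(\log n)/n]$ and $[1-(\log n)/n,1]$, and for the middle range quotes directly the inequality for $\sup_{t}|\widehat U_n(t)-t|/\sqrt t$ from \citet{shorack2009empirical}; its boundary step is slightly slicker than yours, since the bound $\sqrt{\widehat U_n((\log n)/n)}\leq\sqrt{(\log n)/n}+\Delta_{1,1}$ folds the boundary contribution back into the middle-range quantity, so no separate Chernoff argument is needed there. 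Your version instead discretises to order statistics and re-derives the weighted bound from binomial tail inequalities, which is more self-contained but otherwise equivalent---and indeed you note that one could alternatively cite the Shorack--Wellner inequality, which is exactly what the paper does.
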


With these lemmas in place, we are now in a position to prove Theorem~\ref{thm:main_empirical}.
Let $M_{q,n} = \left(\frac{1}{n}\sum_{i=1}^n |X_i|^q\right)^{1/q}$ and $\Delta = \Delta_{\textnormal{CDF}}(\Pemp,P)$.
If $\Pemp\in\Pcal_1$ (that is, $\Pemp$ does not place all its mass on a single point), then we have
\begin{equation}\label{eqn:dH_emp_for_d_equals_1}\dH^2\bigl(\psi^*(\Pemp),\psi^*(P)\bigr)\leq\min\biggl\{2,C_*\sqrt{\frac{\max\{M_q,M_{q,n}\}}{\max\{\eps_P,\eps_{\Pemp}\}}} \cdot \big(\Delta\log(e/\Delta)\big)^{1-1/q}\biggr\}\end{equation}
by applying Lemma~\ref{lem:1d}  with $Q=\Pemp$.
On the other hand, if $\Pemp$ does place all its mass on one point, then recall that $\psi^*(\Pemp)$ is not defined but
we take $\dH^2\bigl(\psi^*(\Pemp),\psi^*(P)\bigr) = 2$ by convention. For this case, we can trivially calculate
\[\Delta \geq  \min\bigl\{\sqrt{\Pp{P}{X>\mu_P}},\sqrt{\Pp{P}{X<\mu_P}}\bigr\}.\]
We will now need an additional lemma:
\begin{lemma}\label{lem:prob_above_below_mu}
Fix any $P\in\Pcal_1$ and any $q > 1$. Suppose $M_q = \Ep{P}{|X|^q}^{1/q}<\infty$. Then
\[\min\left\{\Pp{P}{X>\mu_P},\Pp{P}{X<\mu_P}\right\}\geq \left(\frac{\eps_P}{4M_q}\right)^{\frac{q}{q-1}}.\]
\end{lemma}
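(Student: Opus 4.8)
The plan is to reduce the statement to a single application of H\"older's inequality. The key preliminary observation is that in dimension $d=1$, the unit sphere $\mathbb{S}_0$ consists of the two points $\pm 1$, so that
\[\eps_P = \Ep{P}{|X-\mu_P|} = \Ep{P}{(X-\mu_P)_+} + \Ep{P}{(X-\mu_P)_-}.\]
Since $\mu_P = \Ep{P}{X}$ is finite (as $M_q < \infty$ and $q > 1$ imply $\Ep{P}{|X|} < \infty$), we have $\Ep{P}{(X-\mu_P)_+} - \Ep{P}{(X-\mu_P)_-} = \Ep{P}{X-\mu_P} = 0$, and therefore
\[\Ep{P}{(X-\mu_P)_+} = \Ep{P}{(X-\mu_P)_-} = \frac{\eps_P}{2}.\]

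Next I would lower-bound $p_+ := \Pp{P}{X > \mu_P}$ (the argument for $p_- := \Pp{P}{X < \mu_P}$ being identical). Writing $(X-\mu_P)_+ = (X-\mu_P)\One{X>\mu_P}$ and applying H\"older's inequality with conjugate exponents $q$ and $q/(q-1)$,
\[\frac{\eps_P}{2} = \Ep{P}{(X-\mu_P)\One{X>\mu_P}} \leq \Ep{P}{|X-\mu_P|^q}^{1/q}\cdot \Pp{P}{X>\mu_P}^{1-1/q} = \Ep{P}{|X-\mu_P|^q}^{1/q}\cdot p_+^{\,\frac{q-1}{q}}.\]
To convert the centred moment into $M_q$, use the triangle inequality in $L^q(P)$ together with the fact that $|\mu_P| = |\Ep{P}{X}| \leq \Ep{P}{|X|} \leq \Ep{P}{|X|^q}^{1/q} = M_q$ (the last step by Jensen's inequality, since $q>1$), giving
\[\Ep{P}{|X-\mu_P|^q}^{1/q} \leq \Ep{P}{|X|^q}^{1/q} + |\mu_P| \leq 2M_q.\]
Combining the last two displays yields $\eps_P/2 \leq 2M_q\, p_+^{(q-1)/q}$, i.e.\ $p_+ \geq (\eps_P/(4M_q))^{q/(q-1)}$, and the same bound holds for $p_-$, which completes the proof.

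There is no real obstacle here: the only points requiring a moment of care are (i) justifying finiteness of $\mu_P$ and the resulting equality of the positive and negative first absolute moments, and (ii) passing from the centred $q$-th moment to $M_q$ without losing more than the constant factor $2$ (hence the $4$ in the statement). Both are handled by the elementary inequalities above.
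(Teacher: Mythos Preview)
Your proof is correct and follows essentially the same approach as the paper: both use $\eps_P = 2\Ep{P}{(X-\mu_P)_+}$, apply H\"older's inequality to bound the positive part by $\Ep{P}{|X-\mu_P|^q}^{1/q}\,p_+^{(q-1)/q}$, and then use the $L^q$ triangle inequality together with $|\mu_P|\leq M_q$ to replace the centred moment by $2M_q$.
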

\noindent This implies
\[\Delta \geq \left(\frac{\eps_P}{4M_q}\right)^{\frac{q}{2(q-1)}} \]
for the case where $\Pemp\not\in\Pcal_1$ (i.e., $\Pemp$ is supported on a single point). Since also $\Delta\leq1$ by definition, this means that
\[ \sqrt{\frac{\max\{M_q,M_{q,n}\}}{\eps_P}} \cdot \big(\Delta\log(e/\Delta)\big)^{1-1/q} \geq \frac{1}{2} = \frac{\dH^2\bigl(\psi^*(\Pemp),\psi^*(P)\bigr)}{4}.\]
Combining this with~\eqref{eqn:dH_emp_for_d_equals_1} for the case $\Pemp\in\Pcal_1$, we see that 
\[\dH^2\bigl(\psi^*(\Pemp),\psi^*(P)\bigr)\leq\min\biggl\{2,\max\{C_*,4\}\sqrt{\frac{\max\{M_q,M_{q,n}\}}{\eps_P}} \cdot \big(\Delta\log(e/\Delta)\big)^{1-1/q}\biggr\}\]
holds for both cases.

Next, we will combine this calculation with Lemma~\ref{lem:cdf}, applied with $c= 1/2$.  Let $c'$ be the constant from Lemma~\ref{lem:cdf}. First, if $c'\sqrt{\frac{\log n}{n}}>1$, 
then 
\begin{multline*}\EE{\dH^2\bigl(\psi^*(\Pemp),\psi^*(P)\bigr)}\leq 2 \leq 2\biggl(c'\sqrt{\frac{\log n}{n}}\biggr)^{1-1/q} \leq 
\frac{2c'^{1-1/q}}{(\log 2)^{1-1/q}} \frac{\log^{\frac{3}{2}(1-1/q)}n}{n^{\frac{1}{2}-\frac{1}{2q}}}\\
\leq \frac{2c'^{1-1/q}}{(\log 2)^{1-1/q}} \cdot \sqrt{\frac{2M_q}{\eps}}\cdot \frac{\log^{\frac{3}{2}(1-1/q)}n}{n^{\frac{1}{2}-\frac{1}{2q}}},\end{multline*}
where the last step holds since
\begin{equation}\label{eqn:eps_vs_Mq}\eps_P = \Ep{P}{|X-\mu_P|} \leq \Ep{P}{|X|} + |\mu_P| \leq 2\Ep{P}{|X|} \leq 2\bigl\{\Ep{P}{|X|^q}\bigr\}^{1/q} \leq 2M_q.
\end{equation}
If instead $c'\sqrt{\frac{\log n}{n}} \leq 1$, then we have
\begin{align*}
&\EE{\dH^2\bigl(\psi^*(\Pemp),\psi^*(P)\bigr)}\\
&\leq \EE{ \min\biggl\{2,\max\{C_*,4\}\sqrt{\frac{\max\{M_q,M_{q,n}\}}{\max\{\eps_P,\eps_{\Pemp}\}}} \cdot \big(\Delta\log(e/\Delta)\big)^{1-1/q}\biggr\}}\\
&\leq 2\PP{\Delta>c'\sqrt{\frac{\log n}{n}}} + \EE{\max\{C_*,4\}\sqrt{\frac{M_q+M_{q,n}}{\eps_P}}\cdot \Biggl\{c'\sqrt{\frac{\log n}{n}}\log\Biggl(\frac{e}{c'\sqrt{\frac{\log n}{n}}}\Biggr)\Biggr\}^{1-1/q}}\\
&\leq 2n^{-1/2} + \max\{C_*,4\}\sqrt{\frac{M_q+\EE{M_{q,n}}}{\eps_P}}\cdot \Biggl\{c'\sqrt{\frac{\log n}{n}}\log\Biggl(\frac{e}{c'\sqrt{\frac{\log n}{n}}}\Biggr)\Biggr\}^{1-1/q}\\
&\leq 2n^{-1/2} + \max\{C_*,4\}\sqrt{\frac{2M_q}{\eps_P}}\cdot \Biggl\{c'\sqrt{\frac{\log n}{n}}\log\Biggl(\frac{e}{c'\sqrt{\frac{\log n}{n}}}\Biggr)\Biggr\}^{1-1/q}\\
&\leq \sqrt{\frac{2M_q}{\eps_P}}\cdot \left[2n^{-1/2} + \max\{C_*,4\}\Biggl\{c'\sqrt{\frac{\log n}{n}}\log\Biggl(\frac{e}{c'\sqrt{\frac{\log n}{n}}}\Biggr)\Biggr\}^{1-1/q}\right],
\end{align*}
where the third-to-last step applies Jensen's inequality,
the second-to-last step holds  because $\EE{M_{q,n}}\leq M_q$, and the last step holds by~\eqref{eqn:eps_vs_Mq}.
After simplifying, we obtain
 \[\EE{\dH^2\bigl(\psi^*(\Pemp),\psi^*(P)\bigr)}\leq C_{1,q}\sqrt{\frac{M_q}{\eps_P}}\cdot \frac{\log^{\frac{3}{2}(1-1/q)} n}{n^{\frac{1}{2}-\frac{1}{2q}}}\]
for all $n \geq 2$ when $C_{1,q}$ is chosen appropriately. This completes the proof of Theorem~\ref{thm:main_empirical} for the case $d=1$.

\appendix
\section{Additional proofs}\label{app:proofs}

\subsection{Proof of Proposition~\ref{prop:eps_P}}
First fix any distribution $P$ on $\R^d$ with $\Ep{P}{\norm{X}}<\infty$.  Observe that $u\mapsto\Ep{P}{|u^\top(X-\mu_P)|}$ is a continuous function on $\mathbb{S}_{d-1}$, since for any $u,v\in\mathbb{S}_{d-1}$, we have 
\begin{multline*}\left|\Ep{P}{|u^\top(X-\mu_P)|} - \Ep{P}{|v^\top(X-\mu_P)|}\right| \leq \Ep{P}{\big|(u-v)^\top (X-\mu_P)\big|}\\ \leq \norm{u-v} \cdot \Ep{P}{\norm{X-\mu_P}}
 \leq \norm{u-v} \cdot 2\Ep{P}{\norm{X}},\end{multline*}
and $\Ep{P}{\norm{X}}<\infty$ by assumption.
Therefore, $u \mapsto \Ep{P}{ \big|u^\top(X-\mu_P)\big|}$ must attain its infimum, that is,
\[\eps_P =  \inf_{u\in\mathbb{S}_{d-1}} \Ep{P}{ \big|u^\top(X-\mu_P)\big|} =  \Ep{P}{ \big|u_0^\top(X-\mu_P)\big|} \]
for some $u_0\in\mathbb{S}_{d-1}$. 

Next suppose $P\in\Pcal_d$. We will show that $\eps_P>0$. As above, we have $\eps_P=  \Ep{P}{ \big|u_0^\top(X-\mu_P)\big|}$
for some $u_0\in\mathbb{S}_{d-1}$. If $\eps_P=0$, then this implies that $u_0^\top(X-\mu_P)=0$ with probability $1$, meaning that $P$ places all its mass on a single hyperplane $H = \{x \in\R^d: u_0^\top x = u_0^\top \mu_P\}$.
This contradicts the assumption $P\in\Pcal_d$, thus proving the first claim.

Finally, consider distributions $P,Q$ on $\R^d$ with $\Ep{P}{\norm{X}},\Ep{Q}{\norm{X}}<\infty$. By \citet[][Theorem~4.1]{villani2008optimal}, we can find a pair of $d$-dimensional random vectors $X$ and $Y$ such that marginally $X\sim P$, $Y\sim Q$ and $\EE{\norm{X-Y}}=\dw(P,Q)$. Let $u_0$ be defined as above, so that $\eps_P=  \EE{ \big|u_0^\top(X-\mu_P)\big|}$.  Then
\begin{align*}
\eps_Q - \eps_P
&= \inf_{u\in\mathbb{S}_{d-1}} \EE{ \big|u^\top(Y-\mu_Q)\big|} - \EE{ \big|u_0^\top(X-\mu_P)\big|}\\
&\leq  \EE{ \big|u_0^\top(Y-\mu_Q)\big|} - \EE{ \big|u_0^\top(X-\mu_P)\big|}\\
&\leq \EE{\big|u_0^\top(X-Y)\big|} + \big|u_0^\top (\mu_P - \mu_Q)\big|\\
&\leq \EE{\norm{X-Y}} + \norm{\mu_P-\mu_Q}\\
&\leq 2\EE{\norm{X-Y}}\\
&=2\dw(P,Q).
\end{align*}
An identical argument proves the reverse bound, and we deduce that $|\eps_P-\eps_Q|\leq 2\dw(P,Q)$, as desired.

\subsection{Proof of Lemma~\ref{lem:1stmoment}}
Let $\Sigma = \textnormal{Cov}_f(X)$ and define an isotropic log-concave density $g$ on $\R^d$ by $g(x) = f(\Sigma^{1/2}x +\mu_P) \det^{1/2}(\Sigma)$. Note that, if $X\sim f$, then $\Sigma^{-1/2}(X-\mu_P)\sim g$.  Hence
\begin{align*}\Ep{f}{|v^\top (X-\mu_P)|} &= \Ep{f}{|(\Sigma^{1/2}v)^\top (\Sigma^{-1/2}(X-\mu_P))|} = \Ep{g}{ |(\Sigma^{1/2}v)^\top X|}\\
&\geq \frac{1}{4} \big(\Ep{g}{((\Sigma^{1/2}v)^\top X)^2}\big)^{1/2} = \frac{1}{4} \norm{\Sigma^{1/2}v},\end{align*}
where the inequality applies \citet[Theorem 5.22]{lovasz2007geometry}, and the last step holds because $g$ is isotropic.

Next, define a distribution $Q$ obtained by drawing $X\sim P$ and then taking the affine transformation $\Sigma^{-1/2}(X-\mu_P)$.
By definition of $Q$, we have
\[\Ep{P}{|v^\top (X-\mu_P)|} = \Ep{P}{|(\Sigma^{1/2}v)^\top (\Sigma^{-1/2}(X-\mu_P))|}  = \Ep{Q}{ |(\Sigma^{1/2}v)^\top X|}\\
\leq \norm{\Sigma^{1/2}v}\cdot\Ep{Q}{\norm{X}}.\]
Since  log-concave projection commutes with affine transformations, we have
\[ \psi^*(Q) = g,\]
which is an isotropic log-concave density.  Lemma~\ref{lem:isotropic} below establishes that $\Ep{Q}{\norm{X}} \leq a_d$, where $a_d > 0$ depends only on $d$.
Therefore, we have proved that, for any $v\in\R^d$,
\[\Ep{P}{|v^\top (X-\mu_P)|}  \leq \norm{\Sigma^{1/2}v}\cdot a_d\]
while
\[\Ep{f}{|v^\top (X-\mu_P)|}  \geq \frac{1}{4} \norm{\Sigma^{1/2}v}.\]
Setting $c_d = \frac{1}{4a_d}$ establishes the desired result.

\subsubsection{Supporting lemma for Lemma~\ref{lem:1stmoment}}
\begin{lemma}\label{lem:isotropic}
There exists $a_d > 0$, depending only on $d$,
such that, for any isotropic log-concave density $f$ on $\R^d$ and any $P\in\Pcal_d$ with $\psi^*(P)=f$,
\[\Ep{P}{\norm{X}} \leq a_d.\]
\end{lemma}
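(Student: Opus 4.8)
The plan is to use the \emph{global} optimality of $\widehat\phi := \phi^*(P) = \log f$ as a maximiser of $\ell(\cdot,P)$ over $\Phi_d$, rather than only its linearised consequences (such as the convex-ordering property~\eqref{eqn:convexorder}, which can bound moments of $P$ only from below). The idea is to compare $\widehat\phi$ against a single, very spread-out log-concave competitor whose log-density decays strictly more slowly than the uniform exponential envelope that an \emph{isotropic} log-concave density must satisfy; this mismatch in decay rates turns the optimality inequality into an \emph{upper} bound on $\Ep{P}{\norm{X}}$.

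In detail, I would argue as follows. Since $f = \psi^*(P)$ is isotropic and log-concave, there exist $\alpha_d \in \R$ and $\beta_d > 0$, depending only on $d$, such that $\log f(x) \le \alpha_d - \beta_d\norm{x}$ for all $x \in \R^d$; this is a standard property of isotropic log-concave densities \citep[see, e.g.,][]{lovasz2007geometry}, and in any case follows by combining a uniform bound on $\sup_x f(x)$ with the exponential tail decay of $f$. Because $\int_{\R^d} e^{\widehat\phi(x)}\,\mathsf{d}x = \int_{\R^d} f(x)\,\mathsf{d}x = 1$, and because $\Ep{P}{\widehat\phi(X)} > -\infty$ (as $P \in \Pcal_d$) while $\Ep{P}{\norm{X}} < \infty$, it follows that
\[\ell(\widehat\phi,P) = \Ep{P}{\widehat\phi(X)} \le \alpha_d - \beta_d\,\Ep{P}{\norm{X}}.\]
Next, put $b_d := \beta_d/2$ and $c_d := -\log\bigl(\int_{\R^d} e^{-b_d\norm{x}}\,\mathsf{d}x\bigr)$, a finite constant depending only on $d$, so that $\phi'(x) := c_d - b_d\norm{x}$ is the log-density of a bona fide log-concave probability density on $\R^d$; clearly $\phi' \in \Phi_d$, so $\ell(\phi',P) = \Ep{P}{\phi'(X)} = c_d - b_d\,\Ep{P}{\norm{X}}$. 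By the optimality of $\widehat\phi$ over $\Phi_d$ \citep[Theorem~2.2]{dumbgen2011approximation} we have $\ell(\widehat\phi,P) \ge \ell(\phi',P)$, and combining the previous two displays gives $\alpha_d - \beta_d\,\Ep{P}{\norm{X}} \ge c_d - b_d\,\Ep{P}{\norm{X}}$, i.e.
\[\frac{\beta_d}{2}\,\Ep{P}{\norm{X}} = (\beta_d - b_d)\,\Ep{P}{\norm{X}} \le \alpha_d - c_d.\]
Specialising to $P = f$ (which lies in $\Pcal_d$, satisfies $\psi^*(P) = f$, and has $\Ep{P}{\norm{X}} = \Ep{f}{\norm{X}} > 0$) shows $\alpha_d - c_d > 0$, and hence Lemma~\ref{lem:isotropic} holds with $a_d := 2(\alpha_d - c_d)/\beta_d > 0$, which depends only on $d$.

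I expect the only non-routine ingredient to be the conceptual point in the first paragraph: one cannot extract an \emph{upper} bound on a moment of $P$ from the first-order optimality conditions alone (these always favour $P$ being ``more spread out'' than $f$), and the remedy is to test the full optimality against a slowly-decaying competitor. The hypothesis that $f$ is isotropic enters exactly once, and crucially, through the \emph{uniform} envelope $\log f(x) \le \alpha_d - \beta_d\norm{x}$, which has no counterpart for general log-concave $f$ (where the scale is arbitrary). The remaining steps---verifying $\phi' \in \Phi_d$ so that the optimality comparison is legitimate, and checking finiteness of $\Ep{P}{\widehat\phi(X)}$ and $\Ep{P}{\phi'(X)}$---are routine and follow from $P \in \Pcal_d$ together with facts already recorded above.
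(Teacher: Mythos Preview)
Your proof is correct and follows the same overall strategy as the paper's: upper bound $\Ep{P}{\log f(X)}$ via the uniform exponential envelope for isotropic log-concave densities, lower bound it by comparing against a radial log-concave competitor using the optimality of $\phi^*(P)$, and combine.

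The difference lies in the choice of competitor. The paper uses a competitor whose scale is \emph{adapted} to $m := \Ep{P}{\norm{X}}$, namely $g(x) \propto \exp\{-d\norm{x}/m\}$, which leads to an inequality of the form $\beta_d - \alpha_d m \geq C_d - d\log m$; this bounds $m$ because linear growth dominates logarithmic growth, but the resulting constant is implicit. You instead use a \emph{fixed} competitor with decay rate exactly half that of the envelope, $\phi'(x) = c_d - (\beta_d/2)\norm{x}$, which produces a purely linear inequality and yields the explicit constant $a_d = 2(\alpha_d - c_d)/\beta_d$ directly. Your argument is slightly cleaner in this respect, and your observation that one can verify $\alpha_d - c_d > 0$ by specialising to $P$ with density $f$ is a nice touch (it can also be checked directly from $1 = \int f \leq e^{\alpha_d}\int e^{-\beta_d\norm{x}}\,\mathsf{d}x < e^{\alpha_d}\int e^{-(\beta_d/2)\norm{x}}\,\mathsf{d}x = e^{\alpha_d - c_d}$). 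Both routes are equally valid; yours is marginally more elementary.
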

\begin{proof}[Proof of Lemma~\ref{lem:isotropic}]
By \citet[Lemma 13]{fresen2013multivariate}, since $f$ is an isotropic log-concave density, it holds that
\[f(x) \leq e^{\beta_d - \alpha_d \norm{x}}\textnormal{ for all $x\in\R^d$},\]
where $\alpha_d>0$ and $\beta_d \in \mathbb{R}$ depend only on $d$.
We can therefore calculate
\[\Ep{P}{\log f(X)} \leq \Ep{P}{\beta_d - \alpha_d \norm{X}} = \beta_d - \alpha_d\Ep{P}{\norm{X}}.\]
On the other hand, consider the log-concave density 
\[g(x) =    \biggl(\frac{d^d}{\Ep{P}{\norm{X}}^d(d-1)!S_{d-1}}\biggr) \cdot \exp\biggl\{-\frac{d\norm{x}}{\Ep{P}{\norm{X}}}\biggr\},\]
where $S_{d-1}$ denotes the surface area of the unit sphere $\mathbb{S}_{d-1}$ in $\R^d$ (with $S_0=2$).
We have
\[\Ep{P}{\log g(X)} = \log\biggl(\frac{d^d}{\Ep{P}{\norm{X}}^d(d-1)!S_{d-1}}\biggr) - d.\]
But, since $f = \psi^*(P)$, it must hold that
\[\Ep{P}{\log f(X)} \geq \Ep{P}{\log g(X)},\]
and so
\[ \beta_d - \alpha_d\Ep{P}{\norm{X}} \geq  \log\biggl(\frac{(d/e)^d}{(d-1)!S_{d-1}}\biggr) - d\log \Ep{P}{\norm{X}}.\]
The result follows.
\end{proof}

\subsection{Proof of Lemma~\ref{lem:Lip_maj}}\label{sec:proof_Lip_lemma}
We will prove below that, when $L\geq \frac{2d}{r_d\eps_P}$, the function $\phi^L(x) =\sup_{y \in \mathbb{R}^d}\{\phi(x)-L\norm{x-y}\}$ satisfies
\begin{equation}\label{eqn:Lip_ball_inflate}\int_{\R^d}e^{\phi^L(x)}\;\mathsf{d}x  \leq 1 + \frac{4d}{Lb_dr_d\eps_P}.\end{equation}
Assuming this holds, we then have
\begin{multline*}\ell(\phi^L,P) = \Ep{P}{\phi^L(X)} - \int_{\R^d}e^{\phi^L(x)}\;\mathsf{d}x + 1 \geq  \Ep{P}{\phi^L(X)} - \frac{4d}{Lb_dr_d\eps_P}\\
 \geq  \Ep{P}{\phi(X)} - \frac{4d}{Lb_dr_d\eps_P} = \ell(\phi,P) - \frac{4d}{Lb_dr_d\eps_P},\end{multline*}
 where the last inequality holds since $\phi^L\geq \phi$ pointwise.
Finally, normalizing to $\tilde\phi^L$ can only improve the objective function, since
\[\ell(\tilde\phi^L,P) =  \Ep{P}{\tilde\phi^L(X)}  =  \Ep{P}{\phi^L(X)} - \log\left(\int_{\R^d}e^{\phi^L(x)}\;\mathsf{d}x\right) \geq  \ell(\phi^L,P),\]
because $\log t \leq t-1$ for all $t>0$.

From this point on, we only need to prove~\eqref{eqn:Lip_ball_inflate} in order to complete the proof of the lemma.
For any $x\in\R^d$, we will write $y_x$ to denote a point attaining the supremum, i.e., $\phi^L(x) = \phi(y_x) - L\norm{x-y_x}$
(Lemma~\ref{lem:sup} below verifies the existence and measurability of such a map $x\mapsto y_x$).

We now derive the desired bound~\eqref{eqn:Lip_ball_inflate}. We have
\begin{align*}
\int_{\R^d}e^{\phi^L(x)}\;\mathsf{d}x
&=\int_{\R^d}e^{\phi(y_x)}\cdot e^{ - L\norm{x-y_x}}\;\mathsf{d}x\\
&=\int_{\R^d}\biggl(\int_{-\infty}^{\phi(y_x)}e^t\;\mathsf{d}t \biggr)\cdot \biggl(\int_{L\norm{x-y_x}}^{\infty} e^{-s}\;\mathsf{d}s\biggr)\;\mathsf{d}x\\
&=\int_{-\infty}^{M_\phi}\int_{0}^{\infty}e^{t-s} \biggl(\int_{\R^d} \One{\phi(y_x)\geq t, \norm{x-y_x}\leq s/L}\;\mathsf{d}x\biggr)\;\mathsf{d}s\;\mathsf{d}t,
\end{align*}
where the last step follows by Fubini's theorem, and where
 $M_\phi=\sup_{x\in\R^d}\phi(x)$
(note that we must have $M_\phi<\infty$ by definition of $\Phi_d$).  We now examine this indicator function.
For $t \in \mathbb{R}$ define the super-level set $D_t= \{x : \phi(x)\geq  t\}$. Note that $D_t$ is convex for any $t$ by concavity of $\phi$,
and furthermore is bounded since $\phi$ is a log-density. Moreover, we can observe
that $D_t$ has non-empty interior for any $t<M_\phi$, since $\phi$ is concave and is a log-density.

Now, for any compact, convex set $C \subseteq \R^d$ and any $\delta>0$, define  the $\delta$-neighborhood of $C$ by
\[\textnormal{Nbd}(C,\delta) := \{x\in\R^d: \textnormal{dist}(x,C)\leq \delta\},\]
where $\textnormal{dist}(x,C) := \min_{y\in C}\norm{x-y}$.  (If $C$ is the empty set, then this neighborhood is also defined to be the empty set.)  
If $x \in \R^d$ is such that $\phi(y_x)\geq t$, then $y_x\in D_t$, and if, furthermore, $\norm{x-y_x}\leq  s/L$, then
\[x\in\textnormal{Nbd}(D_t,s/L).\]
Hence,
\[\int_{\R^d}e^{\phi^L(x)}\;\mathsf{d}x \leq \int_{-\infty}^{M_\phi}\int_{0}^{\infty}e^{t-s}\cdot \Leb_d\Big(\textnormal{Nbd}(D_t, s/L)\Big)\;\mathsf{d}s\;\mathsf{d}t.\]
 On the other hand, we have
\begin{align}
\int_{-\infty}^{M_\phi}\int_{0}^{\infty}e^{t-s}\cdot \Leb_d(D_t)\;\mathsf{d}s\;\mathsf{d}t
\notag&=\int_{-\infty}^{M_\phi}e^t\cdot \Leb_d(D_t)\;\mathsf{d}t \nonumber 
=\int_{-\infty}^{M_\phi}e^t \biggl(\int_{\R^d} \One{\phi(x)\geq t}\;\mathsf{d}x\biggr)\;\mathsf{d}t \\
\label{eqn:t_integral_no_inflate}&=\int_{\R^d}\int_{-\infty}^{\phi(x)} e^t\;\mathsf{d}t\;\mathsf{d}x
=\int_{\R^d}e^{ \phi(x)}\;\mathsf{d}x = 1,
\end{align}
by again applying Fubini's theorem. Therefore, to prove~\eqref{eqn:Lip_ball_inflate}, we only need to show that
\begin{equation}\label{eqn:Lip_ball_inflate_diff}
\int_{-\infty}^{M_\phi}\int_{0}^{\infty}e^{t-s}\cdot \Leb_d\Big(\textnormal{Nbd}(D_t, s/L)\backslash D_t\Big)\;\mathsf{d}s\;\mathsf{d}t
\leq  \frac{4d}{Lb_dr_d\eps_P}.\end{equation}

 Next we will use a basic result about neighborhoods of convex sets---Lemma~\ref{lem:convex_inflate}
verifies that 
\[\delta\mapsto \frac{\Leb_d\big(\textnormal{Nbd}(C,\delta)\backslash C\big)}{\delta}\]
is a non-decreasing function for any compact, convex set $C \subseteq \R^d$ with non-empty interior. 
Therefore, for any $t<M_\phi$, it holds that
\[\Leb_d\Big(\textnormal{Nbd}(D_t, s/L)\backslash D_t\Big)\leq  \frac{2d}{Lr_d\eps_P} \cdot \Leb_d\Big(\textnormal{Nbd}\Big(D_t, \frac{sr_d\eps_P}{2d}\Big)\backslash D_t\Big)\]
since we have assumed $L\geq \frac{2d}{r_d\eps_P}$. 
We also have $D_t \subseteq D_{t + \log b_d}$, where $b_d \in(0,1]$ is the constant appearing in  Lemma~\ref{lem:ball}, and so
\[\Leb_d\Big(\textnormal{Nbd}\Big(D_t, \frac{sr_d\eps_P}{2d}\Big)\backslash D_t\Big) \leq \Leb_d\Big(\textnormal{Nbd}\Big(D_t, \frac{sr_d\eps_P}{2d}\Big)\Big) \\
\leq \Leb_d\Big(\textnormal{Nbd}\Bigl(D_{t + \log b_d}, \frac{sr_d\eps_P}{2d}\Big)\Big).\]
Recall from Lemma~\ref{lem:ball} that $D_{M_\phi + \log b_d}$ contains $ \mathbb{B}_d(\mu_P,r_d\eps_P)$. Therefore,
for any $t<M_\phi$, $D_{t+\log b_d}\supseteq D_{M_\phi + \log b_d}$ also contains this ball, and so 
\begin{align*}
  \textnormal{Nbd}\Big(D_{t + \log b_d}, \frac{sr_d\eps_P}{2d}\Big) &= D_{t + \log b_d} + \frac{s}{2d} \cdot  \mathbb{B}_d(\mu_P,r_d\eps_P)\\
                                                                          &\subseteq  D_{t + \log b_d} + \frac{s}{2d} \cdot D_{t+\log b_d} \\
  &= \biggl(1+ \frac{s}{2d} \biggr)  \cdot D_{t+\log b_d} ,\end{align*}
where for two sets $A,B\subseteq\R^d$, we write $A+B:=\{x+y:x\in A, y\in B\}$ to denote their Minkowski sum. Therefore,
\[\Leb_d\Big(\textnormal{Nbd}\Big(D_{t + \log b_d}, \frac{sr_d\eps_P}{2d}\Big) \Big) \leq \Leb_d(D_{t+\log b_d})\cdot  \left(1+ \frac{s}{2d} \right)^d \leq 
\Leb_d(D_{t+\log b_d}) \cdot e^{s/2}\]
for any $t<M_\phi$. Combining this with our work above, we obtain
\begin{equation}\label{eqn:volume_inflate}
\Leb_d\Big(\textnormal{Nbd}(D_t, s/L)\backslash D_t\Big)\leq
  \frac{2d}{Lr_d\eps_P} \cdot
\Leb_d(D_{t+\log b_d}) \cdot e^{s/2}\end{equation}
for any $t< M_\phi$.
Therefore,
\begin{align*}
\int_{-\infty}^{M_\phi}\int_{0}^{\infty}&e^{t-s}\cdot \Leb_d\Big(\textnormal{Nbd}(D_t, s/L)\backslash D_t\Big)\;\mathsf{d}s\;\mathsf{d}t\\
&\leq\int_{-\infty}^{M_\phi}\int_{0}^{\infty}e^{t-s}\cdot \frac{2d}{Lr_d\eps_P} \cdot
\Leb_d(D_{t+\log b_d}) \cdot e^{s/2}\;\mathsf{d}s\;\mathsf{d}t\\
&=\frac{2d}{Lr_d\eps_P} \cdot\left(\int_{-\infty}^{M_\phi}e^t\cdot 
\Leb_d(D_{t+\log b_d})\;\mathsf{d}t\right)\cdot \left(\int_{0}^{\infty} e^{-s}\cdot e^{s/2}\;\mathsf{d}s\right)\\
&=\frac{4d}{Lr_d\eps_P} \cdot \int_{-\infty}^{M_\phi}e^t\cdot 
\Leb_d(D_{t+\log b_d})\;\mathsf{d}t\\
&=\frac{4d}{Lb_dr_d\eps_P} \cdot \int_{-\infty}^{M_\phi}e^{t+\log b_d}\cdot 
\Leb_d(D_{t+\log b_d})\;\mathsf{d}t\\
&=\frac{4d}{Lb_dr_d\eps_P} \cdot \int_{-\infty}^{M_\phi + \log b_d}e^t\cdot 
\Leb_d(D_t)\;\mathsf{d}t\\
&\leq \frac{4d}{Lb_dr_d\eps_P} \cdot \int_{-\infty}^{M_\phi }e^t\cdot 
\Leb_d(D_t)\;\mathsf{d}t\\
&= \frac{4d}{Lb_dr_d\eps_P},
\end{align*}
where for the last step we again apply~\eqref{eqn:t_integral_no_inflate}. This completes the proof of Lemma~\ref{lem:Lip_maj}.

\subsubsection{Supporting lemmas for Lemma~\ref{lem:Lip_maj}}

\begin{lemma}\label{lem:sup}
For any $x\in\R^d$ and any $\phi\in \Phi_d$, there exists a Borel measurable map $x\mapsto y_x$
such that $y_x$ attains  $\sup_{y\in\R^d}\{\phi(y)-L\norm{x-y}\}$.
\end{lemma}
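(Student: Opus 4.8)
The plan is to split the statement into its two genuinely separate pieces: (i) for each fixed $x$, the supremum defining $\phi^L(x)$ is attained, and (ii) a maximiser can be chosen so that $x\mapsto y_x$ is Borel measurable. Part (i) is a routine compactness argument; part (ii) is a standard measurable selection argument, once (i) is upgraded to a locally uniform bound on where the maximisers sit. Write $g(x,y):=\phi(y)-L\norm{x-y}$, so that $\phi^L(x)=\sup_{y\in\R^d}g(x,y)$ and a valid choice of $y_x$ is exactly a maximiser of $g(x,\cdot)$. Recall from above that $\phi^L$ is finite-valued, concave and $L$-Lipschitz (hence continuous), and that there are constants $a\in\R$, $b>0$ with $\phi(y)\le a-b\norm{y}$ for all $y\in\R^d$.

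First I would prove pointwise existence together with a radius bound. Fix $x$ and pick $y_0$ with $\phi(y_0)>-\infty$ (possible since $\phi$ is proper). Then $g(x,\cdot)$ is upper semi-continuous (sum of the u.s.c.\ function $\phi$ and the continuous function $-L\norm{x-\cdot}$), satisfies $g(x,y)\le a-b\norm{y}\to-\infty$ as $\norm{y}\to\infty$, and is finite at $y_0$. Hence the superlevel set $\{y:g(x,y)\ge g(x,y_0)\}$ is nonempty, closed and bounded, so $g(x,\cdot)$ attains its maximum there, and therefore on all of $\R^d$; thus $\Gamma(x):=\argmax_{y}g(x,y)$ is nonempty, and it is compact, being the closed set $\{y:g(x,y)=\phi^L(x)\}$. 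Moreover, for $y\in\Gamma(x)$ we have $\phi^L(x)=g(x,y)\le a-b\norm{y}$, while $\phi^L(x)\ge g(x,y_0)\ge\phi(y_0)-L\norm{y_0}-L\norm{x}$; combining these gives $\norm{y}\le\rho(\norm{x})$ for every $y\in\Gamma(x)$, where $\rho(t):=\max\{1,\,b^{-1}(a-\phi(y_0)+L\norm{y_0}+Lt)\}$ is continuous and positive.

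Next I would extract the Borel selector. For $N\in\mathbb{N}$ let $K_N:=\{x:\norm{x}\le N\}$; by the radius bound, on $K_N$ the correspondence $x\mapsto\Gamma(x)$ takes values in the fixed compact ball $\mathbb{B}_d(\rho(N))$, and its graph is closed there since $g$ is jointly upper semi-continuous on $\R^d\times\R^d$ and $\phi^L$ is continuous. A compact-valued correspondence with closed graph and relatively compact range is upper hemicontinuous, hence weakly measurable, so the Kuratowski--Ryll-Nardzewski selection theorem \citep[see, e.g.,][]{dudley2002real} provides a Borel-measurable selector $x\mapsto y^{(N)}_x\in\Gamma(x)$ on $K_N$. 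Setting $y_x:=y^{(1)}_x$ on $K_1$ and $y_x:=y^{(N)}_x$ on $K_N\setminus K_{N-1}$ for $N\ge2$ yields a globally Borel-measurable map with $y_x\in\Gamma(x)$ for every $x$, i.e.\ $y_x$ attains $\sup_{y}\{\phi(y)-L\norm{x-y}\}$. (Alternatively, one can invoke the measurable maximum theorem directly, applied to $g$ together with the weakly measurable, compact-valued correspondence $x\mapsto\mathbb{B}_d(\rho(\norm{x}))$.)

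The pointwise part is routine, so I expect the only real content — and the only place where care is needed — to be the measurable selection. The feature that makes it go through is that upper semi-continuity of $\phi$ is exactly the right hypothesis: it makes $g$ jointly upper semi-continuous, hence the argmax graph closed, which is precisely what the selection theorems require, so merely upper (rather than full) semi-continuity of $\phi$ is no obstacle. If one preferred to avoid citing a selection theorem, one could instead use that $\Gamma(x)$ is compact \emph{and} convex (as $g(x,\cdot)$ is concave) and take $y_x$ to be its point of minimal norm; verifying Borel measurability of this explicit choice, however, amounts to essentially the same argument.
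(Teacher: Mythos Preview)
Your proof is correct and takes a genuinely different route from the paper's.

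The paper introduces the set $\mathcal{Y}=\{y:\phi(y)\ge\phi(y')-L\norm{y-y'}\text{ for all }y'\}$ on which $\phi$ is $L$-Lipschitz (hence continuous), restricts the optimisation to the compact-valued correspondence $S(x)=\mathcal{Y}\cap\mathbb{B}_d(x,\norm{x-x_\phi})$, and then invokes the measurable maximum theorem \citep[Theorem~18.19]{aliprantis2006infinite}, which requires the integrand to be Carath\'eodory (in particular, continuous in $y$). This forces the detour through $\mathcal{Y}$ and an additional verification that the supremum over $S(x)$ coincides with the supremum over all of $\R^d$, together with a somewhat involved check that $x\mapsto S(x)$ is weakly measurable. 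Your approach bypasses all of this: you work directly with the argmax correspondence $\Gamma(x)=\argmax_y g(x,y)$, observe that joint upper semi-continuity of $g$ and continuity of $\phi^L$ make the graph of $\Gamma$ closed, and then use the closed-graph theorem for correspondences (closed graph plus range in a fixed compact set implies upper hemicontinuity) followed by Kuratowski--Ryll-Nardzewski. The localisation to balls $K_N$ and the explicit radius bound $\rho(\norm{x})$ are exactly what is needed to make the range compact on each piece, and the gluing over the Borel partition $\{K_N\setminus K_{N-1}\}$ is unproblematic. The upshot is that your argument is shorter and avoids the artificial restriction to $\mathcal{Y}$, at the cost of citing KRN rather than a packaged measurable-maximum theorem; the paper's route, conversely, delegates more to a single black-box theorem but has to do more preparatory work to meet that theorem's continuity hypothesis.
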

\begin{proof}[Proof of Lemma~\ref{lem:sup}]
Let $M_\phi:= \sup_{x \in \R^d} \phi(x)$, 
and let $x_{\phi}\in\argmax_{x\in\R^d}\phi(x)$
(note that, by definition of $\Phi_d\ni \phi$, $M_\phi$ must be finite, and $x_\phi$ must exist).
Define
\[\mathcal{Y}= \left\{y\in\R^d: \phi(y) \geq \phi(y') - L\norm{y-y'}\textnormal{ for all }y'\in\R^d\right\}.\]
Note that $\mathcal{Y}$ is non-empty, since trivially $x_\phi\in\mathcal{Y}$.

Next define $h:\R^d \times \mathcal{Y} \rightarrow \R$
as $h(x,y) = \phi(y)-L\norm{x-y}$. 
For each $x\in\R^d$, define
\[S(x) = \mathcal{Y}\cap\mathbb{B}_d(x, \norm{x-x_\phi}).\]
Note that, for any $x$, we have $x_\phi\in S(x)$ by definition.

Now we will apply \citet[Theorem 18.19]{aliprantis2006infinite}, which guarantees 
the existence of a Borel measurable function $x\mapsto y_x\in S(x)$ such that, for each $x$,
\[y_x \in\argmax_{y \in S(x)}h(x,y),\]
as long as we verify the following conditions:
\begin{itemize}
\item $\R^d$ is a measurable space, and $\mathcal{Y}$ is a separable metrizable space. This holds trivially.
\item $h$ is a Carath{\'e}odory function (i.e., $x\mapsto h(x,y)$ is measurable for any $y\in\mathcal{Y}$,
and $y\mapsto h(x,y)$ is continuous for almost every $x\in\R^d$). It holds trivially that $x\mapsto h(x,y)$ 
is measurable. To check that $y\mapsto h(x,y)$  is continuous for any fixed $x$, it is sufficient to verify that $\phi$ is continuous on $\mathcal{Y}$. 
In fact, examining the definition of $\mathcal{Y}$, we can see that $\phi$ is $L$-Lipschitz on $\mathcal{Y}$ by definition, thus
ensuring continuity.
\item $S(x)$ is non-empty and compact for any $x\in\R^d$. We have already seen that $x_\phi\in S(x)$ for all $x$.
To check compactness, it is sufficient to verify that $\mathcal{Y}$ is closed, which follows immediately from the definition of~$\mathcal{Y}$
along with the fact that $\phi$ is upper semi-continuous (by definition of $\phi\in\Phi_d$).
\item In the terminology of \citet{aliprantis2006infinite}, the correspondence $\mathcal{X} \twoheadrightarrow \mathcal{Y}$, mapping $x\mapsto S(x)\subseteq\mathcal{Y}$, is weakly measurable, meaning that 
the set $X_A := \{x\in\R^d : S(x)\cap A \neq \emptyset\}$ is measurable for any open subset $A\subseteq\mathcal{Y}$. \citet[Lemma 18.2]{aliprantis2006infinite}
establishes that, since $\mathcal{Y}$ is metrizable, this is implied by the stronger condition that $X_A$ is measurable
for every {\em closed} subset $A\subseteq\mathcal{Y}$, so we will check this stronger condition.

Let  $A\subseteq\mathcal{Y}$ be a closed subset. Consider any $x,x_1,x_2,\ldots\in\R^d$ such that $x_i\in X_A$ for all $i\geq 1$ and such that $\lim_{i\rightarrow\infty} x_i = x$.
Let $R = \sup_i \norm{x_i-x_\phi}$, which is finite
since the sequence converges. This means that $S(x_i)\subseteq \mathbb{B}_d(x_\phi,2R)$ for all $i$.
For each $i$, $x_i\in X_A$ implies that $S(x_i)\cap A \neq \emptyset$, and so we can find some $y_i \in S(x_i)\cap A\subseteq \mathbb{B}_d(x_\phi,2R)$.
Therefore, we can find some convergent subsequence, i.e., $i_1,i_2,\ldots$ such that $\lim_{j\rightarrow\infty} y_{i_j}=y$ for some $y\in\R^d$.
By assumption, $A$ is a closed subset of $\mathcal{Y}$, and we have already shown that $\mathcal{Y}$ is a closed subset
of $\R^d$. Therefore, $A\subseteq\R^d$ is closed, and so we must have $y\in A$. Now we check that $y\in S(x)$.
We know that $y\in A\subseteq \mathcal{Y}$, and so we only need to check that $y\in\mathbb{B}_d(x,\norm{x-x_\phi})$.
This holds because, for each $j\geq 1$, $y_{i_j}\in S(x_{i_j})\subseteq \mathbb{B}_d(x_{i_j},\norm{x_{i_j}-x_\phi})$, and so
\[\norm{y-x} = \lim_{j\rightarrow\infty} \norm{y_{i_j}-x_{i_j}} \leq  \lim_{j\rightarrow\infty} \norm{x_{i_j}-x_\phi} = \norm{x-x_\phi}.\]
We have now seen that $y\in S(x)\cap A$, proving that $S(x)\cap A\neq \emptyset$ and so $x\in X_A$. 
Therefore, we have established that $X_A$ is closed, and is therefore measurable.
\end{itemize}

\noindent Finally we check that, for any $x$,
\[\sup_{y\in\R^d}\{\phi(y)-L\norm{x-y}\} = \sup_{y\in S(x)}\{\phi(y)-L\norm{x-y}\}.\]
First, for any $y\not\in \mathbb{B}_d(x, \norm{x-x_\phi})$, we have
$\norm{x-y} > \norm{x-x_\phi}$, and so since $\phi(y)\leq \phi(x_\phi)$ by definition of $x_\phi$, it holds that
\[\phi(y)-L\norm{x-y}  <  \phi(x_\phi)-L\norm{x-x_\phi}.\] 
Therefore,
\[\sup_{y\in\R^d}\{\phi(y)-L\norm{x-y}\} =\sup_{y\in \mathbb{B}_d(x, \norm{x-x_\phi})}\{\phi(y)-L\norm{x-y}\}.\]
Next, since $\phi$ is upper semi-continuous, the supremum on the right-hand side is attained, i.e., 
there exists some $y_1\in \mathbb{B}_d(x, \norm{x-x_\phi})$ such that
\[\phi(y_1) - L\norm{x-y_1} = \sup_{y\in \mathbb{B}_d(x, \norm{x-x_\phi})}\{\phi(y)-L\norm{x-y}\} = \sup_{y\in\R^d}\{\phi(y)-L\norm{x-y}\} .\]
Now we verify that $y_1\in\mathcal{Y}$. To see this, fix any $y'\in\R^d$.
Then
\[\phi(y') - L\norm{x-y'} \leq  \sup_{y\in\R^d}\{\phi(y)-L\norm{x-y}\} = \phi(y_1) - L\norm{x-y_1}\]
and so
\[\phi(y_1) \geq \phi(y') - L\norm{x-y'} +  L\norm{x-y_1} \geq \phi(y') - L\norm{y_1-y'}.\]
Since this holds for all $y'\in\R^d$, we have established that $y_1\in\mathcal{Y}$. Therefore, $y_1\in S(x)$, which verifies
$\sup_{y\in\R^d}\{\phi(y)-L\norm{x-y}\} = \sup_{y\in S(x)}\{\phi(y)-L\norm{x-y}\}$. 
\end{proof}

\begin{lemma}\label{lem:convex_inflate}
Let $C\subseteq\R^d$ be any compact, convex set with non-empty interior. 
Then 
\[\delta\mapsto \frac{\Leb_d\big(\textnormal{Nbd}(C,\delta)\backslash C\big)}{\delta}\]
is a non-decreasing function of $\delta>0$.
\end{lemma}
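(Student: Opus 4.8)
\emph{Proof proposal.} The plan is to reduce the statement to the convexity of the volume of the parallel body. First I would record the elementary identity $\textnormal{Nbd}(C,\delta) = C + \delta\mathbb{B}_d$ (a point lies within distance $\delta$ of $C$ precisely when it can be written as a point of $C$ plus a vector of norm at most $\delta$; the infimum defining $\textnormal{dist}(\cdot,C)$ is attained since $C$ is compact). Since $C\subseteq\textnormal{Nbd}(C,\delta)$, writing $V(\delta):=\Leb_d(C+\delta\mathbb{B}_d)\in(0,\infty)$ we have
\[\Leb_d\bigl(\textnormal{Nbd}(C,\delta)\backslash C\bigr) = V(\delta)-V(0) =: g(\delta), \qquad g(0)=0.\]
It then suffices to prove that $g$ is convex on $[0,\infty)$. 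Indeed, granting this, for any $0<\delta_1<\delta_2$ we may write $\delta_1 = (\delta_1/\delta_2)\,\delta_2 + (1-\delta_1/\delta_2)\cdot 0$ as a convex combination, and convexity together with $g(0)=0$ gives $g(\delta_1)\leq(\delta_1/\delta_2)\,g(\delta_2)$, i.e.\ $g(\delta_1)/\delta_1\leq g(\delta_2)/\delta_2$, which is exactly the claimed monotonicity.

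It remains to show that $V(\delta)=\Leb_d(C+\delta\mathbb{B}_d)$ is a convex function of $\delta\geq 0$. The cleanest route is to invoke the Steiner formula for convex bodies (see e.g.\ Schneider's monograph on the Brunn--Minkowski theory, or Gruber's book), which expresses $V(\delta)=\sum_{j=0}^d\binom{d}{j}W_j(C)\,\delta^j$ with all quermassintegrals $W_j(C)\geq 0$; a polynomial in $\delta$ with non-negative coefficients is convex on $[0,\infty)$. Alternatively, for a more self-contained argument one can apply the coarea formula to the $1$-Lipschitz convex function $x\mapsto\textnormal{dist}(x,C)$, whose gradient has norm $1$ almost everywhere on $\R^d\backslash C$, to get $g(\delta)=\int_0^\delta\mathcal{H}^{d-1}\bigl(\partial(C+t\mathbb{B}_d)\bigr)\,\mathsf{d}t$; since the surface area of a convex body is monotone under inclusion (e.g.\ via Cauchy's projection formula, using $C+t\mathbb{B}_d\subseteq C+t'\mathbb{B}_d$ for $t\leq t'$), the integrand is non-decreasing in $t$, so $g$ is convex. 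Either way the proof concludes.

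The only substantive input is the convexity of $\delta\mapsto\Leb_d(C+\delta\mathbb{B}_d)$, and that is where the ``main obstacle'' lies, though it is a classical fact rather than a new difficulty; the rest is the routine neighbourhood-equals-Minkowski-sum identity and the one-line ratio argument. I would flag one tempting dead end: Brunn--Minkowski yields \emph{concavity} of $\delta\mapsto\Leb_d(C+\delta\mathbb{B}_d)^{1/d}$, which is the wrong direction and does not imply convexity of $V$ itself, so one genuinely needs the polynomial (Steiner) structure of the parallel-body volume, not merely convexity of volume under Minkowski averaging. Finally I would remark that the hypothesis of non-empty interior is not actually required for the monotonicity conclusion—the argument is unchanged with $V(0)=\Leb_d(C)$ possibly equal to zero—so it may be retained simply for consistency with the way the lemma is used elsewhere.
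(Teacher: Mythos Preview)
Your proposal is correct and takes essentially the same approach as the paper: both arguments rest on Steiner's formula expressing $\Leb_d(C+\delta\mathbb{B}_d)$ as a polynomial in $\delta$ with non-negative coefficients. The paper's version is marginally more direct---it simply divides by $\delta$ and observes that $\sum_{k=1}^d V_{d-k}(C)\Leb_k(\mathbb{B}_k)\delta^{k-1}$ is non-decreasing---whereas you first deduce convexity of $V$ and then use the $g(0)=0$ ratio argument; your coarea alternative and the remark on Brunn--Minkowski are extras not in the paper but are sound.
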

\begin{proof}[Proof of Lemma~\ref{lem:convex_inflate}]
This result follows immediately from Steiner's formula \citep[Chapter 4]{schneider2014convex}, 
which states that for all $\eps\geq 0$,
\[\Leb_d\big(\textnormal{Nbd}(C,\eps)\big) = \Leb_d(C) + \sum_{k=1}^d V_{d-k}(C) \cdot \Leb_k(\mathbb{B}_k) \cdot \eps^k,\]
where  $V_{d-k}(C)\geq 0$ is the $(d-k)$-th intrinsic volume of $C$.
Rearranging, we have
\[\frac{\Leb_d\big(\textnormal{Nbd}(C,\eps)\backslash C\big)}{\eps} = \sum_{k=1}^d V_{d-k}(C) \cdot \Leb_k(\mathbb{B}_k) \cdot \eps^{k-1},\]
which is a non-decreasing function of $\epsilon$.
\end{proof}

\subsection{Proof of Lemma~\ref{lem:1d}}
First we consider the bounded case. Suppose that $P$ and $Q$ are both supported on $[-R,R]$ for some $R>0$.
Write $\Delta = \Delta_{\textnormal{CDF}}(P,Q)$ and $\eps = \min\{\eps_P,\eps_Q\}$. Let $r_1,b_1\in(0,1]$
be the universal constants defined in Lemma~\ref{lem:ball} (for dimension $d=1$), and fix any $L \geq \frac{4}{r_1\eps}$.
By Corollary~\ref{cor:Lip_maj}, we have
\[\dH^2\bigl(\psi^*(P),\psi^*(Q)\bigr) \leq \frac{16}{Lb_1r_1\eps}+ \left(\Ep{P}{\phi^L_P(X)} - \Ep{Q}{\phi^L_P(X)}\right) +  \left(\Ep{Q}{\phi^L_Q(X)}  - \Ep{P}{\phi^L_Q(X)}\right) .\]
Now we bound the two differences. 
For any $\phi\in\Phi_d$ define $M_{\phi} = \sup_{x\in\R^d}\phi(x)$ (note that $M_{\phi}$ is finite by definition of $\Phi_d$).
We note that $M_{\phi_P} = M_{\phi^L_P}$ by definition of $\phi^L_P$, and that $\phi^L_P(X)\geq M_{\phi_P}-2LR$ with probability $1$ under either $P$ or $Q$, since the distributions are supported on $[-R,R]$ and so
$\phi_P$ must attain its maximum somewhere in this range.
We then have
\begin{multline*}\Ep{P}{\phi^L_P(X)} - \Ep{Q}{\phi^L_P(X)}
=\Ep{Q}{M_{\phi_P} - \phi^L_P(X)} -   \Ep{P}{M_{\phi_P} - \phi^L_P(X)}\\
=\int_{0}^{2LR}  \left(\Pp{Q}{M_{\phi_P} - \phi^L_P(X)\geq t } - \Pp{P}{M_{\phi_P} - \phi^L_P(X)\geq t } \right)\;\mathsf{d}t.\end{multline*}
It is trivial to verify that
\[ \left|\sqrt{\Pp{P}{X\not\in C}} - \sqrt{\Pp{Q}{X\not\in C}} \right|\leq \Delta\sqrt{2}\]
for any convex set (i.e., an interval) $C\subseteq \R$, by definition of $\Delta$ (this follows from the fact that
$|\sqrt{a+c} - \sqrt{b+d}|^2 \leq |\sqrt{a} - \sqrt{b}|^2 + |\sqrt{c} - \sqrt{d}|^2$ for any $a,b,c,d\geq 0$).
Since $\phi^L_P$ is concave, the set $\{x:M_{\phi_P}-\phi^L_P(x)< t\}$ is convex, and so
\[\Pp{Q}{M_{\phi_P} - \phi^L_P(X)\geq t }  \leq \left(\sqrt{\Pp{P}{M_{\phi_P} - \phi^L_P(X)\geq t }} +\Delta\sqrt{2}\right)^2\]
and so, since it also holds that $\phi^L_P \geq \phi_P$ pointwise, we have
\[\Pp{Q}{M_{\phi_P} - \phi^L_P(X)\geq t } - \Pp{P}{M_{\phi_P} - \phi^L_P(X)\geq t }
\leq \Delta\sqrt{8}\cdot\sqrt{\Pp{P}{M_{\phi_P} - \phi_P(X)\geq t }}+2\Delta^2.\]
Lemma~\ref{lem:phi_with_radius_bound} below will establish that, for $t\geq \frac{8R}{r_1\eps}$, we have
$\Pp{P}{M_{\phi_P} - \phi_P(X)\geq t } \leq  \frac{32}{b_1r_1\eps} \cdot \frac{R}{t^2}$. Applying this bound,
 we have
\begin{align*}&\Ep{P}{\phi^L_P(X)} - \Ep{Q}{\phi^L_P(X)} \\
&\leq \int_{0}^{2LR} \left(\Delta\sqrt{8}\cdot\sqrt{\Pp{P}{M_{\phi_P} - \phi_P(X)\geq t }}+2\Delta^2 \right)\;\mathsf{d}t\\
&= \Delta\sqrt{8}\int_{0}^{2LR}\sqrt{\Pp{P}{M_{\phi_P} - \phi_P(X)\geq t }}\;\mathsf{d}t + 4LR\Delta^2\\
&=\Delta\sqrt{8}\biggl(\int_{0}^{\frac{8R}{r_1\eps}} \! \sqrt{\Pp{P}{M_{\phi_P} - \phi_P(X)\geq t }}\;\mathsf{d}t + 
\int_{\frac{8R}{r_1\eps}}^{2LR} \! \sqrt{\Pp{P}{M_{\phi_P} - \phi_P(X)\geq t }}\;\mathsf{d}t\biggr) +4LR\Delta^2\\
&\leq\Delta\sqrt{8}\sqrt{\frac{8R}{r_1\eps}}\cdot\biggl(\int_{0}^{\frac{8R}{r_1\eps}}\Pp{P}{M_{\phi_P} - \phi_P(X)\geq t }\;\mathsf{d}t \biggr)^{1/2} \! + 
\Delta\sqrt{8}\int_{\frac{8R}{r_1\eps}}^{2LR} \! \sqrt{ \frac{32}{b_1r_1\eps} \cdot \frac{R}{t^2}}\;\mathsf{d}t +4LR\Delta^2\\
&\leq \Delta\sqrt{8}\sqrt{\frac{8R}{r_1\eps}}\cdot\sqrt{\Ep{P}{M_{\phi_P}-\phi_P(X)}} + \Delta\sqrt{8}\sqrt{ \frac{32R}{b_1r_1\eps} } \log\left(Lr_1\eps/4\right) +4LR\Delta^2\\
&\leq \Delta\sqrt{8}\sqrt{\frac{8Rh_1}{r_1\eps}} + \Delta\sqrt{8}\sqrt{ \frac{32R}{b_1r_1\eps} } \log\left(Lr_1\eps/4\right)+ 4LR\Delta^2,\end{align*}
where the last step applies Lemma~\ref{lem:Ephi_const} below,
which will establish that $\Ep{P}{\phi(X)} \geq M_\phi - h_1$ for a universal constant $h_1$.
By symmetry the same bound holds for $\Ep{Q}{\phi^L_Q(X)} - \Ep{P}{\phi^L_Q(X)}$. Combining all our work so far, then,
\[\dH^2\bigl(\psi^*(P),\psi^*(Q)\bigr) \leq \frac{16}{Lb_1r_1\eps}+ 2\biggl\{ \Delta\sqrt{8}\biggl( \sqrt{\frac{8Rh_1}{r_1\eps}} + \sqrt{ \frac{32R}{b_1r_1\eps} } \log\left(Lr_1\eps/4\right)\biggr) + 4LR\Delta^2\biggr\}.\]
Next we split into cases. If $ \frac{1}{\Delta\sqrt{R\eps}} \geq \frac{4}{r_1\eps}$, then setting $L= \frac{1}{\Delta\sqrt{R\eps}}$ we apply this bound to obtain
\[\dH^2\bigl(\psi^*(P),\psi^*(Q)\bigr) \leq C' \Delta\sqrt{R/\eps} \max\biggl\{1,\log\Bigl(\frac{1}{ \Delta\sqrt{R/\eps}}\Bigr)\biggr\},\]
for a universal constant $C'$. Since $\eps\leq 2R$ by definition, and $\Delta \leq 1$, we can relax this to 
\[\dH^2\bigl(\psi^*(P),\psi^*(Q)\bigr) \leq C' \Delta\sqrt{R/\eps} \log(e/\Delta).\]
If instead $\frac{1}{\Delta\sqrt{R\eps}} < \frac{4}{r_1\eps}$, then 
\[\dH^2\bigl(\psi^*(P),\psi^*(Q)\bigr) \leq 2 \leq \frac{8}{r_1} \Delta\sqrt{R/\eps}.\]
Therefore, combining both cases, we have
\begin{equation}\label{eqn:bounded_support}\dH^2\bigl(\psi^*(P),\psi^*(Q)\bigr) \leq C''  \Delta\sqrt{\frac{R}{\min\{\eps_P,\eps_Q\}}}\log(e/\Delta)\end{equation}
for a universal constant $C'' =\max\{C',8/r_1\}$.  Next we will need to relate $\min\{\eps_P,\eps_Q\}$ with $\max\{\eps_P,\eps_Q\}$. 
Without loss of generality, suppose that $\mu_P\geq \mu_Q$. We then have
\begin{align*}
\frac{\eps_Q}{2}
&=\frac{1}{2}\Ep{{Q}}{|X-\mu_{Q}|}
=\Ep{{Q}}{(X-\mu_{Q})_+}\\
&\geq \Ep{{Q}}{(X-\mu_{P})_+}
=\int_{\mu_{P}}^{R}\Pp{{Q}}{X>t}\;\mathsf{d}t\\
&\geq \int_{\mu_{P}}^{R}\Pp{{P}}{X>t} - 2\Delta\sqrt{\Pp{{P}}{X>t}} \;\mathsf{d}t\\
&\geq \int_{\mu_{P}}^{R}\Pp{{P}}{X>t} \;\mathsf{d}t - 2\Delta\sqrt{R-\mu_{P}}\sqrt{\int_{\mu_{P}}^R\Pp{{P}}{X>t} \;\mathsf{d}t}\\
  &\geq\Ep{{P}}{(X-\mu_{P})_+} - 2\Delta\sqrt{2R}\sqrt{\Ep{{P}}{(X-\mu_{P})_+}} \\
&=\frac{\eps_P}{2} - 2\Delta\sqrt{R\cdot \eps_P},
\end{align*}
where the final inequality follows because $|\mu_{P}|\leq R$.  We can similarly calculate
\[\frac{\eps_P}{2} = \frac{1}{2}\Ep{{P}}{|X-\mu_{P}|} = \Ep{{P}}{(X-\mu_{P})_-} \geq \frac{\eps_Q}{2} - 2\Delta\sqrt{R\cdot \eps_Q}.\]
Combining these two bounds, then,
\begin{equation}\label{eqn:epsP_epsQ_bounded_support}\max\{\eps_P,\eps_Q\} = \min\{\eps_P,\eps_Q\} + |\eps_P-\eps_Q|\leq \min\{\eps_P,\eps_Q\}  + 4\Delta_{\textnormal{CDF}}(P,Q)\cdot\sqrt{R\cdot \max\{\eps_P,\eps_Q\}}.\end{equation}

Now we  work with the general case, where $P,Q$ may not have bounded support. Fix any $R>0$.
For any $x\in\R$ define
\begin{equation}\label{eqn:x_R_def}[x]_R := \begin{cases}-R, & x<-R,\\ x, & |x|\leq R, \\ R, & x>R,\end{cases}\end{equation}
the truncation of $x$ to the range $[-R,R]$. Let $[P]_R$ denote the distribution of $[X]_R$ when $X\sim P$, and same for $[Q]_R$. Lemma~\ref{lem:dw_bound} below calculates that $\dw(P,[P]_R) \leq  \frac{M_q^q}{R^{q-1}}$.
Applying Theorem~\ref{thm:main} to compare the distributions $P$ and $[P]_R$, then, we have
\[\dH^2\bigl(\psi^*(P),\psi^*([P]_R)\bigr) \leq C_1^2\sqrt{\frac{\dw(P,[P]_R)}{\max\{\eps_P,\eps_{[P]_R}\}}} \leq C_1^2\sqrt{\frac{M_q^q}{\eps_{[P]_R} R^{q-1}}},\]
and the same bound holds with $Q$ in place of $P$.
Therefore, by the triangle inequality,
\begin{align}
\notag\dH^2\bigl(\psi^*(P),&\psi^*(Q)\bigr) \\
\notag&\leq \Bigl\{\dH\bigl(\psi^*(P),\psi^*([P]_R)\bigr)+\dH\bigl(\psi^*(Q),\psi^*([Q]_R)\bigr)+\dH\bigl(\psi^*([P]_R),\psi^*([Q]_R)\bigr)\Bigr\}^2\\
\notag&\leq 3\dH^2\bigl(\psi^*(P),\psi^*([P]_R)\bigr) + 3\dH^2\bigl(\psi^*(Q),\psi^*([Q]_R)\bigr)+3\dH^2\bigl(\psi^*([P]_R),\psi^*([Q]_R)\bigr)\\
\label{eqn:unbounded_to_bounded}&\leq 6C_1^2\sqrt{\frac{M_q^q}{\min\{\eps_{[P]_R},\eps_{[Q]_R}\} R^{q-1}}}+3\dH^2\bigl(\psi^*([P]_R),\psi^*([Q]_R)\bigr).\end{align}
We now need to apply the bound~\eqref{eqn:bounded_support} to the bounded distributions $[P]_R$ and $[Q]_R$,
in order to bound this last term. Combining~\eqref{eqn:bounded_support} with~\eqref{eqn:unbounded_to_bounded}, we obtain
\begin{multline*}\dH^2\bigl(\psi^*(P),\psi^*(Q)\bigr)
\leq 6C_1^2\sqrt{\frac{M_q^q}{\min\{\eps_{[P]_R},\eps_{[Q]_R}\} R^{q-1}}}\\{}+3C'' \Delta_{\textnormal{CDF}}([P]_R,[Q]_R)\sqrt{\frac{R}{\min\{\eps_{[P]_R},\eps_{[Q]_R}\}}}\log\bigl(e/\Delta_{\textnormal{CDF}}([P]_R,[Q]_R)\bigr).\end{multline*}
Now fix 
\[R=M_q\Big\{\Delta_{\textnormal{CDF}}([P]_R,[Q]_R)\log(e/\Delta_{\textnormal{CDF}}([P]_R,[Q]_R))\Big\}^{-2/q}.\] This yields
\begin{align*}\dH^2\bigl(\psi^*(P)&,\psi^*(Q)\bigr) \\
&\leq C_*'\sqrt{\frac{M_q}{\min\{\eps_{[P]_R},\eps_{[Q]_R}\}}}\cdot \bigg\{\Delta_{\textnormal{CDF}}([P]_R,[Q]_R)\log\biggl(\frac{e}{\Delta_{\textnormal{CDF}}([P]_R,[Q]_R)}\biggr)\bigg\}^{1-1/q},\end{align*}
when the universal constant $C_*' > 0$ is chosen appropriately. Next, it holds trivially that $\Delta_{\textnormal{CDF}}([P]_R,[Q]_R)\leq\Delta_{\textnormal{CDF}}(P,Q) $, and since $t\mapsto t\log(e/t)$ is increasing on $t\in(0,1]$, we therefore have
\[\dH^2\bigl(\psi^*(P),\psi^*(Q)\bigr)
\leq C_*'\sqrt{\frac{M_q}{\min\{\eps_{[P]_R},\eps_{[Q]_R}\}}}\cdot \Big\{\Delta_{\textnormal{CDF}}(P,Q)\log\bigl(e/\Delta_{\textnormal{CDF}}(P,Q)\bigr)\Big\}^{1-1/q}.\]

Finally, we need to lower bound $\eps_{[P]_R}$ and $\eps_{[Q]_R}$. 
First we relate $\min\{\eps_{[P]_R},\eps_{[Q]_R}\}$ to $\max\{\eps_{[P]_R},\eps_{[Q]_R}\}$. 
Applying~\eqref{eqn:epsP_epsQ_bounded_support} from above, along with the fact that $\Delta_{\textnormal{CDF}}([P]_R,[Q]_R)\leq\Delta_{\textnormal{CDF}}(P,Q) $,
we  have
\[\max\{\eps_{[P]_R},\eps_{[Q]_R}\} \leq \min\{\eps_{[P]_R},\eps_{[Q]_R}\} + 4\Delta_{\textnormal{CDF}}(P,Q)\sqrt{R\cdot \max\{\eps_{[P]_R},\eps_{[Q]_R}\}}.\]
If $8\Delta_{\textnormal{CDF}}(P,Q)\sqrt{R} \leq \sqrt{\max\{\eps_{[P]_R},\eps_{[Q]_R}\}}$, then this proves that 
\[\max\{\eps_{[P]_R},\eps_{[Q]_R}\} \leq 2\min\{\eps_{[P]_R},\eps_{[Q]_R}\} \] and so
\[\dH^2\bigl(\psi^*(P),\psi^*(Q)\bigr)
\leq C_*'\sqrt{\frac{2M_q}{\max\{\eps_{[P]_R},\eps_{[Q]_R}\}}}\cdot \Big\{\Delta_{\textnormal{CDF}}(P,Q)\log\bigl(e/\Delta_{\textnormal{CDF}}(P,Q)\bigr)\Big\}^{1-1/q}.\]
If instead $8\Delta_{\textnormal{CDF}}(P,Q)\sqrt{R} > \sqrt{\max\{\eps_{[P]_R},\eps_{[Q]_R}\}}$, then we have
\[\dH^2\bigl(\psi^*(P),\psi^*(Q)\bigr) \leq 2 \leq \frac{16\Delta_{\textnormal{CDF}}(P,Q)\sqrt{R}}{\sqrt{\max\{\eps_{[P]_R},\eps_{[Q]_R}\}}}.\]
Plugging in the definition of $R$ and combining both cases, we obtain
\[\dH^2\bigl(\psi^*(P),\psi^*(Q)\bigr)
\leq C_*''\sqrt{\frac{M_q}{\max\{\eps_{[P]_R},\eps_{[Q]_R}\}}}\cdot \Big(\Delta_{\textnormal{CDF}}(P,Q)\log(e/\Delta_{\textnormal{CDF}}(P,Q))\Big)^{1-1/q}\]
for an appropriately chosen universal constant $C_*''$.
The last step is to relate $\max\{\eps_{[P]_R},\eps_{[Q]_R}\}$ to $\max\{\eps_P,\eps_Q\}$.
 Applying Proposition~\ref{prop:eps_P} together
with the bound on $\dw(P,[P]_R)$ from Lemma~\ref{lem:dw_bound}, we have
\[\eps_{[P]_R}\geq \eps_P - 2\dw(P,[P]_R) \geq\eps_P - 2\cdot \frac{M_q^q}{R^{q-1}},\]
and the same bound holds for $Q$ in place of $P$.
If $\frac{2M_q^q}{ R^{q-1}} \leq \frac{\max\{\eps_P,\eps_Q\}}{2}$,
then \[\max\{\eps_{[P]_R},\eps_{[Q]_R}\}\geq \frac{\max\{\eps_P,\eps_Q\}}{2},\] and so we obtain
\[\dH^2\bigl(\psi^*(P),\psi^*(Q)\bigr)
\leq C_*''\sqrt{\frac{2M_q}{\max\{\eps_P,\eps_Q\}}}\cdot \Big\{\Delta_{\textnormal{CDF}}(P,Q)\log\bigl(e/\Delta_{\textnormal{CDF}}(P,Q)\bigr)\Big\}^{1-1/q}.\]
If instead $\frac{2M_q^q}{ R^{q-1}} > \frac{\max\{\eps_P,\eps_Q\}}{2}$, then it trivially holds that
\[\dH^2\bigl(\psi^*(P),\psi^*(Q)\bigr)\leq 2  
\leq 2\sqrt{\frac{4M_q^q}{\max\{\eps_P,\eps_Q\} R^{q-1}}}.\]
Plugging in the definition of $R$, and combining the two cases, we obtain
\[\dH^2\bigl(\psi^*(P),\psi^*(Q)\bigr)
\leq C_*\sqrt{\frac{M_q}{\max\{\eps_P,\eps_Q\}}}\cdot \Big\{\Delta_{\textnormal{CDF}}(P,Q)\log\bigl(e/\Delta_{\textnormal{CDF}}(P,Q)\bigr)\Big\}^{1-1/q}\]
for appropriately chosen universal constant $C_*$, which completes the proof of Lemma~\ref{lem:1d}.
\subsubsection{Supporting lemmas for Lemma~\ref{lem:1d}}

\begin{lemma}\label{lem:phi_with_radius_bound}
Let $P\in\Pcal_d$ and let $\phi = \phi^*(P)$. Let $M_{\phi} := \sup_{x\in\R^d}\phi(x)$ and let $x_{\phi}\in\argmax_{x\in\R^d}\phi(x)$ 
(which is guaranteed to exist by definition of $\Phi_d\ni \phi$).
Fix any $R>0$ and $t\geq \frac{8dR}{r_d\eps_P}$, where $r_d\in(0,1]$ is taken from Lemma~\ref{lem:ball}. Then
\[\Pp{P}{\phi(X) \leq M_{\phi} - t \textnormal{ and }\norm{X-x_{\phi}}\leq 2R}\leq \frac{32d}{b_dr_d\eps_P} \cdot \frac{R}{t^2},\]
where $b_d\in(0,1]$ is taken from Lemma~\ref{lem:ball}.
\end{lemma}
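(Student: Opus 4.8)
The plan is to apply the Lipschitz‑majorization machinery of Lemma~\ref{lem:Lip_maj} with the \emph{specific} choice $L := t/(4R)$. Observe first that the hypothesis $t \geq 8dR/(r_d\eps_P)$ is exactly the statement $L \geq 2d/(r_d\eps_P)$, so every conclusion about $\phi^L$ drawn in the proof of Lemma~\ref{lem:Lip_maj} — in particular the volume bound~\eqref{eqn:Lip_ball_inflate} — is available for this $L$, where $\phi = \phi^*(P)$.

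Next I would record two easy properties of $\phi^L$. Since $\phi^L \geq \phi$ pointwise and $\phi^L(x) = \sup_{y}\{\phi(y) - L\norm{x-y}\} \leq \sup_y \phi(y) = M_\phi$, the supremum of $\phi^L$ equals $M_\phi$ and is attained at $x_\phi$; and since $\phi^L$ is $L$-Lipschitz, $M_\phi - \phi^L(x) \leq L\norm{x - x_\phi}$ for every $x$, so on $\mathbb{B}_d(x_\phi, 2R)$ we have $\phi^L(x) \geq M_\phi - 2LR = M_\phi - t/2$. The crux is then to dominate the indicator of the event in question by a multiple of $\phi^L - \phi$: if $\phi(x) \leq M_\phi - t$ and $\norm{x - x_\phi} \leq 2R$, then $\phi^L(x) - \phi(x) \geq (M_\phi - t/2) - (M_\phi - t) = t/2$, and since $\phi^L - \phi \geq 0$ everywhere this yields $\One{\phi(x) \leq M_\phi - t,\ \norm{x-x_\phi}\leq 2R} \leq \frac{2}{t}\bigl(\phi^L(x) - \phi(x)\bigr)$ for all $x \in \R^d$. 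Taking $\Ep{P}{\cdot}$ of both sides bounds the probability in question by $\frac{2}{t}\Ep{P}{\phi^L(X) - \phi(X)}$.

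It then remains to bound $\Ep{P}{\phi^L(X) - \phi(X)}$, for which I would use the optimality of $\phi = \phi^*(P)$ as a maximizer of $\ell(\cdot,P)$ over $\Phi_d$. Since $\int_{\R^d} e^{\phi} = 1$ and $\tilde\phi^L$ is a normalised log‑density, $\ell(\phi,P) = \Ep{P}{\phi(X)}$ and $\ell(\tilde\phi^L,P) = \Ep{P}{\tilde\phi^L(X)}$; optimality gives $\Ep{P}{\tilde\phi^L(X)} \leq \Ep{P}{\phi(X)}$, and since $\tilde\phi^L = \phi^L - \log\int_{\R^d} e^{\phi^L}$ this rearranges to
\[
\Ep{P}{\phi^L(X) - \phi(X)} \leq \log\!\int_{\R^d} e^{\phi^L(x)}\,\mathsf{d}x \leq \log\!\Bigl(1 + \frac{4d}{Lb_dr_d\eps_P}\Bigr) \leq \frac{4d}{Lb_dr_d\eps_P},
\]
where the middle inequality is~\eqref{eqn:Lip_ball_inflate} and the last uses $\log(1+u)\leq u$. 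Substituting $L = t/(4R)$ gives the bound $\frac{2}{t}\cdot\frac{4d}{Lb_dr_d\eps_P} = \frac{32dR}{b_dr_d\eps_P\, t^2}$, as claimed. Along the way one checks that the expectations are finite: $\Ep{P}{\phi^L(X)}$ is finite because $\phi^L$ is Lipschitz and $P$ has a finite first moment, while $\Ep{P}{\phi(X)} \in (-\infty,M_\phi]$ because $P \in \Pcal_d$.

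Given that~\eqref{eqn:Lip_ball_inflate} is already in hand from the proof of Lemma~\ref{lem:Lip_maj}, I do not anticipate any genuine obstacle here: the only real ingredient is the calibrated choice $L = t/(4R)$, which simultaneously forces $\phi^L \geq M_\phi - t/2$ on $\mathbb{B}_d(x_\phi,2R)$ and keeps $L$ above the threshold $2d/(r_d\eps_P)$ needed to invoke Lemma~\ref{lem:Lip_maj}; the rest is bookkeeping.
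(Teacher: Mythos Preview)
Your proposal is correct and follows essentially the same route as the paper: both set $L=t/(4R)$, observe that $\phi^L\geq M_\phi-t/2$ on $\mathbb{B}_d(x_\phi,2R)$ so that the event forces $\phi^L-\phi\geq t/2$, and then bound $\Ep{P}{\phi^L(X)-\phi(X)}$ via optimality of $\phi$ together with~\eqref{eqn:Lip_ball_inflate}. The only cosmetic difference is that you pass through $\tilde\phi^L$ and the inequality $\log(1+u)\leq u$, whereas the paper compares $\ell(\phi,P)$ directly with $\ell(\phi^L,P)$; the resulting bounds coincide.
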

\begin{proof}[Proof of Lemma~\ref{lem:phi_with_radius_bound}]
First, for any $x$ with $\norm{x-x_{\phi}}\leq 2R$, 
\[\phi^{t/4R}(x) = \sup_{y\in\R^d}\biggl\{\phi(y) - \frac{t}{4R}\norm{y-x}\biggr\} \geq \phi(x_{\phi}) - \frac{t}{4R}\norm{x-x_\phi} \geq M_\phi - \frac{t}{2}.\]
Hence, if $\phi(x) \leq M_\phi - t$ and  $\norm{x-x_{\phi}}\leq 2R$, then
\[\phi^{t/4R}(x) - \phi(x) \geq \frac{t}{2}.\]
Moreover, by definition of $\phi = \phi^*(P)$, since $\phi^{t/4R}\in\Phi_d$, it holds that
\begin{align*}\Ep{P}{\phi(X)} = \ell(\phi,P) \geq \ell(\phi^{t/4R},P) &= \Ep{P}{\phi^{t/4R}(X)}  -\int_{\R^d}e^{\phi^{t/4R}(x)}\;\mathsf{d}x + 1\\ &\geq \Ep{P}{\phi^{t/4R}(X)}  - \frac{4d}{\frac{t}{4R}b_dr_d\eps_P} ,\end{align*}
where the last step holds by~\eqref{eqn:Lip_ball_inflate} as calculated in the proof of Lemma~\ref{lem:Lip_maj}, noting that $\frac{t}{4R} \geq \frac{2d}{r_d\eps_P}$. We deduce that
\begin{multline*}
\Pp{P}{\phi(X) \leq M_\phi - t \textnormal{ and }\norm{X-x_\phi}\leq 2R}
\leq \Pp{P}{\phi^{t/4R}(X) - \phi(X) \geq \frac{t}{2}}\\
\leq \frac{\Ep{P}{\phi^{t/4R}(X) - \phi(X)}}{t/2}
\leq \frac{\frac{4d}{\frac{t}{4R}b_dr_d\eps_P}}{t/2}
= \frac{32d}{b_dr_d\eps_P} \cdot \frac{R}{t^2},
\end{multline*}
as required.
\end{proof}

\begin{lemma}\label{lem:Ephi_const}
Fix any $P\in\Pcal_d$ and let $\phi = \phi^*(P)$. Then
\[\Ep{P}{\phi(X)} \geq M_\phi - h_d,\]
where $M_\phi=\sup_{x\in\R^d}\phi(x)$ and where $h_d \geq 0$ depends only on $d$.
\end{lemma}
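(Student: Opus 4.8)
The plan is to reduce to the isotropic case and then bound $M_\phi$ and $\Ep{P}{\phi(X)}$ separately by constants depending only on~$d$. Write $\Sigma = \Cov_f(X)$, which is strictly positive definite since $f = \psi^*(P)$, and let $Q \in \Pcal_d$ be the distribution of $\Sigma^{-1/2}(X - \mu_P)$ for $X \sim P$. Since log-concave projection commutes with invertible affine maps \citep[Remark~2.4]{dumbgen2011approximation}, the density $g := \psi^*(Q)$ satisfies $g(y) = f(\Sigma^{1/2} y + \mu_P)\det^{1/2}(\Sigma)$, so $g$ is isotropic and log-concave. Writing $\phi_Q := \log g = \phi^*(Q)$, a change of variables gives $\sup_{y} \phi_Q(y) = M_\phi + \tfrac12 \log\det\Sigma$ and $\Ep{Q}{\phi_Q(Y)} = \Ep{P}{\phi(X)} + \tfrac12\log\det\Sigma$, and subtracting shows that the quantity $\Ep{P}{\phi(X)} - M_\phi$ is unchanged under this reduction. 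Hence it suffices to prove the lemma when $f$ itself is isotropic, which I assume from now on.

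Under isotropy, the first step is an upper bound on $M_\phi$. Exactly as in the proof of Lemma~\ref{lem:ball} (using \citet[Theorem~5.14(a),(b)]{lovasz2007geometry}), we have $\inf_{\norm{x} \le 1/9} f(x) \ge b_d \sup_{x'} f(x')$; integrating over $\mathbb{B}_d(1/9)$ then gives $1 = \int f \ge b_d e^{M_\phi} \Leb_d(\mathbb{B}_d)\, 9^{-d}$, so that $M_\phi \le \log\bigl(9^d / (b_d \Leb_d(\mathbb{B}_d))\bigr)$, a constant depending only on $d$.

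The second step is a matching lower bound on $\Ep{P}{\phi(X)}$. Since $\int e^{\phi} = 1$, we have $\Ep{P}{\phi(X)} = \ell(\phi, P) = \max_{\tilde\phi \in \Phi_d} \ell(\tilde\phi, P)$, so it is enough to exhibit one competitor. I would take $\tilde\phi = \log h_0$, where $h_0(x) = \frac{d^d}{(d-1)!\,S_{d-1}}\exp(-d\norm{x})$ is the normalized density with exponential radial profile and unit mean radius (here $S_{d-1}$ is the surface area of $\mathbb{S}_{d-1}$, as in the proof of Lemma~\ref{lem:isotropic}); this $\tilde\phi$ lies in $\Phi_d$ and satisfies $\int e^{\tilde\phi} = 1$, so $\Ep{P}{\phi(X)} \ge \ell(\tilde\phi, P) = \log\bigl(\tfrac{d^d}{(d-1)!\,S_{d-1}}\bigr) - d\,\Ep{P}{\norm{X}}$. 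Because $f = \psi^*(P)$ is isotropic, Lemma~\ref{lem:isotropic} gives $\Ep{P}{\norm{X}} \le a_d$, and hence $\Ep{P}{\phi(X)}$ is bounded below by a constant depending only on $d$.

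Combining the last two steps gives $\Ep{P}{\phi(X)} - M_\phi \ge -h_d$ in the isotropic case for a suitable $h_d$ depending only on $d$, and by the affine reduction this bound then holds in general. I do not expect a genuine obstacle here; the only care needed is the bookkeeping of the affine reduction (verifying $Q \in \Pcal_d$, the commutation property, and the change-of-variables identities for $M_\phi$ and for $\Ep{P}{\phi(X)}$) and checking that $h_0$ is correctly normalized so that the competitor bound combines with Lemma~\ref{lem:isotropic} to yield an explicit dimensional constant.
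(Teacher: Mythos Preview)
Your proof is correct but follows a different route from the paper's. Both arguments begin with the same affine reduction to the isotropic case, where the gap $M_\phi - \Ep{P}{\phi(X)}$ is invariant. After that, the two diverge. The paper bounds the gap as a single quantity: it first transfers the expectation from $P$ to $f$ via \citet[Remark~2.3]{dumbgen2011approximation} (using that $x\mapsto \phi(x)+\tfrac12\bigl(M_\phi-\phi(x)\bigr)$ is concave and coercive), obtaining $\Ep{P}{M_\phi-\phi(X)}\leq \Ep{f}{M_\phi-\phi(X)}$; it then recognizes the right-hand side as $M_{\bar\phi}$ plus the differential entropy of the isotropic density $\bar\phi$, bounds the entropy by the Gaussian value $\tfrac{d}{2}\log(2\pi e)$, and bounds $M_{\bar\phi}$ by \citet[Theorem~5.14(e)]{lovasz2007geometry}. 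Your approach instead bounds the two terms separately---$M_\phi$ from above by integrating the Lov\'asz--Vempala lower bound over $\mathbb{B}_d(1/9)$, and $\Ep{P}{\phi(X)}$ from below by comparing against the explicit competitor $h_0$ together with the bound $\Ep{P}{\norm{X}}\leq a_d$ from Lemma~\ref{lem:isotropic}. Your argument is slightly more elementary in that it avoids both Remark~2.3 and the entropy comparison, relying only on optimality of $\phi$ and on Lemma~\ref{lem:isotropic} (whose proof uses the same competitor idea); the paper's argument is more self-contained at this point and yields a somewhat cleaner explicit constant $h_d = m_d + \tfrac{d}{2}\log(2\pi e)$.
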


\begin{proof}[Proof of Lemma~\ref{lem:Ephi_const}]
Write $\Ep{\phi}{\cdot}$ to denote the expectation with respect to the 
distribution with log-density $\phi$. Let $\mu_\phi := \Ep{\phi}{X}$ be the mean 
and $\Sigma := \Ep{\phi}{(X-\mu_\phi)(X-\mu_\phi)^\top}$ the covariance of this distribution.
Let $\bar{\phi}$ denote the log-density of the isotropic, log-concave random vector $\Sigma^{-1/2}(X-\mu_\phi)$, where $X$ has log-density $\phi$.  Let $M_{\bar\phi} := \sup_{x\in\R^d}\bar\phi(x)$.

Since $x\mapsto \phi(x) + \frac{1}{2}\bigl\{M_\phi - \phi(x)\bigr\}$ is concave and coercive, it holds by  \citet[Remark 2.3]{dumbgen2011approximation} that
\[\Ep{P}{M_\phi -\phi(X)} \leq \Ep{\phi}{M_\phi -\phi(X)}.\]
Next, we can trivially verify that 
\[\Ep{\phi}{M_\phi -\phi(X)} = \Ep{\bar\phi}{M_{\bar\phi} -\bar\phi(X)}\]
since the log-densities $\phi$ and $\bar\phi$ are related via the  linear transformation on random variables above.
Furthermore,
\[\Ep{\bar\phi}{M_{\bar\phi} -\bar\phi(X)} = M_{\bar\phi} - \int_{\R^d} e^{\bar\phi(y) } \cdot\bar\phi(y)\;\mathsf{d}y \leq M_{\bar\phi} + \frac{d}{2}\log(2\pi e),
\]
where the last step holds since $\bar\phi$ is the log-density of an isotropic distribution on $\R^d$, and so its entropy 
is bounded by that of the standard $d$-dimensional Gaussian \citep[e.g.][Theorem~9.6.5]{cover1991elements}. Finally, by  \citet[Theorem 5.14(e)]{lovasz2007geometry}, $M_{\bar\phi}\leq m_d$ where $m_d \in \mathbb{R}$ depends only on the dimension $d$.
Therefore, combining everything, 
\[\Ep{P}{M_\phi -\phi(X)} \leq  m_d +  \frac{d}{2}\log(2\pi e),\]
which proves the desired bound.
\end{proof}

\begin{lemma}\label{lem:dw_bound}
Let $P\in\Pcal_1$ satisfy $\Ep{P}{|X|^q}^{1/q}\leq M_q$, for some $q>1$.
Let $[P]_R$ be the distribution of $[X]_R$ when $X\sim P$ (where the truncation $[X]_R$ is defined as in~\eqref{eqn:x_R_def}).
Then
\[\dw(P,[P]_R) \leq  \frac{M_q^q}{R^{q-1}}.\]
\end{lemma}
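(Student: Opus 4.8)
\textbf{Proof proposal for Lemma~\ref{lem:dw_bound}.}
The plan is to exhibit an explicit coupling of $P$ and $[P]_R$ and then bound the expected transport cost. First I would draw $X\sim P$ and set $Y:=[X]_R$, so that marginally $Y\sim[P]_R$; this is a valid coupling of the two distributions, and hence
\[\dw(P,[P]_R)\leq\Ep{P}{|X-[X]_R|}.\]
Next I would observe that, directly from the definition~\eqref{eqn:x_R_def} of the truncation, $|X-[X]_R|=(|X|-R)_+$: if $|X|\leq R$ the difference is $0$, and if $|X|>R$ the difference is $|X|-R$.

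The second step is a pointwise comparison. On the event $\{|X|>R\}$ we have $|X|/R>1$, so $(|X|-R)_+\leq|X|=|X|\cdot 1\leq|X|\cdot(|X|/R)^{q-1}=|X|^q/R^{q-1}$ (using $q>1$), while on $\{|X|\leq R\}$ the left-hand side is zero; in either case $(|X|-R)_+\leq|X|^q/R^{q-1}$. Taking expectations under $P$ and using the moment hypothesis $\Ep{P}{|X|^q}\leq M_q^q$ gives
\[\dw(P,[P]_R)\leq\Ep{P}{(|X|-R)_+}\leq\frac{\Ep{P}{|X|^q}}{R^{q-1}}\leq\frac{M_q^q}{R^{q-1}},\]
which is the claimed bound.

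I do not anticipate any genuine obstacle here: the only points requiring (minor) care are checking that $X\mapsto[X]_R$ really does push $P$ forward to $[P]_R$ (immediate from the definition of $[P]_R$), and the elementary inequality $(|X|-R)_+\leq|X|^q/R^{q-1}$, which is just the trivial estimate $t\leq t^q/R^{q-1}$ for $t\geq R$ together with vanishing of the left side for $t<R$. Everything else is a one-line application of the definition of $\dw$ via a suboptimal coupling.
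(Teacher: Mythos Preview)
Your proof is correct and matches the paper's argument essentially line for line: the paper also uses the coupling $(X,[X]_R)$, identifies $|X-[X]_R|=(|X|-R)_+$, and then applies the pointwise bound $(|X|-R)_+\leq |X|^q/R^{q-1}$ before taking expectations.
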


\begin{proof}[Proof of Lemma~\ref{lem:dw_bound}]
Drawing $X\sim P$, note that $(X,[X]_R)$ is a coupling of the distributions $P$ and $[P]_R$. Hence 
\[\dw(P,[P]_R) \leq \Ep{P}{|X-[X]_R|} = \Ep{P}{\big(|X|-R\big)_+} \leq \Ep{P}{\frac{|X|^q}{R^{q-1}}} \leq \frac{M_q^q}{R^{q-1}},\]
as required.
\end{proof}

\subsection{Proof of Lemma~\ref{lem:cdf}}
Write $\widehat{U}_n(t) = \frac{1}{n}\sum_{i=1}^n\One{U_i\leq t}$. First we calculate
\[\Delta_{\textnormal{CDF}}\bigl(\widehat{U}_n,\textnormal{Unif}[0,1]\bigr)
=\max\left\{ \underbrace{\sup_{t\in[0,1]}\left|\sqrt{1-\widehat{U}_n(t)}-\sqrt{1-t}\right| }_{=\Delta_0} \ \ ,\ \ \underbrace{ \sup_{t\in[0,1]}\left|\sqrt{\widehat{U}_n(t)}-\sqrt{t}\right|}_{=:\Delta_1}\right\},\]
by obseving that
\[\sup_{t\in[0,1]}\left|\sqrt{\frac{1}{n}\sum_{i=1}^n\One{U_i<t}}-\sqrt{t}\right| = \sup_{t\in[0,1]}\left|\sqrt{\frac{1}{n}\sum_{i=1}^n\One{U_i\leq t}}-\sqrt{t}\right|\]
(i.e., the supremum is unchanged by replacing $<$ with $\leq$).
We can further write
\[\Delta_1 = \max\left\{\underbrace{\sup_{t\in[0,\frac{\log n}{n}]}\left|\sqrt{\widehat{U}_n(t)}-\sqrt{t}\right|}_{=:\Delta_{1,0}} \ \ , \ \
\underbrace{\sup_{t\in[\frac{\log n}{n},1-\frac{\log n}{n}]}\left|\sqrt{\widehat{U}_n(t)}-\sqrt{t}\right|}_{=:\Delta_{1,1}} \ \ , \ \
\underbrace{\sup_{t\in[1-\frac{\log n}{n},1]}\left|\sqrt{\widehat{U}_n(t)}-\sqrt{t}\right|}_{=:\Delta_{1,2}}\right\}.\]
We have
\[\Delta_{1,0} = \sup_{t\in[0,\frac{\log n}{n}]}\left|\sqrt{\widehat{U}_n(t)}-\sqrt{t}\right|
\leq \sqrt{\frac{\log n}{n}} + \sqrt{\widehat{U}_n\left(\frac{\log n}{n}\right)}\leq 2\sqrt{\frac{\log n}{n}}  + \Delta_{1,1},\]
and 
\[\Delta_{1,2} = \sup_{t\in[1-\frac{\log n}{n},1]}\left|\sqrt{\widehat{U}_n(t)}-\sqrt{t}\right| \leq 
\sqrt{\frac{\log n}{n}} +\left( 1-  \sqrt{\widehat{U}_n\left(1 - \frac{\log n}{n}\right)}\right)\leq 2\sqrt{\frac{\log n}{n}}  + \Delta_{1,1}.\]
Furthermore,
\[\Delta_{1,1} = \sup_{t\in[\frac{\log n}{n},1-\frac{\log n}{n}]}\left|\sqrt{\widehat{U}_n(t)}-\sqrt{t}\right|
=\sup_{t\in[\frac{\log n}{n},1-\frac{\log n}{n}]}\frac{|{\widehat{U}_n(t)}-{t}|}{\sqrt{\widehat{U}_n(t)}+\sqrt{t}} \leq 
\sup_{t\in[\frac{\log n}{n},1-\frac{\log n}{n}]}\frac{|{\widehat{U}_n(t)}-{t}|}{\sqrt{t}}.\]
Combining these calculations, we have
\[\Delta_1 \leq 2\sqrt{\frac{\log n}{n}}  + \sup_{t\in[\frac{\log n}{n},1-\frac{\log n}{n}]}\frac{|{\widehat{U}_n(t)}-{t}|}{\sqrt{t}}.\]
Similarly we can calculate
\[\Delta_0 \leq 2\sqrt{\frac{\log n}{n}}  + \sup_{t\in[\frac{\log n}{n},1-\frac{\log n}{n}]}\frac{|{\widehat{U}_n(t)}-{t}|}{\sqrt{1-t}},\]
and so we have
\begin{multline*}\Delta_{\textnormal{CDF}}\bigl(\widehat{U}_n,\textnormal{Unif}[0,1]\bigr) \leq  2\sqrt{\frac{\log n}{n}}  +
\sup_{t\in[\frac{\log n}{n},1-\frac{\log n}{n}]}\frac{|{\widehat{U}_n(t)}-{t}|}{\sqrt{\min\{t,1-t\}}}\\
=2\sqrt{\frac{\log n}{n}}  +
\max\left\{\sup_{t\in[\frac{\log n}{n},\frac{1}{2}]}\frac{|{\widehat{U}_n(t)}-{t}|}{\sqrt{t}},
\sup_{t\in[\frac{1}{2},1-\frac{\log n}{n}]}\frac{|{\widehat{U}_n(t)}-{t}|}{\sqrt{1-t}}\right\}.\end{multline*}

Next, \citet[Proposition 11.1.1 (part (10)) + Inequality 11.2.1]{shorack2009empirical} (applied with $q(t)=\sqrt{t}$, with $a=\frac{\log n}{n}$, and with $b=\delta=\frac{1}{2}$) establishes that, for any $\lambda>0$,
\[\PP{\sup_{t\in[\frac{\log n}{n},\frac{1}{2}]}\frac{|{\widehat{U}_n(t)}-{t}|}{\sqrt{t}}\geq \frac{\lambda}{\sqrt{n}}}\leq 12 \int_{\frac{\log n}{n}}^{1/2} \frac{1}{t} \cdot \exp\left\{-\frac{\lambda^2}{8\left(1+\frac{\lambda}{3\sqrt{\log n}}\right)}\right\}\;\mathsf{d}t,\]
as long as $n$ satisfies $\frac{\log n}{n}\leq\frac{1}{4}$ (which holds for $n>8$; for $n\leq 8$, by taking $c'\geq 2$ we can ensure that the lemma's claim is trivial, since $\Delta_{\textnormal{CDF}}\bigl(\widehat{U}_n,\textnormal{Unif}[0,1]\bigr)\leq 1$ deterministically).
Furthermore, clearly we see that $\sup_{t\in[\frac{\log n}{n},\frac{1}{2}]}\frac{|{\widehat{U}_n(t)}-{t}|}{\sqrt{t}}$ and $\sup_{t\in[\frac{1}{2},1-\frac{\log n}{n}]}\frac{|{\widehat{U}_n(t)}-{t}|}{\sqrt{1-t}}$ are equal in distribution. Therefore, we have
\[\PP{\Delta_{\textnormal{CDF}}\bigl(\widehat{U}_n,\textnormal{Unif}[0,1]\bigr) \geq 2\sqrt{\frac{\log n}{n}}+\frac{\lambda}{\sqrt{n}}} \leq 24\log\left(\frac{n}{2\log n}\right)\cdot \exp\left\{-\frac{\lambda^2}{8\left(1+\frac{\lambda}{3\sqrt{\log n}}\right)}\right\}\]
for any $\lambda>0$. 
Taking $\lambda = 5(c+2)\sqrt{\log n}$, we can calculate $\exp\left\{-\frac{\lambda^2}{8\left(1+\frac{\lambda}{3\sqrt{\log n}}\right)}\right\} \leq
\exp\{-(c+2)\log n\} = n^{-(c+2)}$,
and so we have
\[\PP{\Delta_{\textnormal{CDF}}\bigl(\widehat{U}_n,\textnormal{Unif}[0,1]\bigr) \geq 2\sqrt{\frac{\log n}{n}}+5(c+2)\sqrt{\frac{\log n}{n}}}
\leq 24\log\left(\frac{n}{2\log n}\right)\cdot  n^{-(c+2)} \leq n^{-c}\]
where the last step holds since we have assumed that $n>8$. This proves the lemma with $c' = 5c+12$.

\subsection{Proof of Lemma~\ref{lem:prob_above_below_mu}}
We have
\begin{align*}
\eps_P &= \Ep{P}{|X-\mu_P|} \\
&=  2\Ep{P}{(X-\mu_P)_+} \\
&\leq 2\Ep{P}{|X-\mu_P| \cdot \One{X>\mu_P}} \\
&\leq 2\Ep{P}{|X-\mu_P|^q}^{1/q}\Ep{P}{\One{X>\mu_P}^{\frac{q}{q-1}}}^{\frac{q-1}{q}}\\
&\leq 2(\Ep{P}{|X|^q}^{1/q} + (|\mu_P|^q)^{1/q}) \cdot \Pp{P}{X>\mu_P}^{\frac{q-1}{q}}\\
&\leq 4M_q \cdot \Pp{P}{X>\mu_P}^{\frac{q-1}{q}}.\end{align*}
Therefore,
\[\Pp{P}{X>\mu_P} \geq \left(\frac{\eps_P}{4M_q}\right)^{\frac{q}{q-1}}.\]
Similarly, the same bound holds for $\Pp{P}{X<\mu_P}$.

\subsection{Proofs of lower bounds (Theorems~\ref{thm:lowerbd} and~\ref{thm:lowerbd_empirical})}
We begin with some preliminary calculations that we will use for the constructions for both theorems.  Fix any $0<\rho_0<\rho_1$ and any $\beta\in\bigl(0,\rho_0/\rho_1\bigr]$. Let $P$ be the mixture distribution drawing 
\begin{equation}\label{eqn:lowerbd_construction_distribution}
X\sim \begin{cases}\textnormal{Unif}\bigl(\mathbb{S}_{d-1}(\rho_0)\bigr),& \textnormal{ with probability $1-\beta$,}\\
\textnormal{Unif}\bigl(\mathbb{S}_{d-1}(\rho_1)\bigr),&\textnormal{ with probability $\beta$.}\end{cases}\end{equation}
Defining
\begin{equation}\label{eqn:def_s_d}s_d= \EE{|V_1|} \textnormal{ for $V = (V_1,\ldots,V_d) \sim\textnormal{Unif}(\mathbb{S}_{d-1})$},\end{equation} 
we can calculate 
\[\eps_P = (1-\beta)\rho_0 \cdot s_d + \beta\rho_1\cdot s_d \geq s_d\rho_0.\]
We will apply Lemma~\ref{lem:phi_with_radius_bound} to this distribution $P$ and the log-density $\phi=\phi^*(P)$ of its log-concave projection.
Observe that $\phi$ is spherically symmetric around 0, and is constant over $\norm{x}\leq \rho_0$---in particular, this means that $\phi(x)=M_\phi$ for all
$\norm{x}\leq \rho_0$, where $M_\phi = \sup_{x\in\R^d}\phi(x)$ as before.
Next, let  $t_* \geq 0$ be the value of $M_\phi - \phi(x)$ for points $x$ with $\norm{x}=\rho_1$ (since $\phi$ is spherically symmetric, this is well defined).
We now split into cases. If $t_*\geq \frac{8d\rho_1}{r_ds_d\rho_0}$, 
then applying Lemma~\ref{lem:phi_with_radius_bound} with $R=\rho_1/2$, $x_\phi=0$, and $t=t_*$, we obtain
\[\beta\leq \Pp{P}{\phi(X) \leq M_\phi - t_* \textnormal{ and }\norm{X}\leq \rho_1}\leq \frac{16d}{b_dr_ds_d\rho_0} \cdot \frac{\rho_1}{t_*^2},\]
which proves that
\[t_*\leq\sqrt{\frac{16d}{b_dr_ds_d}\cdot \frac{\rho_1}{\rho_0\beta}}.\]
If this case does not hold, then we instead have  
$t_*< \frac{8d\rho_1}{r_ds_d\rho_0}$, so combining the two cases,
\[t_*\leq\max\left\{\sqrt{\frac{16d}{b_dr_ds_d}\cdot \frac{\rho_1}{\rho_0\beta}},\frac{8d}{r_ds_d}\cdot \frac{\rho_1}{\rho_0}\right\}
\leq\max\left\{\sqrt{\frac{16d}{b_dr_ds_d}},\frac{8d}{r_ds_d}\right\}\cdot\sqrt{ \frac{\rho_1}{\rho_0\beta}},\]
where the last step comes from our assumption on $\beta$.
Therefore
\[\phi(x) \geq \phi(0) - \max\left\{\sqrt{\frac{16d}{b_dr_ds_d}},\frac{8d}{r_ds_d}\right\}\cdot\sqrt{ \frac{\rho_1}{\rho_0\beta}}\]
for $\norm{x} = \rho_1$ while
\[\phi(x) = \phi(0)\]
for $\norm{x}\leq\rho_0$. By concavity of $\phi$, then,
\[\phi(x) \geq \phi(0) - \max\left\{\sqrt{\frac{16d}{b_dr_ds_d}},\frac{8d}{r_ds_d}\right\} \]
for all $x$ with
$\norm{x}\leq \rho_0 + (\rho_1-\rho_0) \cdot \sqrt{\frac{\rho_0\beta}{\rho_1}}$.
Therefore, for any density $f$ supported on $\mathbb{B}_d(\rho_0)$, it holds that
\begin{align*}
\dH^2\bigl(f,\psi^*(P)\bigr)
& \geq \int_{\R^d} e^{\phi(0) - \max\left\{\sqrt{\frac{16d}{b_dr_ds_d}},\frac{8d}{r_ds_d}\right\} }\cdot\One{\rho_0 < \norm{x} < \rho_0 + (\rho_1-\rho_0) \cdot \sqrt{\frac{\rho_0\beta}{\rho_1}}}\;\mathsf{d}x\\
&=e^{\phi(0) - \max\left\{\sqrt{\frac{16d}{b_dr_ds_d}},\frac{8d}{r_ds_d}\right\} }\cdot \Leb_d\Big(\mathbb{B}_d\big(\rho_0 + (\rho_1-\rho_0) \cdot \sqrt{\rho_0\beta/\rho_1}\big) \backslash \mathbb{B}_d(\rho_0)\Big)\\
&\geq e^{\phi(0) - \max\left\{\sqrt{\frac{16d}{b_dr_ds_d}},\frac{8d}{r_ds_d}\right\} }\cdot \rho_0^{d-1} \cdot (\rho_1-\rho_0)\cdot \sqrt{\frac{\rho_0\beta}{\rho_1}}\cdot  S_{d-1},
\end{align*}
where as before $S_{d-1}$ denotes the surface area of $\mathbb{S}_{d-1}$. Finally, we need to place a lower bound on $\phi(0)$. By Corollary~\ref{cor:lambda_min}, we know that the covariance matrix $\Sigma$ of the 
distribution with log-density $\phi$ has operator norm bounded as 
\[\norm{\Sigma}_{{\mathrm{op}}} \leq 16\big((1-\beta)\rho_0 + \beta\rho_1\big)^2.\]
Furthermore, $\tilde\phi(x) = \frac{1}{2}\log\det(\Sigma)+ \phi(\Sigma^{1/2}x)$
is an isotropic concave log-density, and so $\tilde\phi(0)\geq c'_d$ where $c'_d>0$ depends only on $d$, by \citet[Theorem 5.14(d)]{lovasz2007geometry}.
Therefore, 
\[\phi(0)\geq c'_d - \frac{d}{2}\log(16) - d\log\big((1-\beta)\rho_0 + \beta\rho_1\big).\]We conclude that
\begin{equation}\label{eqn:lowerbd_construction_dH_step}\dH^2\bigl(f,\psi^*(P)\bigr) \geq c''_d \cdot  \rho_0^{d-1} \cdot (\rho_1-\rho_0)\cdot \sqrt{\frac{\rho_0\beta}{\rho_1}} \cdot \big((1-\beta)\rho_0 + \beta\rho_1\big)^{-d},\end{equation}
where $c''_d$ depends only on $d$.

\subsubsection{Completing the proof of Theorem~\ref{thm:lowerbd}}
To prove Theorem~\ref{thm:lowerbd}, let $P$ be the distribution constructed in~\eqref{eqn:lowerbd_construction_distribution} with 
\[\rho_0 = \eps/s_d, \ \rho_1 = 2\eps/s_d, \ \beta = \min\left\{\frac{s_d\delta}{\eps},\frac{1}{2}\right\},\]
where $s_d$ is defined as in~\eqref{eqn:def_s_d}. Let
\[Q = \textnormal{Unif}(\mathbb{S}_{d-1}\bigl(\rho_0)\bigr).\]
Clearly $\eps_P \geq \eps_Q = s_d\rho_0 = \eps$, and $\dw(P,Q) = \beta(\rho_1-\rho_0) \leq \delta$, thus satisfying the conditions of the theorem. 
Since $Q$ is supported on $\mathbb{B}_d(\rho_0)$, $\psi^*(Q)$ is also supported on this ball.
Then applying our calculation~\eqref{eqn:lowerbd_construction_dH_step}, and plugging in our choices of $\rho_0,\rho_1,\beta$, 
after simplifying we have
\[\dH^2\bigl(\psi^*(P),\psi^*(Q)\bigr)\geq c''_d \cdot \frac{2^d}{3^d} \cdot \sqrt{ \min\left\{\frac{s_d\delta}{2\eps},\frac{1}{4}\right\}}.\]
This completes the proof of the theorem, when $c_d$ is chosen appropriately.

\subsubsection{Completing the proof of Theorem~\ref{thm:lowerbd_empirical}}
The first term in the lower bound, i.e., $\sup_{P\in\Pcal_d:\Ep{P}{\norm{X}^q}\leq 1,\, \eps_P\geq \eps^*_d} \EE{\dH^2\bigl(\psi^*(\Pemp),\psi^*(P)\bigr)}\geq c_d n^{-\frac{2}{d+1}}$, 
holds by \citet[Theorem 1]{kim2016global}, which establishes this as the minimax rate (for $d\geq 2$) over distributions $P$ that are log-concave
(we can verify that the distribution $P$ constructed in their proof satisfies the conditions $\Ep{P}{\norm{X}^q}\leq 1$, $\eps_P\geq \eps^*_d$, for appropriately chosen $\eps^*_d$).  If instead $d=1$, then the first term cannot be the minimum.

Next, to prove the second term in the lower bound, we consider a mixture model. Let $P$ be the distribution constructed in~\eqref{eqn:lowerbd_construction_distribution} with 
\[\rho_0 = \frac{1}{2}, \ \rho_1 = n^{1/q}, \ \beta = \frac{1}{2n}.\]
Then clearly $\Ep{P}{\norm{X}^q}\leq 1$, and $\eps_P\geq \frac{1}{2}s_d$, so $\eps_P\geq \eps^*_d$ for an appropriately chosen $\eps^*_d$.
Now, with probability at least $1/2$, the observations $X_1,\dots,X_n$ are all drawn from the first component of the mixture model, i.e., $\psi^*(\Pemp)$
is supported on $\mathbb{B}_d(1/2)$. On this event, applying~\eqref{eqn:lowerbd_construction_dH_step} and plugging in our choices of $\rho_0,\rho_1,\beta$, 
after simplifying we have
\[\dH^2\bigl(\psi^*(\Pemp),\psi^*(P)\bigr) \geq c'''_d \cdot n^{-\frac{1}{2}+\frac{1}{2q}},\]
where $c'''_d$ depends only on $d$. This establishes the second term in the lower bound claimed in Theorem~\ref{thm:lowerbd_empirical}, and 
thus completes the proof of the theorem.

\section*{Acknowledgements}
The authors thank the anonymous reviewers and Oliver Feng for helpful comments.  R.F.B.~was 
supported by the National Science Foundation via grant DMS--1654076
and by an Alfred P.~Sloan fellowship. R.J.S.~was 
 supported by EPSRC grants EP/P031447/1 and EP/N031938/1.

\bibliographystyle{plainnat}
\bibliography{bib}

\end{document}